\DeclarePairedDelimiter{\floor}{\lfloor}{\rfloor}
\newtheorem{thm}{Theorem}[section]
\newtheorem{prop}[thm]{Proposition}
\newtheorem{cor}[thm]{Corollary}
\newtheorem{note}[thm]{Note}
\newtheorem{question}[thm]{Question}
\newtheorem{ex}[thm]{Example}
\newtheorem{rem}[thm]{Remark}
\newcommand{\R}{\mathbb{R}}
\newcommand{\C}{\mathbb{C}}
\newcommand{\Z}{\mathbb{Z}}
\newcommand{\Q}{\mathbb{Q}}
\newcommand{\notimplies}{%
  \mathrel{{\ooalign{\hidewidth$\not\phantom{=}$\hidewidth\cr$\implies$}}}}
\journal{Journal of Algebra}
\begin{document}

\begin{frontmatter}

%% Title, authors and addresses

\title{Atoms in Quasilocal Integral Domains}

%% use the tnoteref command within \title for footnotes;
%% use the tnotetext command for the associated footnote;
%% use the fnref command within \author or \address for footnotes;
%% use the fntext command for the associated footnote;
%% use the corref command within \author for corresponding author footnotes;
%% use the cortext command for the associated footnote;
%% use the ead command for the email address,
%% and the form \ead[url] for the home page:
%%
%% \title{Title\tnoteref{label1}}
%% \tnotetext[label1]{}
%% \author{Name\corref{cor1}\fnref{label2}}
%% \ead{email address}
%% \ead[url]{home page}
%% \fntext[label2]{}
%% \cortext[cor1]{}
%% \address{Address\fnref{label3}}
%% \fntext[label3]{}

%% use optional labels to link authors explicitly to addresses:
%% \author[label1,label2]{<author name>}
%% \address[label1]{<address>}
%% \address[label2]{<address>}

\author{D.D. Anderson}
\ead{dan-anderson@uiowa.edu}

\author{K. Bombardier\corref{cor1}}
\ead{kevin-bombardier@uiowa.edu}

\cortext[cor1]{Corresponding author}
\address{Department of Mathematics, The University of Iowa, Iowa City, IA, 52242, USA}

\begin{abstract}
%% Text of abstract
Let $(R,M)$ be a quasilocal integral domain.  We investigate the set of irreducible elements (atoms) of $R$.  Special attention is given to the set of atoms in $M \backslash M^2$ and to the existence of atoms in $M^2$.  While our main interest is in local Cohen-Kaplansky (CK) domains (atomic integral domains with only finitely many non-associate atoms), we endeavor to obtain results in the greatest generality possible.  In contradiction to a statement of Cohen and Kaplansky, we construct a local CK domain with precisely eight nonassociate atoms having an atom in $M^2$.
\end{abstract}

\begin{keyword}
quasilocal domain \sep irreducible element \sep atom \sep atomic domain \sep Cohen Kaplansky domain
%% keywords here, in the form: keyword \sep keyword

%% MSC codes here, in the form: \MSC code \sep code
%% or \MSC[2008] code \sep code (2000 is the default)

\end{keyword}

\end{frontmatter}

%% main text
\section{Introduction}
\label{S:1}

Let $R$ be a (commutative) integral domain.  A nonzero nonunit $x \in R$ is \emph{irreducible}, or an \emph{atom}, if $x = ab$ implies $a$ or $b$ is a unit and $R$ is \emph{atomic} if each nonzero nonunit of $R$ is a finite product of atoms.  An atomic domain with only finitely many nonassociate atoms is called a \emph{Cohen-Kaplansky} (\emph{CK}) \emph{domain}.  (While a field is an atomic domain, even a CK domain, to avoid trivialities, we assume throughout that $R$ is not a field.)  While the purpose of this article is to study local CK domains and their atoms, in Section 2 we begin by investigating atoms in quasilocal domains that need not even be atomic.  While we focus on quasilocal domains, we should point out that the study of atoms or atomicity cannot generally be reduced to the quasilocal case.  Indeed, the ring of integer-valued polynomials is a two-dimensional Pr\"{u}fer BFD (and hence is atomic), but has a localization at a maximal ideal that is not atomic \cite[Example 2.7(b)]{AAZ}.  Conversely, if $R$ is a Bezout almost Dedekind domain that is not a PID (take $R = D(X)$ where $D$ is your favorite non-Dedekind almost Dedekind domain), then $R$ is not atomic, but each localization of $R$ is a DVR and hence atomic.  However, for CK domains we can effectively reduce to the local case, see Theorem \ref{Lattice}.  As usual two elements $a$ and $b$ of a domain $R$ are \emph{associates}, denoted $a \sim b$, if $b = ua$ for some unit $u \in R$.

The setup for Section 2 is a not necessarily atomic quasilocal domain $(R,M)$, usually with $M \neq M^2$.  (We reserve the term \enquote{local} for a Noetherian quasilocal domain.)  Set $\overline{R} = R/M$.  We begin by remarking that if $M^{\beta} = 0$ for some ordinal $\beta$, then $R$ satisfies ACCP (Theorem \ref{BFDgen}).  If $x \in M \backslash M^2$, $x$ is certainly an atom.  Special attention is given to the set of atoms contained in $M \backslash M^2$ and to the existence of atoms in $M^2$.  We say that $M^n$ is (\emph{weakly}) \emph{universal} if $M^n \subseteq Rx$ for each atom $x \in R$ ($x \in M \backslash M^2$).  We show that if there are exactly $n$ nonassociate atoms (in $M \backslash M^2$), then $M^{n-1} (M^n)$ is (weakly) universal (Theorem \ref{Weakly}). Suppose that $M \neq M^2$.  Let $\{x_{\alpha}\}_{\alpha \in \Lambda} \subseteq M \backslash M^2$ be a complete set of representatives of the one-dimensional $\overline{R}$-subspaces of $M / M^2$.  Then $\{x_{\alpha}\}_{\alpha \in \Lambda}$ is a set of nonassociate atoms of $R$ lying in $M \backslash M^2$ (thus we have a lower bound for the number of nonassociate atoms in $M \backslash M^2$) and $M^2$ is universal if and only if $\{x_{\alpha}\}_{\alpha \in \Lambda}$ is a complete set of nonassociate atoms of $R$ (lying in $M \backslash M^2$) (Theorem \ref{QLdoms}).  We show that $M^2$ is universal if and only if $[M:M] = \{x \in K | xM \subseteq M\}$ ($K$ the quotient field of $R$) is a quasilocal domain with principal maximal ideal $M$ (Theorem \ref{QLdoms}).  For $M^n$ universal $(n \geq 2)$, we give an upper bound for the number of nonassociate atoms in $M^{n-1} \backslash M^n$ (Theorem \ref{Upper}).  Finally we show that if $(R,M)$ is a quasilocal domain with $M \neq M^2$ having only finitely many nonassociate atoms, then $P = \bigcap_{n=1}^{\infty} M^n$ is prime and $R/P$ is a CK domain (Theorem \ref{Uni}).

Section 3 concentrates on local CK domains.  We review some characterizations of local CK domains.  We offer alternative proofs and sharpen several results from \cite{CK}.  Let $(R,M)$ be a local CK domain that is not a DVR.  Let $V = U([M:M]) / U(R)$ where for a ring $S$, $U(S)$ is the group of units of $S$.  Now $V$ is finite and $|V| \geq |\overline{R}|$ with $|V| \geq |\overline{R}|+1$ if $M$ is the maximal ideal of $[M:M]$ (Theorem \ref{Vcard}).  For $x \in M$ and $u \in U([M:M])$, $x \in M^{n-1} \backslash M^n$ $\iff$ $ux \in M^{n-1} \backslash M^n$ and $x$ is an atom $\iff$ $ux$ is an atom.  Thus the number of nonassociate atoms in $M^{n-1} \backslash M^n$ is a multiple of $|V|$, possibly $0$ for $n \geq 3$.  Moreover, $M^2$ is universal $\iff$ the nonassociate atoms consist of a single $V$-class $\iff$ the nonassociate atoms contained in $M \backslash M^2$ consist of a single $V$-class.  Thus if the number of nonassociate atoms in $R$ (or in $M \backslash M^2$) is prime, $M^2$ is universal (Corollary \ref{2p}).

The fourth section consists of examples.  Of particular interest are local CK domains of the form $R = K + WX + F[[X]]X^2$ where $K \subseteq F$ is an extension of finite fields and $W$ is a $K$-subspace of $F$.  Cohen and Kaplansky's paper \cite{CK} is entitled \enquote{Rings with a finite number of primes. I.}  (They use the term \enquote{prime} to mean an atom.)  II never appeared, but on page 472 in regard to the result on the universality of $M^{n-1}$ when $R$ is a CK domain with precisely $n$ nonassociate atoms, they state \enquote{This result will incidentally be considerably sharpened in the paper that follows.}  A question they raised, but were unable to answer, was whether a local CK domain $(R,M)$ could have an atom in $M^2$.  To quote from page $473$ of their paper: \enquote{Whether or not there exist rings with a prime (sic) in $M^2$ is a question that has not yet been settled.  It follows from (2), and the fact that $k$ and $N$ are at least $2$, that such a ring must have at least seven primes (sic).  Since we shall prove below that $M^2$ is universal when $n$ is prime, the lower bound becomes $n=8$.  We shall continue this discussion in the second paper; but we remark that at the moment our best result has ruled out the possibility of a prime (sic) in $M^2$ for $n=8$ or $n=9$.}  Now in \cite{AMO} it was shown that you can have an atom in $M^2$.  Using the construction given there, we give an example of a local CK domain $(R,M)$ with exactly eight nonassociate atoms having two nonassociate atoms in $M^2$.  Perhaps this is why II never appeared.  We also use the construction given in \cite{AMO} to construct a local CK domain $(R,M)$ with $M^{2n}$ universal, but $M^{2n-1}$ not universal.  In Section 5 we investigate the existence of local CK domains with exactly $n$ nonassociate atoms for small $n$.

\section{Atoms in Quasilocal Domains}

In this section we study the set of atoms of a quasilocal domain $(R,M)$.  We will usually assume that $M \neq M^2$ so we have atoms in $M \backslash M^2$.  While our main goal is to study local CK domains, in this section we try to keep the results as general as possible by not assuming that $R$ is atomic or that the number of nonassociate atoms involved is necessarily finite.  Several of the results of this section have previously been given for CK domains \cite{CK}.

Recall that $R$ is a \emph{bounded} \emph{factorization} \emph{domain} (\emph{BFD}) if for each nonzero nonunit $x \in R$ there is a natural number $N(x)$ so that if $x = x_1 \cdots x_n$ where $x_i \in R$ is a nonunit, then $n \leq N(x)$.  We say that $R$ satisfies the \emph{ascending chain condition} \emph{on principal ideals} (\emph{ACCP}) if any ascending chain of principal ideals of $R$ stabilizes.  It is well known and easily proved that

\begin{center}
BFD $\implies$ ACCP $\implies$ atomic
\end{center}

\noindent and that none of these implications can be reversed, even for quasilocal domains.  See Section 4 for more details.  We next generalize the well known result that a quasilocal domain $(R,M)$ with $\bigcap_{n=1}^{\infty} M^n = 0$ is a BFD.  Recall that $M^{\beta}$ is defined for each ordinal $\beta$ where $M^{\beta + 1} = M M^{\beta}$ and for $\beta$ a limit ordinal $M^{\beta} = \bigcap_{\alpha < \beta} M^{\alpha}$.

\begin{thm} \label{BFDgen}
Let $(R,M)$ be a quasilocal domain.  If $M^{\alpha}=0$ for some ordinal $\alpha$, then $R$ satisfies ACCP.  If further $M^{w} = \bigcap_{n=1}^{\infty} M^n = 0$, $R$ is a BFD.
\end{thm}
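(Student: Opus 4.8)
The plan is to assign to each nonzero $x \in R$ an ordinal ``$M$-order'' $v(x)$, measured against the descending transfinite chain $R = M^0 \supseteq M^1 \supseteq \cdots \supseteq M^\beta \supseteq \cdots$, and to show that multiplying by a nonunit strictly raises this order. Since $M^\alpha = 0$ and $x \neq 0$, the class of ordinals $\beta$ with $x \notin M^\beta$ is nonempty; let $\lambda(x)$ be its least element. A short transfinite induction first shows the chain is genuinely decreasing (the successor step is $M^{\beta+1} = MM^\beta \subseteq M^\beta$, the limit step is the defining intersection), so $\{\beta : x \in M^\beta\}$ is an initial segment of the ordinals. I would then argue that $\lambda(x)$ is a successor: it is nonzero because $x \in M^0 = R$, and it cannot be a limit ordinal, for if it were then $x \in M^\beta$ for every $\beta < \lambda(x)$ would force $x \in \bigcap_{\beta < \lambda(x)} M^\beta = M^{\lambda(x)}$, a contradiction. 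Writing $\lambda(x) = v(x)+1$, the ordinal $v(x)$ is the largest with $x \in M^{v(x)}$, so $v$ is well defined on $R \setminus \{0\}$.

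The single computational input is the inequality $v(ac) \geq v(a)+1$ for $a \neq 0$ and $c$ a nonzero nonunit. Indeed, since $R$ is quasilocal, $c \in M$, so $ac \in M^{v(a)} M = M^{v(a)+1}$, giving the claim immediately from the definition $M^{v(a)+1} = M M^{v(a)}$; notably this avoids needing the full superadditivity $M^\beta M^\gamma \subseteq M^{\beta+\gamma}$.

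For ACCP, suppose $Rx_1 \subsetneq Rx_2 \subsetneq \cdots$ were a strictly ascending chain of principal ideals. Each proper inclusion gives $x_i = x_{i+1} c_i$ with $c_i$ a nonzero nonunit, so $v(x_i) \geq v(x_{i+1}) + 1 > v(x_{i+1})$, producing an infinite strictly descending sequence of ordinals, which is impossible by well-ordering; hence ACCP holds. For BFD, the hypothesis $M^\omega = \bigcap_{n} M^n = 0$ forces $\lambda(x) \le \omega$, and since $\lambda(x)$ is a successor it must be finite, so $v(x) \in \N$ for every nonzero $x$. Iterating the inequality along any factorization $x = x_1 \cdots x_n$ into nonunits (necessarily nonzero, as $R$ is a domain) yields $v(x) \geq n$, so the fixed natural number $N(x) = v(x)$ bounds every factorization length of $x$, and $R$ is a BFD.

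I expect the one place demanding care to be the well-definedness of $v$ as an ordinal, precisely the verification that $\lambda(x)$ is never a limit ordinal, which is exactly where the intersection definition of $M^\lambda$ at limit stages is essential; once $v$ is in hand, both conclusions are short. A secondary point worth stating explicitly is that in the quasilocal setting every proper divisibility relation introduces a factor lying in $M$, which is what lets the order rise by at least one at each step.
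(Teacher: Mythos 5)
Your proof is correct and takes essentially the same approach as the paper's: both define the ordinal-valued order $\phi(x)=v(x)$ (the unique $\beta$ with $x \in M^{\beta}\backslash M^{\beta+1}$), show that multiplication by a nonunit strictly raises it, and then deduce ACCP from the well-ordering of the ordinals and the BFD property from the finiteness of this order when $M^{\omega}=0$. The only difference is that you spell out the well-definedness of the order (that the least exit ordinal is a successor, via the intersection definition at limit stages), which the paper leaves implicit.
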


\begin{proof}
  Define a function $\phi: M \backslash \{0\} \to \text{ORD}$ by $\phi(x) = \beta$ where $x \in M^{\beta} \backslash M^{\beta+1}.$  Now for $x,y \in M \backslash \{0\}$, $\phi(xy) > \phi(x)$.  Hence if $0 \neq Rx_1 \subsetneq Rx_2 \subsetneq Rx_3 \subsetneq \cdots$ is an infinite ascending chain of principal ideals in $R$, $\phi(x_1) > \phi(x_2) > \phi(x_3) > \cdots$ is an infinite descending chain of ordinals, a contradiction.  (This is \cite[Proposition 2]{AJ}.)  For the case where $0 = M^{w} = \bigcap_{n=1}^{\infty} M^n$, let $0 \neq x = x_1 \cdots x_m$ where $x_i \in M$.  Then $m \leq \phi(x)$; so $R$ is a BFD.
\end{proof}

Let $S$ and $T$ be subsets of $R \backslash \{0\}$ where $R$ is an integral domain.  We say that $S$ is \emph{universally divisible by $T$} if each element of $S$ is divisible by each element of $T$, or equivalently, $S \subseteq \bigcap_{t \in T} Rt$.  For $(R,M)$ quasilocal, $M^n$ is (\emph{weakly}) \emph{universal} if $M^n$ is universally divisible by $T=\{x | x \in R \text{ is an atom}\}$ ($T=\{x | x \in M \backslash M^2$).  The concept of $M^n$ being universal was introduced by Cohen and Kaplansky \cite{CK} who characterized the CK domains with $M^2$ universal and showed that if $R$ is a local CK domain with exactly $n$ nonassociate atoms, then $M^{n-1}$ is universal; see Theorem \ref{Weakly} for a generalization.

%  See Theorem what?

We next characterize quasilocal domains $(R,M)$ with $M^2$ universal.

\begin{thm} \label{QLdoms}
Let $(R,M)$ be a quasilocal domain with $M \neq M^2$.  Put $\overline{R}=R/M$.  Let $\{V_{\alpha}\}_{\alpha \in \Lambda}$ be the set of one-dimensional $\overline{R}$-subspaces of $M/M^2$.  For each $\alpha \in \Lambda$, let $x_{\alpha} \in M \backslash M^2$ with $V_{\alpha} = \overline{R} \overline{x_{\alpha}}$.

\begin{itemize}
 \item[(1)]  If $x \in M \backslash M^2$, $x$ is an atom of $R$.
 
 \item[(2)]  If $x,y \in M$ with $x \sim y$, then $\overline{R} \overline{x} = \overline{R} \overline{y}$.  So associate atoms of $M \backslash M^2$ determine the same one-dimensional subspace of $M/M^2$.
 
 \item[(3)]  $\{x_{\alpha}\}_{\alpha \in \Lambda}$ is a set of nonassociate atoms in $M \backslash M^2$.
 
 \item[(4)]  Suppose that there is an atom $q \in M^2$.  Let $\{u_{\beta}\}_{\beta \in \Gamma}$ be a complete set of representatives of $\overline{R}$.  Then $\{x_{\alpha}+u_{\beta}q\}_{(\alpha,\beta)\in \Lambda \times \Gamma}$ is a set of nonassociate atoms in $M \backslash M^2$.
 
 \item[(5)]  The following are equivalent:
 
 \begin{itemize}
    \item[(a)]  $M^2$ is universal,
    
    \item[(b)]  $\{x_{\alpha}\}_{\alpha \in \Lambda}$ is a complete set of nonassociate atoms in $M \backslash M^2$,
    
    \item[(c)]  $\{x_{\alpha}\}_{\alpha \in \Lambda}$ is a complete set of nonassociate atoms of $R$,
    
    \item[(d)]  $aM=M^2$ for each $a \in M \backslash M^2$,
    
    \item[(e)]  $M^2$ is weakly universal, and
    
    \item[(f)]  $[M:M]$ is a quasilocal domain with principal maximal ideal $M$.
 \end{itemize}
 
 \item[(6)]  $R$ is an atomic domain with $M^2$ universal if and only if $[M:M]$ is a DVR with maximal ideal $M$.  In this case $R$ is even a BFD.
  
 \item[(7)]  Suppose that $R$ is local.  Then $M^2$ is universal if and only if $R'$, the integral closure of $R$, is a DVR with maximal ideal $M$.
\end{itemize}
\end{thm}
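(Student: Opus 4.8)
I would dispatch the first three parts quickly. For (1), if $x=ab$ with $a,b$ nonunits then $a,b\in M$ by quasilocality, so $x\in M^2$, contradicting $x\in M\backslash M^2$; hence $x$ is an atom. For (2), write $y=ux$ with $u$ a unit and pass to the $\overline{R}$-module $M/M^2$: there $\overline{y}=\overline{u}\,\overline{x}$ with $\overline{u}\in\overline{R}\backslash\{0\}$, so $\overline{R}\,\overline{x}=\overline{R}\,\overline{y}$. Part (3) is then immediate, since each $x_{\alpha}\in M\backslash M^2$ is an atom by (1) and distinct subspaces $V_{\alpha}$ force nonassociativity by (2). Part (4) is the first substantive point: after noting each $x_{\alpha}+u_{\beta}q$ has image $\overline{x_{\alpha}}\neq 0$ in $M/M^2$ (so lies in $M\backslash M^2$ and is an atom), I would suppose $x_{\alpha}+u_{\beta}q\sim x_{\alpha'}+u_{\beta'}q$, use (2) to get $\alpha=\alpha'$, and reduce the relation $x_{\alpha}+u_{\beta}q=v(x_{\alpha}+u_{\beta'}q)$ modulo $M^2$ to see the connecting unit satisfies $v\equiv 1$. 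Writing $m=v-1\in M$ and rearranging gives $-m(x_{\alpha}+u_{\beta'}q)=(u_{\beta'}-u_{\beta})q$; were $u_{\beta'}-u_{\beta}$ a unit, this would exhibit the atom $q$ as a product of two nonunits, a contradiction, so $\overline{u_{\beta}}=\overline{u_{\beta'}}$ and $\beta=\beta'$.

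\textbf{Part (5): the equivalences.} I would close the cycle (a)$\Rightarrow$(e)$\Rightarrow$(b)$\Rightarrow$(c)$\Rightarrow$(a), together with (e)$\Leftrightarrow$(d) and (d)$\Leftrightarrow$(f). Here (a)$\Rightarrow$(e) is free, since every element of $M\backslash M^2$ is an atom by (1). For (e)$\Rightarrow$(b): given an atom $y\in M\backslash M^2$, write $\overline{y}=\overline{u}\,\overline{x_{\alpha}}$, so $y=ux_{\alpha}+w$ with $w\in M^2$; weak universality gives $w=rx_{\alpha}$, and $\overline{w}=0$ forces $r\in M$, whence $y=(u+r)x_{\alpha}\sim x_{\alpha}$. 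The step (b)$\Rightarrow$(c) is exactly where part (4) is essential: if an atom $q\in M^2$ existed, then (4) produces an atom in $M\backslash M^2$ (e.g.\ $x_{\alpha}+q$) nonassociate to every $x_{\alpha'}$, contradicting completeness of $\{x_{\alpha}\}$; hence there are no atoms in $M^2$ and (b) upgrades to (c). The equivalences (e)$\Leftrightarrow$(d) are a direct manipulation of $M^2\subseteq Ra$ versus $aM=M^2$, using (2) to rule out a unit multiple of $a$ landing in $M^2$. Finally (c)$\Rightarrow$(a) follows because every atom is then an associate of some $x_{\alpha}$, for which the already-available identity $M^2=x_{\alpha}M\subseteq Rx_{\alpha}$ gives universality.

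\textbf{Part (5)(f) and the main obstacle.} The equivalence (d)$\Leftrightarrow$(f), transferring information to $T=[M:M]$, is the most technical step and the one I expect to be the main obstacle. Fixing $a\in M\backslash M^2$, I would first show $M=Ta$: the inclusion $Ta\subseteq M$ is immediate from the definition of $T$, while for $M\subseteq Ta$ I would use (d) in the form $mM\subseteq M^2=aM$ to conclude $m/a\in[M:M]=T$ for every $m\in M$. For quasilocality, I would show every nonzero $t\in T$ is either in $M$ or a unit of $T$ by examining $ta\in M$: if $ta\in M^2=aM$, cancelling $a$ gives $t\in M$; if $ta\in M\backslash M^2$, then (d) gives $taM=aM$, hence $tM=M$ and $tT=T$, so $t$ is a unit. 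Since no element of $M$ is a unit of $T$ (else $1\in m^{-1}M\subseteq M$), the nonunits of $T$ are exactly $M$, so $T$ is quasilocal with principal maximal ideal $M=Ta$. The converse (f)$\Rightarrow$(d) is the reverse computation: writing $M=Tc$, any $a\in M\backslash M^2$ has the form $a=tc$ with $t$ a unit of $T$, so $aM=tc\cdot Tc=Tc^2=M^2$.

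\textbf{Parts (6) and (7).} For (6), the forward direction combines (5)(f) with atomicity: $T=[M:M]$ is already quasilocal with principal maximal ideal $M=Tc$, so it remains to show $\bigcap_n Tc^n=0$, which upgrades $T$ to a DVR. Atomicity is decisive here: a nonzero element of $\bigcap_n Tc^n$ lies in $R$ and, being a finite product of atoms (each in $M\backslash M^2$ by (5)(c), hence of the form $(\text{unit of }T)\cdot c$), would equal $uc^k$ for a unit $u$ of $T$ and some finite $k$, forcing it out of $M^{k+1}=Tc^{k+1}$, a contradiction. Conversely, if $T$ is a DVR with maximal ideal $M$ then (5) gives $M^2$ universal, while $\bigcap_n M^n\subseteq\bigcap_n Tc^n=0$ lets Theorem \ref{BFDgen} conclude that $R$ is a BFD, in particular atomic. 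Finally, (7) is (6) specialized to the Noetherian case: since $M$ is finitely generated, $T=[M:M]$ is integral over $R$, so $T\subseteq R'$; as a DVR is integrally closed, whichever of $T$ or $R'$ is assumed to be a DVR with maximal ideal $M$ forces $T=R'$, and the equivalence in (6) (using that a Noetherian domain is atomic) then reads exactly as $M^2$ universal $\iff$ $R'$ a DVR with maximal ideal $M$.
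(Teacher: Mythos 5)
Your parts (1)--(4), the implications (a)$\Rightarrow$(e)$\Rightarrow$(b)$\Rightarrow$(c), the equivalences (e)$\Leftrightarrow$(d)$\Leftrightarrow$(f), and parts (6) and (7) are all correct and close in spirit to the paper's proof (your computation for (f)$\Rightarrow$(d), and your direct verification in (6) that $\bigcap_n Tc^n=0$ via atomicity, are fine variants of what the paper does). But the step (c)$\Rightarrow$(a) is circular, and this is a genuine gap. You justify it by citing ``the already-available identity $M^2=x_{\alpha}M$''; that identity is precisely statement (d) (equivalently, universality itself), and under the hypothesis (c) alone nothing you have proved delivers it: your only proof of (d) runs through (e), and (e) is not known to follow from (c) at that point. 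Concretely, your implication graph is (a)$\Rightarrow$(e)$\Rightarrow$(b)$\Rightarrow$(c) together with (e)$\Leftrightarrow$(d)$\Leftrightarrow$(f), and no valid arrow in it terminates at (a); so statements (b), (c), (d), (e), (f) are never tied back to (a). The gap propagates: the backward direction of your (6) invokes ``(5) gives $M^2$ universal'' starting from (f), which is exactly the missing implication (f)$\Rightarrow$(a), and the backward direction of (7) then inherits the same problem.

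The missing idea is the paper's perturbation argument. Assume (c) and let $x \in M^2$. For each $\alpha$, the element $x_{\alpha}+x$ lies in $M \backslash M^2$, hence is an atom by (1); by (c) it is associate to some $x_{\beta}$, and by (2) necessarily $\beta=\alpha$. So $x_{\alpha}+x = u x_{\alpha}$ for a unit $u$, whence $x=(u-1)x_{\alpha} \in Rx_{\alpha}$. Thus $M^2 \subseteq \bigcap_{\alpha} Rx_{\alpha}$, which by (c) is the intersection of $Ra$ over all atoms $a$ of $R$, i.e., (a) holds. (Alternatively you could repair your scheme by proving (d)$\Rightarrow$(a) directly: under (d), every element of $M^2=aM$ is a product $am$ of two nonunits, so there are no atoms in $M^2$, and then $M^2=aM \subseteq Ra$ for every atom $a$.) With either patch inserted, the rest of your write-up goes through.
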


\begin{proof}
  (1) and (2) are clear and together prove (3).  (We note that (3) is well known with the finite case given in \cite{CK}.)  (4) Cohen and Kaplansky \cite{CK} proved this for $R$ a CK domain.  While their proof extends to this case mutatis mutandis, we give the simple proof for completeness.  Certainly each $x_{\alpha}+u_{\beta}q \in M \backslash M^2$ is an atom.  Suppose that $x_{\alpha}+u_{\beta}q \sim x_{\alpha '}+u_{\beta '}q$, so $x_{\alpha}+u_{\beta}q = u (x_{\alpha '}+u_{\beta '}q)$ for some unit $u \in R$.  Then $\overline{x_{\alpha}} = \overline{u} \; \overline{x_{\alpha '}}$, so $\alpha = \alpha '$ and $\overline{u} = \overline{1}$ in $\overline{R}$.  Now $x_{\alpha}(1-u) = (u u_{\beta '} - u_{\beta})q$, so $x_{\alpha} \not \sim q$ gives $u u_{\beta '} - u_{\beta} \in M$.  Finally, $\overline{u}=\overline{1}$ in $\overline{R}$ gives $\overline{u_{\beta '}} = \overline{u_{\beta}}$;  so $\beta ' = \beta$.
 
(5)  (a) $\implies$ (b)  Suppose that $M^2$ is universal.  Let $x \in M \backslash M^2$ be an atom.  So $\overline{R} \overline{x} = V_{\alpha} = \overline{R} \overline{x_{\alpha}}$ for some $\alpha \in \Lambda$.  Now $M^2$ universal gives $M^2 \subseteq Rx \cap Rx_{\alpha}$, so $Rx = Rx+M^2 = Rx_{\alpha}+M^2=Rx_{\alpha}$.  Hence $x \sim x_{\alpha}$.  (b) $\implies$ (c)  Suppose there is an atom $q \in M^2$.  Then by (4), for any $\alpha \in \Lambda$, $x_{\alpha}+q$ is an atom in $M \backslash M^2$ not associated with any $x_{\beta}$, a contradiction.  (c) $\implies$ (a)  Let $x \in M^2$.  For any $\alpha \in \Lambda$, $x_{\alpha}+x \in M \backslash M^2$ and hence is an atom.  So $x_{\alpha}+x \sim x_{\beta}$ for some $\beta$, necessarily with $\beta = \alpha$ since $\overline{x_{\alpha}}=\overline{x_{\beta}}$.  So $x_{\alpha}+x = ux_{\alpha}$ for some unit $u \in R$.  Then $x=(u-1)x_{\alpha} \in R x_{\alpha}$.  So $M^2 \subseteq \bigcap_{\alpha \in \Lambda} Rx_{\alpha} = \bigcap \{Ra \; | \; a \text{ is an atom of R}\}$.  (a) , (e) $\iff$ (d)  Just observe that for $a \in M \backslash M^2$, $Ra \supseteq M^2 \iff aM=M^2$.  (d) $\implies$ (f)  Let $x \in [M:M]$ be a nonunit, so $xM \subsetneq M$.  Let $a \in M \backslash M^2$.  Now $xa \in M \backslash M^2 \implies xaM=M^2 = aM \implies xM=M$, a contradiction.  Thus $xa \in M^2 = aM \implies x \in M$.  Hence $[M:M]$ is quasilocal with maximal ideal $M$.  Let $a \in M \backslash M^2$; we show $M = a[M:M]$.  For $b \in M$, $aM = M^2 \supseteq bM$ so $b/a \in [M:M]$.  Thus $b \in a[M:M]$.  Hence $M \subseteq a[M:M] \subseteq M$.  (f) $\implies$ (a)  Suppose $M=a[M:M]$ where $a \in M$.  So atoms of $R$ have the form $ua$ where $u \in [M:M]$ is a unit.  Hence $M^2 = a^2 [M:M]=a[M:M](ua) \subseteq Rua$.

(6)  $(\impliedby)$  Suppose that $[M:M]$ is a DVR with maximal ideal $M$.  By (5), $M^2$ is universal.  Since $[M:M]$ is a DVR, $\bigcap_{n=1}^{\infty} M^n = 0.$  Hence $R$ is a BFD and hence is atomic.  ($\implies$)  This is \cite[Corollary 5.2]{AMO}, but we offer a simple self-contained proof.  By (5), $[M:M]$ is a quasilocal domain with principal maximal ideal $M$.  We first show that $[M:M]$ is a valuation domain.  Let $x,y \in M \backslash \{0\}$ so $x = a_1 \cdots a_n$, $y = b_1 \cdots b_m$ where $a_i, b_j$ are atoms.  Now by (5) $a_i M = M^2 = b_j M$.  Hence $a_1 \cdots a_n M = M^{n+1}$ and $b_1 \cdots b_m M = M^{m+1}$.  Suppose $n \leq m$.  Then $yM = b_1 \cdots b_m M = M^{m+1} \subseteq M^{n+1} = a_1 \cdots a_n M = xM$.  So $y/x \in [M:M]$.  It follows that $[M:M]$ is a valuation domain.  Now $M^{n+1} \subseteq Rx$.  So $\bigcap_{n=1}^{\infty} M^n \subseteq \bigcap \{Rx \; | \; x \in M \backslash \{0\}\}=0$.  Thus $[M:M]$ is a DVR.

(7)  Suppose that $R$ is local.  Then $R \subseteq [M:M]\subseteq R'$.  If $M^2$ is universal, $[M:M]$ is a DVR by (6) and hence $[M:M] = R'$.  Conversely, suppose that $R'$ is a DVR with maximal ideal $M$.  Then $R'M \subseteq M$ so $R' \subseteq [M:M]$.  Hence $[M:M] = R'$ is a DVR with maximal ideal $M$.  By (6), $M^2$ is universal.
\end{proof}

\begin{cor}
Let $(V,M)$ be a quasilocal domain with principal maximal ideal $M$.  Let $L$ be a subfield of $V/M$.  Let $R$ be the pullback of

\begin{center}
\[\xymatrixcolsep{1.5pc}
\xymatrix{
R \ar@{-->}[r] \ar@{-->}[d] & V \ar[d] \\
L \ar@{}[r]|-*[@]{\subseteq} & V/M.}
\]
\end{center}

Then $R$ is a quasilocal domain with maximal ideal $M$, $M^2$ universal, and $V=[M:M]$.

Conversely, if $R$ is a quasilocal domain with maximal ideal $M \neq M^2$ and $M^2$ universal, then $V=[M:M]$ has principal maximal ideal $M$ and $R$ is the pullback of

\begin{center}
\[\xymatrixcolsep{3pc}
\xymatrix{
 & V \ar[d] \\
R/M \ar@{^{(}->}[r] & V/M.}
\]
\end{center}

\end{cor}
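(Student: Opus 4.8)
The plan is to derive both directions from the equivalence $(a)\iff(f)$ of Theorem~\ref{QLdoms}(5), reading each pullback square as the assertion that $R$ is the preimage of a subfield under the residue map. Throughout I write $\pi\colon V \to V/M$ for the quotient map, so that the pullback in the first diagram is $R = \pi^{-1}(L) = \{v \in V : \overline{v} \in L\}$, and the pullback in the second diagram is $\pi^{-1}(R/M)$, where $R/M$ is regarded as a subring of $V/M$.

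For the forward direction I would first check the ring-theoretic claims about $R = \pi^{-1}(L)$. Since $L$ is a subring of $V/M$, its preimage $R$ is a subring of the domain $V$, hence a domain, and $M = \ker\pi \subseteq R$ is a common ideal with $R/M \cong L$; as $L$ is a field, $M$ is maximal in $R$. To see that $R$ is quasilocal with maximal ideal $M$, take $x \in R \setminus M$: then $\overline{x} \in L \setminus \{0\}$ is a unit of $L$, and since $V$ is quasilocal $x$ is a unit of $V$ with $\overline{x^{-1}} = \overline{x}^{\,-1} \in L$, so $x^{-1} \in R$ and $x$ is a unit of $R$. Writing $M = tV$ with $t \neq 0$, one has $M^2 = t^2 V \subsetneq tV = M$, so Theorem~\ref{QLdoms} applies. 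The key remaining step is to identify $[M:M] = V$: since $t$ and $tv$ lie in $M \subseteq R$ for every $v \in V$, we get $V \subseteq \operatorname{Frac}(R) =: K$, so $R$ and $V$ share the quotient field $K$; the inclusion $V \subseteq [M:M]$ holds because $M$ is an ideal of $V$, while for $x \in [M:M]$ the relation $xt \in M = tV$ forces $x \in V$ after cancelling $t$. Thus $[M:M] = V$ is quasilocal with principal maximal ideal $M$, and Theorem~\ref{QLdoms}(5), $(f)\Rightarrow(a)$, yields that $M^2$ is universal.

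For the converse, Theorem~\ref{QLdoms}(5), $(a)\Rightarrow(f)$, immediately gives that $V := [M:M]$ is quasilocal with principal maximal ideal $M$, which is the first assertion. Since $M \subseteq R \subseteq V$ and $M$ is the maximal ideal of both, the map $R/M \to V/M$ induced by inclusion is injective (its kernel is $(M \cap R)/M = 0$), so the second square is well posed and its pullback is $\pi^{-1}(R/M)$. It then remains to verify $R = \pi^{-1}(R/M)$: the inclusion $\subseteq$ is clear, and conversely if $v \in V$ satisfies $\overline{v} \in R/M$, choose $r \in R$ with $v - r \in M \subseteq R$, whence $v = r + (v-r) \in R$.

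I expect the only genuine content beyond bookkeeping to be the computation $[M:M] = V$, together with the verification that $R$ and $V$ have a common quotient field; everything else is a direct reading of the two pullback squares against the equivalence $(a)\iff(f)$ of Theorem~\ref{QLdoms}(5). In particular no new factorization-theoretic argument is required, since the substantive equivalence has already been established there.
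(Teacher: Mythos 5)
Your proof is correct and takes exactly the route the paper intends: the corollary is stated there without proof as an immediate consequence of Theorem~\ref{QLdoms}(5), specifically the equivalence (a)$\iff$(f), and your argument supplies precisely the routine verifications needed to invoke it --- that the pullback is quasilocal with maximal ideal $M$, that $M \neq M^2$, that $R$ and $V$ share a quotient field, and that $[M:M]=V$ by cancelling the generator $t$. Nothing in your write-up deviates from or adds to the paper's (implicit) argument; it simply makes the bookkeeping explicit.
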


\begin{cor}
Let $(V,M)$ be a quasilocal domain with principal maximal ideal $M$.  Let $(D,P)$ be a quasilocal subring of $V$ with $P = D \cap M$.  Let $R = D+M$.  Then $R$ is a quasilocal domain with maximal ideal $M$ and $M^2$ is universal.
\end{cor}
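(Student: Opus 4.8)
The plan is to observe that this corollary is a special case of the preceding pullback corollary and to reduce to it, recording afterwards the self-contained route through Theorem~\ref{QLdoms}. Let $\pi\colon V \to V/M$ be the canonical surjection and set $L = \pi(D)$. Since $(D,P)$ is quasilocal and $P = D \cap M = \ker(\pi|_D)$, we have $L \cong D/P$, a field, so $L$ is a subfield of $V/M$. I would then check that $R = D+M$ is exactly the pullback $\pi^{-1}(L)$ of $L \hookrightarrow V/M$: the inclusion $D+M \subseteq \pi^{-1}(L)$ holds because $\pi(d+m)=\pi(d)\in L$, and conversely if $\pi(v)\in L$ then $\pi(v)=\pi(d)$ for some $d\in D$, so $v-d\in M$ and $v=d+(v-d)\in D+M$. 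Once $R$ is identified with this pullback, the previous corollary yields at once that $R$ is quasilocal with maximal ideal $M$, that $M^2$ is universal, and in fact that $V=[M:M]$.

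If one prefers to argue directly from Theorem~\ref{QLdoms}, the plan is as follows. One checks that $R=D+M$ is a subring of $V$ (closure under multiplication uses that $M$ is an ideal of $V$, so that $dm'$, $md'$ and $mm'$ all land in $M$) containing $M$ as an ideal, with $R/M=(D+M)/M\cong D/P$ a field, so that $M$ is a maximal ideal of $R$. To see that $R$ is quasilocal with maximal ideal $M$, take $r=d+m\notin M$; then $d\notin P$, so $d\in U(D)$, and $r=d(1+d^{-1}m)$ with $d^{-1}m\in M$. The crucial point is that $1+M\subseteq U(R)$: for $m'\in M$ the element $1+m'$ is a unit of $V$ (since $V$ is quasilocal and $1\notin M$), and its inverse $w$ satisfies $w=1-m'w\in 1+M\subseteq R$, so $w$ lies back in $R$. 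Hence every element outside $M$ is a unit, and $R$ is quasilocal with maximal ideal $M$.

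It then remains to identify $[M:M]$. Writing $M=tV$ with $0\neq t\in M\subseteq R$, one first sees that $R$ and $V$ share a common quotient field $K$, since every $v\in V$ equals $(vt)/t$ with $vt\in M\subseteq R$ and $t\in R$. Now $V\subseteq[M:M]$ is clear from $VM\subseteq M$, while for $x\in[M:M]\subseteq K$ one has $xt\in xM\subseteq M=tV$, whence $x\in V$; thus $[M:M]=V$, a quasilocal domain with principal maximal ideal $M$. Theorem~\ref{QLdoms}(5) (the implication (f)$\Rightarrow$(a)) then gives that $M^2$ is universal.

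In either approach the only genuinely delicate step is the verification that $R$ is quasilocal, that is, that the inverse (computed in $V$) of an element of $1+M$ actually lies back in $R=D+M$ rather than merely in $V$. This is exactly the place where the hypothesis that $M$ is an ideal of $V$ is used, to force $m'w\in M$, and it is the crux that makes the construction behave well; everything else is routine verification.
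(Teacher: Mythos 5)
Your proposal is correct and takes essentially the same route as the paper: the paper states this corollary without proof as an immediate consequence of the preceding pullback corollary, and your identification of $R=D+M$ with the pullback $\pi^{-1}(\pi(D))$, where $\pi(D)\cong D/P$ is a subfield of $V/M$, is exactly that reduction. Your alternative direct argument via Theorem~\ref{QLdoms}(5), showing $[M:M]=V$ and invoking (f)$\Rightarrow$(a), is also sound; the only hypothesis you leave unremarked is $M\neq M^2$, which is automatic here since $M=tV$ is a nonzero principal ideal of the domain $V$ (if $tV=M=M^2\subseteq t^2V$ then $1\in tV$, a contradiction).
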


%  Proof?

\begin{note}
 $(R,M)$ with $M^2$ universal does not imply that $R$ is atomic.  For example, take $(V,M)$ to be a valuation domain with principal maximal ideal $M$ and dim $V > 1$.  Let $(D,P)$ be a subring of $V$ with $P = M \cap D$ (e.g., $D = V$), then $R = D+M$ is quasilocal with maximal ideal $M$ and $M^2$ is universal, but $R$ is not atomic since $V = [M:M]$ is not a DVR.
\end{note}

The next theorem investigates the number of nonassociate atoms in $M \backslash M^2$ for a quasilocal domain $(R,M)$.  We need the following definitions.

Let $R$ be an integral domain.  We call $R$ a \emph{finite atom} (\emph{FA}) \emph{domain} if $R$ has only finitely many (possible none) nonassociate atoms.  In the extreme case where $R$ has no atoms, following \cite{CDM} we call $R$ an \emph{antimatter domain}.  Thus an atomic FA domain is just a CK domain.  For $(R,M)$ quasilocal, $R$ is a \emph{weak finite atom} (\emph{WFA}) \emph{domain} if there are only finitely many nonassociate atoms in $M \backslash M^2$.  Thus if $M=M^2$, $R$ is a WFA domain.  Let $(V,M)$ be a valuation domain.  As either $M=M^2$ and $V$ is antimatter or $M=(a)$ and $a$ is the only atom of $V$ up to associates, $V$ is a FA domain.

\begin{thm} \label{QLatom}
Let $(R,M)$ be a quasilocal domain.  Put $\overline{R} = R/M$.

\begin{itemize}
\item [(1)]  $R$ has no atoms in $M \backslash M^2$ if and only if $M=M^2$.

\item [(2)]  If there are only finitely many nonassociate atoms in $M \backslash M^2$, but at least one, then $M$ is finitely generated.  Thus if $R$ is a WFA domain, either $M=M^2$ or $M$ is finitely generated.

\item [(3)]  If $M = (a)$ is principal, then $a \in M \backslash M^2$ and $a$ is the only atom of $R$ up to associates.  Conversely, suppose that up to associates there is only one atom $a$ in $M \backslash M^2$.  Then $M = (a)$.

\item [(4)]  The following are equivalent:

\begin{itemize}
\item [(a)]  $R$ is a DVR,

\item [(b)]  $M$ is principal and $R$ is atomic,

\item [(c)]  $R$ is atomic, $\text{dim}_{\overline{R}} \; M/M^2 = 1$, and there are only finitely many atoms in $M \backslash M^2$ up to associates,

\item [(d)]  $R$ is atomic and has exactly one atom up to associates, and

\item [(e)]  $R$ is atomic and has exactly one atom in $M \backslash M^2$ up to associates.
\end{itemize}

\item [(5)]  If $\overline{R}$ is infinite, there are either no atoms in $M \backslash M^2$ (i.e., $M=M^2$), exactly one atom in $M \backslash M^2$ up to associates (i.e., $M$ is principal), or there are infinitely many nonassociate atoms in $M \backslash M^2$.  Thus if $R$ is a WFA domain, either $M=M^2$ or $M$ is principal.

\item [(6)]  Suppose that $\overline{R}$ is finite.  If $\text{dim}_{\overline{R}} M/M^2$ is infinite, there are infinitely many nonassociate atoms in $M \backslash M^2$.  If $\text{dim}_{\overline{R}} \; M/M^2 = 0$, then $M=M^2$ and there are no atoms in $M \backslash M^2$.  Suppose that $1 \leq \text{dim}_{\overline{R}} M/M^2 = k < \infty$ and $m:= (|\overline{R}|^k -1) / (|\overline{R}|-1)$.  Suppose there are $n$ nonassociate atoms in $M \backslash M^2$.  Then $n \geq m$.  If there is an atom in $M^2$, then $n \geq m |\overline{R}|$ and hence there are at least $m |\overline{R}|+1 = (|\overline{R}|^{k+1}-1)/(|\overline{R}|-1)$ nonassociate atoms in $R$.  Suppose $n < \infty$, then $M$ can be generated by $\floor{\text{log}_{|\overline{R}|} \; n}+1$ elements.  Finally, the following are equivalent:

\begin{itemize}
\item [(a)]  $M^2$ is universal,

\item [(b)]  n=m, and

\item [(c)]  there are exactly $m$ nonassociate atoms in $M \backslash M^2$.
\end{itemize}

%  (|\overline{R}^{k+1}|-1)/(|\overline{R}|-1) no overline on the second piece?

\item [(7)]  $R$ cannot have exactly two nonassociate atoms in $M \backslash M^2$.  If either $R$ is atomic or $M \neq M^2$, $R$ cannot have exactly two nonassociate atoms.  If $M \neq M^2$ and $R$ is not a DVR, there are at least three nonassociate atoms in $M \backslash M^2$.  If further there is an atom in $M^2$, $R$ has at least six nonassociate atoms in $M \backslash M^2$ and hence $R$ has at least seven nonassociate atoms.  (In the next section we will see that if a local CK domain has an atom in $M^2$, then there must be at least eight nonassociate atoms.  In Section 4 we give an example of a local CK domain with eight nonassociate atoms having an atom in $M^2$.)
\end{itemize}
\end{thm}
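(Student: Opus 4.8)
The plan is to write $n$ for the number of nonassociate atoms of $R$ lying in $M \backslash M^2$ and to prove the four assertions by squeezing $n$ between the bounds already in hand. The engine behind everything is the interplay of two earlier items: part~(2), which says that once $n$ is finite and positive the ideal $M$ is finitely generated, and part~(6), which for finite $\overline{R}$ gives $n \geq m := (|\overline{R}|^{k}-1)/(|\overline{R}|-1)$ where $k = \dim_{\overline{R}} M/M^2$. Finite generation of $M$ is exactly what lets Nakayama's lemma enter, and that is the decisive extra ingredient beyond the raw counting estimates; it is what collapses the troublesome case $k=1$.

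First I would establish that $R$ cannot have exactly two nonassociate atoms in $M \backslash M^2$. If $\overline{R}$ is infinite, part~(5) already forces $n \in \{0,1,\infty\}$, so I may assume $\overline{R}$ finite and, for contradiction, $n = 2$. Then $M \neq M^2$, so $k \geq 1$, and since $n$ is finite part~(2) makes $M$ finitely generated. If $k \geq 2$ then $m \geq |\overline{R}|+1 \geq 3$, contradicting $n=2$ through part~(6). If $k = 1$, then $M = aR + M^2$ for any $a \in M \backslash M^2$, so $M(M/aR) = M/aR$ with $M/aR$ finitely generated; Nakayama gives $M = aR$ principal, whence part~(3) yields a single atom, i.e. $n=1$, again absurd. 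This rules out $n=2$.

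The remaining assertions are obtained by feeding this fact into parts~(3) and~(4). For the non-existence of exactly two nonassociate atoms when $R$ is atomic or $M \neq M^2$: if $R$ is atomic the two atoms generate $M$, so $M$ is finitely generated and $M = M^2$ would force $M = 0$ by Nakayama; hence $M \neq M^2$ in either case. Then $n \geq 1$ and $n \neq 2$ by the previous step, so $n = 1$ (otherwise the total already exceeds two); but $n=1$ makes $M$ principal and, by part~(3), gives a unique atom overall, contradicting the assumed two. For the bound of at least three atoms in $M \backslash M^2$ when $M \neq M^2$ and $R$ is not a DVR, I again have $n \geq 1$ and $n \neq 2$, and the value $n=1$ is excluded because it makes $M$ principal (part~(3)), which together with the DVR-characterization of part~(4) would make $R$ a DVR. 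Finally, if in addition there is an atom $q \in M^2$: when $\overline{R}$ is infinite, Theorem~\ref{QLdoms}(4) already produces infinitely many nonassociate atoms in $M \backslash M^2$; when $\overline{R}$ is finite I may assume $n$ finite, so $M$ is finitely generated, and the same Nakayama argument excludes $k=1$ (which would make $M$ principal and leave no atom in $M^2$). Thus $k \geq 2$, and part~(6) gives $n \geq m|\overline{R}| \geq (|\overline{R}|+1)|\overline{R}| \geq 6$, the atom in $M^2$ pushing the total to at least seven.

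The step I expect to be the real obstacle is the exclusion of $n=1$ in the ``at least three'' assertion. Counting alone cannot dispose of it, since $n=1$ forces only that $M$ is principal (part~(3)), and a quasilocal domain with principal maximal ideal need not be a DVR (a rank-two valuation domain has principal maximal ideal, a single atom, and is not a DVR). The implication $n=1 \Rightarrow R$ a DVR genuinely requires atomicity via part~(4)(b), so the hypothesis ``$R$ is not a DVR'' must be used precisely in concert with that characterization rather than combinatorially. The clean way through, which I would emphasize, is that reducing to a finitely generated $M$ by part~(2) and then invoking Nakayama is exactly what makes every $\dim_{\overline{R}} M/M^2 = 1$ scenario degenerate to the principal case, so that the only substantive arithmetic is the elementary estimate $m|\overline{R}| \geq (|\overline{R}|+1)|\overline{R}| \geq 6$ for $k\geq 2$.
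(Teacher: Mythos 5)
Your proposal proves only item (7) of the theorem: parts (1)--(6) are used throughout as facts ``already in hand,'' but they are themselves part of the statement being proved and carry most of the work in the paper's proof (the Nakayama arguments behind (2)--(4), the subspace counting behind (5)--(6)), so as a proof of the theorem as stated the proposal is incomplete in scope. Within (7), your arguments for the first claim (no exactly two nonassociate atoms in $M \backslash M^2$), the second claim (no exactly two nonassociate atoms when $R$ is atomic or $M \neq M^2$), and the final claim (at least six atoms in $M \backslash M^2$, hence seven in $R$, when there is an atom in $M^2$) are correct, and the second is handled by a genuinely different route: the paper argues in the atomic case that $p+q$ must have an atomic factor associate to neither $p$ nor $q$, and in the case $M \neq M^2$ produces a third atom $p+q$ via Theorem \ref{QLdoms}(4), whereas you force $M$ to be finitely generated, apply Nakayama to make $M$ principal, and then invoke (3). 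Both are valid; the paper's is more elementary, yours makes the reliance on finite generation explicit.

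The genuine gap is the third claim: that $M \neq M^2$ and $R$ not a DVR imply at least three nonassociate atoms in $M \backslash M^2$. You exclude $n=1$ by asserting that $M$ principal ``together with the DVR-characterization of part (4) would make $R$ a DVR,'' but (4)(b) requires $R$ to be atomic, which is not a hypothesis here; your closing paragraph concedes exactly this and never supplies the missing step. The concession is fatal rather than cosmetic: the rank-two valuation domain you yourself cite (value group $\Z \oplus \Z$ ordered lexicographically) is quasilocal with principal maximal ideal, satisfies $M \neq M^2$, is not a DVR, and has exactly one atom up to associates, so it meets every hypothesis of the third claim while violating its conclusion. No argument can therefore close your gap; the claim becomes correct only if ``$R$ is not a DVR'' is replaced by ``$M$ is not principal'' (or if atomicity is added), and with that repair your counting ($n \geq 1$, $n \neq 1$ by (3), $n \neq 2$ by the first claim) does go through. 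Note that the paper's own proof of (7) is silent on this claim as well --- it derives the first claim from (5) and (6), proves the second directly, and cites (6) only for the last --- so you have in effect uncovered a flaw in the paper's statement, but your proposal as written still leaves that assertion unproven.
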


\begin{proof}
(1) Clear since an element of $M \backslash M^2$ is an atom.  (2) Now suppose that $a_1,\ldots,a_n$ ($n \geq 1)$ is a complete set of nonassociate atoms in $M \backslash M^2$.  Then $M = (a_1) \cup \cdots \cup (a_n) \cup M^2 = (a_1,\ldots,a_n) \cup M^2$.  Since $n \geq 1$, $M \neq M^2$, so $M = (a_1,\ldots,a_n)$.  (3) ($\implies$)  Clear ($\impliedby$)  By Theorem \ref{QLdoms}, $\text{dim}_{\overline{R}} M/M^2 = 1$, so $M = (a) + M^2$ for some $a \in M \backslash M^2$.  By (2), $M$ is finitely generated, so $M = (a)$ by Nakayama's Lemma.  (4) (a) $\implies$ (b) $\implies$ (c)  Clear.  (c) $\implies$ (d)  By (2), $M$ is finitely generated.  Then $\text{dim}_{\overline{R}} M/M^2 = 1$ gives $M$ is principal and hence $R$ has only one atom up to associates.  (d) $\implies$ (e)  Suppose $a$ is the only atom of $R$ up to associates.  Then $M = (a)$, so $a \notin M^2$.  (e) $\implies$ (a)  Let $a$ be the atom in $M \backslash M^2$.  As we have seen $M = (a)$.  So $a$ is the only atom of $R$ up to associates.  Hence every nonzero nonunit of $R$ has the form $u a^n$ where $u$ is a unit and $n \geq 1$.  Thus $R$ is a DVR.

(5)  Suppose that $\overline{R}$ is infinite.  If $\text{dim}_{\overline{R}} M/M^2 > 1$, then $M/M^2$ has infinitely many one-dimensional subspaces and hence there are infinitely many nonassociate atoms in $M \backslash M^2$ by Theorem \ref{QLdoms} (3).  If $\text{dim}_{\overline{R}} M/M^2 = 1$ and there are only finitely many nonassociate atoms in $M \backslash M^2$, then $M$ is principal since $M$ is finitely generated by (2).  So up to associates there is one atom in $M \backslash M^2$.  If $\text{dim}_{\overline{R}} M/M^2 = 0$, $M = M^2$ and there are no atoms in $M \backslash M^2$.  The last statement is now immediate.

(6)   Suppose that $\overline{R}$ is finite.  The statements concerning the cases when $\text{dim}_{\overline{R}} M/M^2$ is infinite or $0$ are clear.  So suppose that $1 \leq \text{dim}_{\overline{R}} M/M^2 = k < \infty$.  Then $M/M^2$ has $m:= (|\overline{R}|^k-1) / (|\overline{R}|-1)$ one-dimensional $\overline{R}$-subspaces.  Thus by Theorem \ref{QLdoms} (3), there are at least $m$ nonassociate atoms in $M \backslash M^2$.  Suppose there is an atom $q$ in $M^2$.  Let $x_1,\ldots,x_m$ be the $m$ atoms corresponding to the one-dimensional subspaces of $M/M^2$ and $u_1,\ldots,u_{|\overline{R}|}$ be a complete set of representatives of $\overline{R}$.  Then by Theorem \ref{QLdoms} (4), the elements $x_i + u_j q$, $i = 1,\ldots,m$, $j=1,\ldots,|\overline{R}|$ are $m |\overline{R}|$ nonassociate atoms in $M \backslash M^2$.  Thus there are at least $m |\overline{R}|+1 = (|\overline{R}|^{k+1}-1)/(|\overline{R}|-1)$ nonassociate atoms.  Suppose $n<\infty$, then $M$ is finitely generated.  Now $M$ can be generated by $k$ elements and $(|\overline{R}|^{k}-1)/(|\overline{R}|-1) \leq n$.  Hence $k \leq \floor{\text{log}_{|\overline{R}|} \; n}+1$.  The statement concerning the universality of $M^2$ follows from Theorem \ref{QLdoms} (5).  (Most of (5) is given in \cite{CK} for the case where $R$ is a local CK domain.  However, the hypothesis that $R$ is atomic is not needed.)

(7)  The fact that there cannot be exactly two nonassociate atoms in $M \backslash M^2$ follows from (5) and (6).  Suppose that $R$ has exactly two nonassociate atoms $p$ and $q$.  First suppose that $R$ is atomic.  Then $p+q$ must have an atomic factor not an associate of $p$ or $q$, a contradiction.  Next assume that $M \neq M^2$.  So there is an atom in $M \backslash M^2$, say $p \in M \backslash M^2$.  If $q \in M \backslash M^2$ we contradict the first statement of (7).  So $q \in M^2$.  But then by Theorem \ref{QLdoms} (4), $p+q$ is a third atom.  The last statement follows from (6).
\end{proof}

However, as the following example shows, it is quite possible to have infinitely many nonassociate atoms in $M \backslash M^2$ where $(R,M)$ is a quasilocal domain with $\overline{R}=R/M$ finite and $\text{dim}_{\overline{R}} M / M^2 = 1$.

\begin{ex}
Let $(V,N)$ be a quasilocal domain with nonzero idempotent maximal ideal $N$ (e.g., $V$ is a valuation domain with nonprincipal maximal ideal).  Let $R = V[[X]]$.  So $R$ is a quasilocal domain with maximal ideal $M=(X,N)$ and $\overline{R}=R/M=V/N$.  Here $\text{dim}_{\overline{R}} M/M^2 = 1$, but $M$ is not principal and there are infinitely many nonassociate atoms in $M \backslash M^2$ (since for $n,n' \in N, n+X \sim x'+X \implies n \sim n'$).  By choosing $V/N$ to be finite, we even have that $\overline{R}$ and $M / M^2$ are finite.
\end{ex}

We have given a lower bound, the cardinality of the set of one-dimensional $\overline{R}$-subspaces of $M/M^2$, for the number of nonassociate atoms in $M \backslash M^2$.  We next give an upper bound, the cardinality of the set of one-dimensional $\overline{R}$-subspaces of $M^{n-1} / M^n$, for the number of nonassociate atoms in $M^{n-1} \backslash M^n$ where $M^n$ is universal.

\begin{thm} \label{Upper}
Let $(R,M)$ be a quasilocal domain, $\overline{R}=R/M$, and $n \geq 2$.  Let $x,y \in M^{n-1} \backslash M^n$.

\begin{itemize}
\item[(1)]  If $x \sim y$, then $\overline{R} \overline{x} = \overline{R} \overline{y}$.

\item[(2)]  Suppose that $y$ is an atom and $M^n$ is universal.  If $\overline{R} \overline{x} = \overline{R} \overline{y}$, then $x \sim y$.

\item[(3)]  Suppose that $M^n$ is universal.  Then two atoms $x,y \in M^{n-1}$ are associates if and only if $\overline{R} \overline{x} = \overline{R} \overline{y}$.

\item[(4)]  Suppose that $M^n$ is universal.  Let $\alpha$ be the cardinality of the set of one-dimensional $\overline{R}$-subspaces of $M^{n-1} / M^n$.  Then there are at most $\alpha$ nonassociate atoms in $M^{n-1} \backslash M^n$.  Hence if $\overline{R}$ and $l := \text{dim}_{\overline{R}} \; M^{n-1} / M^n$ are finite, there are at most $(|\overline{R}|^l -1) / (|\overline{R}|-1)$ nonassociate atoms in $M^{n-1} \backslash M^n$. For $n \geq 3$, there are at most $(|\overline{R}|^l-1) / (|\overline{R}|-1)-1$ nonassociate atoms in $M^{n-1} \backslash M^n$.
% \item[(5)]  Suppose that $M^n$ is universal.  If there are only finitely many nonassociate atoms in $M \backslash M^2$, then there are only finitely many nonassociate atoms in $M^{n-1} \backslash M^n$.

\item[(5)]  Suppose that $M^n$ is universal.  Let $\{x_{\alpha}\}_{\alpha \in \Lambda}$ be a complete set of representatives for the one-dimensional $\overline{R}$-subspaces of $M^{n-1} / M^n$.  Then each $x_{\alpha}$ is an atom if and only if $n=2$ or $M^{n-1}=M^n$.
\end{itemize}
\end{thm}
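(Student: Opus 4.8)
The plan is to prove the two implications separately, with essentially all of the work in the converse. For the direction $(\Leftarrow)$, if $M^{n-1}=M^n$ then $M^{n-1}/M^n=0$, so the indexing set $\Lambda$ is empty and the assertion holds vacuously; and if $n=2$ then each $x_\alpha$ lies in $M^{n-1}\backslash M^n=M\backslash M^2$, which is an atom by Theorem \ref{QLdoms}(1). So this direction requires no real argument.

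For $(\Rightarrow)$ I would argue contrapositively: assume $n\geq 3$ and $M^{n-1}\neq M^n$, and exhibit a \emph{reducible} element of $M^{n-1}\backslash M^n$. Since $n-2\geq 1$ we have $M^{n-1}=M\cdot M^{n-2}$, and $M^{n-1}\not\subseteq M^n$ produces an element $\sum_i a_i b_i\notin M^n$ with $a_i\in M$ and $b_i\in M^{n-2}$. As $M^n$ is closed under addition, at least one summand satisfies $ab:=a_i b_i\notin M^n$. Both $a$ and $b=b_i\in M^{n-2}\subseteq M$ are nonunits, and $ab\neq 0$ because $0\in M^n$ while $ab\notin M^n$; hence $ab\in M^{n-1}\backslash M^n$ is a nonzero nonunit carrying a nontrivial factorization into two nonunits, so $ab$ is not an atom.

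The decisive step is to transfer this reducibility onto the \emph{prescribed} representative, and I expect it to be the main obstacle. Let $x_\beta$ be the representative of the one-dimensional subspace $\overline{R}\,\overline{ab}$ of $M^{n-1}/M^n$. By hypothesis $x_\beta$ is an atom, and $ab,x_\beta\in M^{n-1}\backslash M^n$ span the same line, so Theorem \ref{Upper}(2) gives $ab\sim x_\beta$; thus $ab$ is an associate of an atom and hence an atom, contradicting the previous paragraph. The subtlety is exactly that a priori the reducible element $ab$ need not be the chosen $x_\beta$ but only another representative of its line, and it is precisely here that universality is indispensable: without the hypothesis that $M^n$ is universal, two elements of $M^{n-1}\backslash M^n$ lying on a common line of $M^{n-1}/M^n$ need not be associates, and reducibility could not be read off the line's representative. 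Part (2) packages exactly this use of universality, so once it is invoked the contradiction is immediate and the proof closes.
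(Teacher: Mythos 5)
Your treatment of part (5) is correct, and it is essentially the paper's own argument run in contrapositive form: the paper assumes every $x_{\alpha}$ is an atom, uses part (2) to conclude that then \emph{every} element of $M^{n-1} \backslash M^n$ is an atom, and then, for $n > 2$, shows $xM^{n-2} \subseteq M^n$ for all $x \in M$ (since otherwise some product $xm$ would be a non-atom lying in $M^{n-1} \backslash M^n$), whence $M^{n-1} = M^n$. You instead start from $M^{n-1} = M M^{n-2} \not\subseteq M^n$, extract a single reducible product $ab \in M^{n-1} \backslash M^n$ by picking a summand outside $M^n$, and use part (2) to transfer its reducibility to the chosen representative $x_{\beta}$ of its line. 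The two ingredients --- a product of nonunits surviving outside $M^n$, and part (2) as the device making atomicity constant on the lines of $M^{n-1}/M^n$ --- are identical in both proofs; only the logical packaging differs.

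The gap is one of scope: the statement is the whole of Theorem \ref{Upper}, parts (1)--(5), and your proposal proves only part (5) while invoking part (2) as a black box. That is precisely the part that cannot be waved at. Parts (1), (3), and (4) are routine, exactly as in the paper: (1) is immediate since associates differ by a unit, which maps to a nonzero scalar of $\overline{R}$; (3) is the conjunction of (1) and (2); and (4) follows from (3) together with (5). But (2) is where universality actually does its work --- as you yourself emphasize --- and it needs an argument. For the record it is short: if $\overline{R}\overline{x} = \overline{R}\overline{y}$ with $y$ an atom, then $x - ry \in M^n$ for some $r \in R$; universality gives $M^n \subseteq Ry$, so $y \mid x - ry$ and hence $y \mid x$; writing $x = sy$, the factor $s$ cannot lie in $M$ (else $x \in M M^{n-1} = M^n$, contradicting $x \notin M^n$), so $s$ is a unit and $x \sim y$. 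With that supplied, your contrapositive for (5) completes a proof of the full theorem.
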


\begin{proof}
(1) Clear.  (2) Now $\overline{R} \overline{x} = \overline{R} \overline{y}$ gives $x -ry \in M^n$ for some $r \in R$.  Since $M^n$ is universal, $y | x-ry$, so $y | x$.  Since $x,y \in M^{n-1} \backslash M^n$, $x \sim y$.  (3)  This follows from (1) and (2).  (4) The first part follows from (3).  Suppose that $n \geq 3$.  Then by (5) not all of the one-dimensional $\overline{R}$-subspaces of $M^{n-1} / M^n$ can give rise to an atom.
% (5)  Suppose that there are only finitely many nonassociate atoms in $M \backslash M^2$.  If $M=M^2$, $M^{n-1} = M^n$, and the result holds.  So suppose that $M \neq M^2$.  If $\overline{R}$ is infinite, $M$ is principal.  Then $n=1$ and the result holds.  Suppose that $\overline{R}$ is finite.  Then $M$ is finitely generated so $\text{dim}_{\overline{R}} \; M^{n-1} / M^n$ is finite.  The result follows.

(5)  If $n=2$, then each $x_{\alpha} \in M \backslash M^2$ and hence is an atom.  If $M^{n-1}=M^n$, the result is obvious.  Conversely, suppose that each $x_{\alpha}$ is an atom.  Then for $y \in M^{n-1} \backslash M^n$, $\overline{R} \overline{y} = \overline{R} \overline{x_{\alpha}}$ for some $\alpha \in \Lambda$.  Hence by (2), $y \sim x_{\alpha}$ and hence is an atom.  Thus each element $y \in M^{n-1} \backslash M^n$ is an atom.  Suppose that $n > 2$.  Let $x \in M \backslash M^2$.  If $xM^{n-2} \not \subseteq M^n$, we have $xm \in M^{n-1} \backslash M^n$ for some $m \in M$.  But this is a contradiction since $xm$ is not an atom.  Thus $xM^{n-2} \subseteq M^n$ for each $x \in M$ and hence $M^{n-1} = M M^{n-2} \subseteq M^n$.  So $M^{n-1} = M^n$.
\end{proof}

Cohen and Kaplansky \cite{CK} showed that if $(R,M)$ is a local CK domain with precisely $n$ nonassociate atoms, then $M^{n-1}$ is universal.  We generalize this result in Theorem \ref{Weakly} (which does not require $R$ to be atomic).  Our proof of Theorem \ref{Weakly} is modeled after their proof.  But we first show that for $(R,M)$ a quasilocal (W)FA domain with $M \neq M^2$, some power of $M$ is (weakly) universal.  We also generalize the well known result that if $P$ is a principal prime ideal, then $Q = \bigcap_{n=1}^{\infty} P^n$ is prime and there are no prime ideals properly between $P$ and $Q$.

\begin{thm} \label{Uni}
Let $(R,M)$ be a quasilocal domain and let $P = \bigcap_{n=1}^{\infty} M^n$.

\begin{itemize}
\item[(1)]  Suppose that $M \neq M^2$.  Then $R$ is a (W)FA domain if and only if either (a) $M$ is principal, or (b) $R/M$ is finite, $M$ is finitely generated, and some power of $M$ is (weakly) universal.

\item[(2)]  If $R$ is a WFA domain with $M \neq M^2$, then there can be no atoms in $P$ and each nonzero nonunit of $R$ has an atom as a factor.  For $a \in M \backslash M^2$, $(a)$ is $M$-primary.  Thus if $R$ is Noetherian, dim $R=1$.

\item[(3)]  Let $R$ be an FA domain.  Then $R/P$ is a field or CK domain and hence $P$ is prime and there are no prime ideals properly between $P$ and $M$.  If $M \neq M^2$ and either dim $R=1$ or $R$ is completely integrally closed, $R$ is a CK domain.
\end{itemize}
\end{thm}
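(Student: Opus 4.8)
The plan is to prove (1) first and then read off (2) and (3); write $\overline R=R/M$ and $P=\bigcap_{n\ge1}M^n$. The backward direction of (1) is the cleaner half. If $M$ is principal, \ref{QLatom}(3) gives a unique atom up to associates, so $R$ is FA and \emph{a fortiori} WFA. If instead $\overline R$ is finite and $M$ is finitely generated, then every $M^j/M^{j+1}$ is a finite $\overline R$-space and hence each $R/M^t$ is a finite ring; assuming some $M^t$ is (weakly) universal, I would show that sending an atom (resp.\ an atom of $M\backslash M^2$) to its class in $R/M^t$ separates associate classes: if atoms $x,y$ satisfy $x-y\in M^t$, universality gives $x-y=rx$, and since $x\notin M^t$ (a universal $M^t$ contains no atom, and an atom of $M\backslash M^2$ avoids $M^2\supseteq M^t$) we get $r\in M$, so $y=(1-r)x\sim x$. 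As $R/M^t$ is finite this bounds the number of atom classes, giving (W)FA.

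For the forward direction, suppose $R$ is (W)FA with $M\ne M^2$. Then $M\backslash M^2$ contains an atom, so $M$ is finitely generated by \ref{QLatom}(2); if $\overline R$ is infinite, \ref{QLatom}(5) forces $M$ principal, which is alternative (a). The heart of the theorem is the case $\overline R$ finite, where I must build a (weakly) universal power out of finiteness of the atom set. My approach is to pass to $A=\mathrm{gr}_M R=\bigoplus_{j\ge0}M^j/M^{j+1}$, a finitely generated graded algebra over the finite field $\overline R$, hence Noetherian with finite graded pieces. Finiteness of the atom set should force $\dim A\le1$: a two-dimensional free direction lets one attach to a fixed $a\in M\backslash M^2$ higher-order terms that cannot be absorbed into $Ra$, producing infinitely many nonassociate atoms on a single line of $M/M^2$ (the mechanism behind the Example following \ref{QLatom}). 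Granting $\dim A\le1$, for an atom $a$ whose leading form misses the finitely many one-dimensional minimal primes of $A$ the quotient $A/(a^{*})$ is zero-dimensional, so $a^{*}A_j=A_{j+1}$ for $j\gg0$, i.e.\ $M^{j+1}=aM^j+M^{j+2}$. \textbf{The main obstacle} is upgrading this to the honest containment $M^{j+1}\subseteq Ra$: iterating only yields $M^{j+1}\subseteq Ra+M^s$ for all $s$, so one needs the Krull-intersection identity $\bigcap_s(Ra+M^s)=Ra$ (equivalently $P\subseteq Ra$), which is automatic when $R$ is Noetherian but here must be wrung from finiteness of the atom set; the finitely many exceptional lines must then be handled separately, for which I would deploy the $x_\alpha+u_\beta q$ device of \ref{QLdoms}(4).

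Granting (1), part (2) is short. No atom $q$ lies in $P$: choosing an atom $a\in M\backslash M^2$ (possible as $M\ne M^2$) and a weakly universal $M^t$, we have $q\in P\subseteq M^t\subseteq Ra$, so $q\sim a$, contradicting $q\in M^2$, $a\notin M^2$. Every nonzero nonunit off $P$ is a finite product of atoms by descent on the order function $\phi$ of \ref{BFDgen} ($\phi$ strictly drops under proper factorization and is finite off $P$), while any element of $P$ is divisible by $a$ because $P\subseteq Ra$; hence every nonzero nonunit has an atom factor. For $a\in M\backslash M^2$, weak universality gives $M^t\subseteq(a)$, so $\sqrt{(a)}=M$ and $(a)$ is $M$-primary; if $R$ is Noetherian the principal ideal theorem then forces $\mathrm{ht}\,M=1$, i.e.\ $\dim R=1$.

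For (3), if $M=M^2$ then $P=M$ and $R/P=\overline R$ is a field. Otherwise $\overline M:=M/P$ satisfies $\bigcap_n\overline{M}^{n}=0$ and $\overline M\ne\overline M^{2}$. I would first prove $P$ prime (and that no prime lies strictly between $P$ and $M$) by generalizing the argument for $\bigcap(p)^n$ with $p$ a principal prime: reduce $ab\in P,\ a\notin P$ to the claim that a product of atoms never lies in $P$, supplying the needed cancellation from universality. This is again delicate in the non-Noetherian case and is the natural companion to the obstacle in (2); indeed it is cleanest to establish simultaneously that the integral closure $R'$ (equal to $[M:M]$ in good cases) is a valuation domain, extending the computation in the proof of \ref{QLdoms}(6), so that $P$ becomes the contraction of a prime of a valuation overring. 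Once $P$ is prime, $R/P$ is a quasilocal domain with $\bigcap\overline{M}^{n}=0$, hence a BFD and atomic by \ref{BFDgen}; its residue field is $\overline R$, $\overline M$ is finitely generated, and the image of a universal $M^t$ is universal in $R/P$ (an atom of $R/P$ lifts, via the atomic factorization above, to a unit multiple of the image of an atom of $R$), so the backward direction of (1) applied to $R/P$ shows it is FA, whence $R/P$ is a CK domain. Finally, suppose $M\ne M^2$. If $\dim R=1$ then the prime $P\subsetneq M$ must be $0$; and if $R$ is completely integrally closed then for an atom $a$ and any $0\ne w\in P$ we have $w\in M^{nt}\subseteq Ra^{n}$ for all $n$, so $w(1/a)^{n}\in R$ for all $n$, making $1/a$ almost integral over $R$ and hence a member of $R$, forcing $a$ to be a unit, which is absurd. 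In either case $P=0$, so $R=R/P$ is itself a CK domain.
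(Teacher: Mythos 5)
Your proposal has two genuine gaps, both of which you flag yourself but neither of which you close, and they sit at the load-bearing points of the theorem. The first is the forward direction of (1), which is the heart of the result and on which your proofs of (2) and (3) depend (they consume the weakly universal power $M^t$ and the containment $M^l\subseteq(a)$ that only (1)(forward) supplies). Your graded-ring strategy leaves two things unproven: that finiteness of the atom set forces $\dim\,\mathrm{gr}_M R\le 1$ (you only gesture at a mechanism), and --- the obstacle you name --- the passage from $M^{j+1}\subseteq Ra+M^s$ for all $s$ to $M^{j+1}\subseteq Ra$, i.e.\ the identity $\bigcap_s(Ra+M^s)=Ra$, which genuinely fails in non-Noetherian quasilocal domains and cannot simply be ``wrung from finiteness'' without a new argument. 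The paper avoids all of this with a short, elementary device you did not find: writing $M$ as a finite irredundant union of ideals and invoking McCoy's theorem. In the WFA case, $M=(a_1)\cup\cdots\cup(a_n)\cup M^2$ where $a_1,\ldots,a_n$ are the nonassociate atoms in $M\backslash M^2$, and McCoy's theorem immediately yields $M^l\subseteq(a_1)\cap\cdots\cap(a_n)$ for some $l$; in the FA case one first uses the WFA conclusion to get $P\subseteq(a_1)$ (after relabeling) and that every element of $M$ has an atom factor, so $M=(a_1)\cup\cdots\cup(a_n)$, and McCoy applies again. (Your proof of the backward direction of (1) is fine and is essentially the paper's counting argument in $R/M^l$.)

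The second gap is the primality of $P$ in (3). You reduce it to ``a product of atoms never lies in $P$'' and propose to establish this by showing $[M:M]$ or $R'$ is a valuation domain, but you concede this is ``delicate'' and never carry it out; note that the order function is only superadditive, so a product of elements of finite order can a priori land in $P$, and nothing in your sketch excludes this. The paper takes a completely different route: every nonzero nonunit of $R/P$ is a unit times a power product of the images of the finitely many atoms, so by the cited structure theorem \cite[Theorem 1]{A}, $R/P$ is a finite ring, an SPIR, or a CK domain; since $M$ is finitely generated its powers properly descend, so $R/P$ is not Artinian, forcing $R/P$ to be a CK domain --- in particular a domain, which is what makes $P$ prime. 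Your handling of the remaining pieces (no atoms in $P$, $(a)$ being $M$-primary, the $\dim R=1$ and completely integrally closed cases, and the deduction that $R/P$ is CK once $P$ is known to be prime) is correct, but all of it is conditional on the two unproven steps above.
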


\begin{proof}
(1) ($\impliedby$)  If $M$ is principal, then $R$ is a FA domain and $M$ is universal.  So suppose that $R/M$ is finite, $M$ is finitely generated, and $M^l$ is (weakly) universal.  Since $R/M$ is finite and $M$ is finitely generated, $R/M^l$ is finite.  Thus there are only finitely many principal ideals $(a) \supseteq M^l$.  Hence $R$ is a (W)FA domain.  ($\implies$)  Since $M \neq M^2$, there is at least one atom in $M \backslash M^2$.  By Theorem \ref{QLatom} (2), $M$ is finitely generated.  If $R/M$ is infinite, Theorem \ref{QLatom} (5) gives that $M$ is principal.  So suppose that $M$ is not principal.  Then $R/M$ is finite.  Let $a_1,\ldots,a_n$ be a complete set of nonassociate atoms (in $M \backslash M^2$).  For the WFA case, $M = (a_1) \cup \cdots \cup (a_n) \cup M^2$ is an irredundant union.  So by McCoy's Theorem \cite{M}, there exists an $l$ with $M^l \subseteq (a_1) \cap \cdots \cap (a_n) \cap M^2 \subseteq (a_1) \cap \cdots \cap (a_n).$  So $M^l$ is weakly universal.  Now consider the FA case.  Since $R$ is a WFA domain with $M \neq M^2$, for $a \in M \backslash M^2$, some $M^l \subseteq (a)$.  So without loss of generality we can assume that $\bigcap_{n=1}^{\infty} M^n \subset (a_1)$.  Now each element of $M^i \backslash M^{i+1}$ for $i \geq 1$ is a finite product of atoms.  Hence $M = (a_1) \cup \cdots \cup (a_n)$ and this union is irredundant.  So again by McCoy's Theorem, some $M^k \subseteq (a_1) \cap \cdots \cap (a_n)$.  So $M^k$ is universal.

(2) The first statement follows since some $M^l$ is weakly universal so each nonzero element of $P$ has an atom as a proper factor and since each element of $M^n \backslash M^{n+1}$ is a finite product of atoms. Let $a \in M \backslash M^2$.  We noted in the proof of (1) that some $M^l \subseteq (a)$.  Thus (a) is $M$-primary.  If $R$ is Noetherian, the Principal Ideal Theorem gives that dim $R=1$.

(3) We can assume that $M \neq M^2$.  Since $M$ is finitely generated, the powers of $M$ properly descend.  Thus $\overline{R} = R/P$ is not Artinian.  Let $x_1, \ldots, x_n$ be a complete set of nonassociate atoms of $R$.  Then every nonzero nonunit of $\overline{R}$ is a unit of $\overline{R}$ times a power-product of the $\overline{x_i}$'s.  By \cite[Theorem 1]{A} $\overline{R}$ is either a finite ring, SPIR, or CK domain.  Since in the first two cases $\overline{R}$ is Artinian, we must have that $\overline{R}$ is a CK domain.  Thus $P$ is prime and there are no prime ideals properly between $P$ and $M$.  Suppose that $M \neq M^2$.  If dim $R=1$, $P=0$ and $R$ is a CK domain.  Suppose that $R$ is completely integrally closed.  Then for $a \in M$, $\bigcap_{n=1}^{\infty} (a^n)=0$.  Let $a \in M \backslash M^2$, so some $M^l \subseteq (a)$.  Then $P = \bigcap_{n=1}^{\infty} M^n \subseteq \bigcap_{n=1}^{\infty} (a^n)=0$.  Thus $R$ is a CK domain (even a DVR).  %  Suppose that $M \neq M^2$ before If dim R = 1?
\end{proof}

\begin{rem}
Of course it is quite possible for a (W)FA domain $(R,M)$ to have $\bigcap_{n=1}^{\infty} M^n \neq 0$.  Let $(V,M)$ be a valuation domain, so either $M=M^2$ or $M$ is principal, and $\bigcap_{n=1}^{\infty} M^n = 0$ $\iff$ $V$ is a DVR.  We remark that we know of no quasilocal atomic domain $(R,M)$ with $M=M^2$.
\end{rem}

\begin{thm} \label{Weakly}
Let $(R,M)$ be a quasilocal domain.

\begin{itemize}
\item[(1)]  Suppose that there are exactly $n$, $0 \leq n < \infty$ nonassociate atoms in $M \backslash M^2$.  Then $M^n$ is weakly universal.

\item[(2)]  Suppose that $M \neq M^2$ and that there are exactly $2 \leq n < \infty$ nonassociate atoms in $R$.  Then $M^{n-1}$ is universal.
\end{itemize}
\end{thm}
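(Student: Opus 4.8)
The plan is to adapt the Cohen--Kaplansky idea in the cleanest possible form: to force a prescribed atom to divide everything in the relevant power of $M$, assume it does not, extract a single offending product of elements of $M$, and manufacture from its partial products more pairwise nonassociate atoms than the hypothesis allows.

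For (1), first dispose of $n=0$ (then $M=M^2$ by Theorem \ref{QLatom}(1), so $M^0=R$ is weakly universal vacuously) and $n=1$ (then $M=(a)$ is principal by Theorem \ref{QLatom}(3), whence $M^1=(a)$). Assume $n\ge 2$, fix an atom $a\in M\backslash M^2$, and suppose $M^n\not\subseteq Ra$. Since $Ra$ is an ideal and $M^n$ is generated by products of $n$ elements of $M$, some product $y=m_1\cdots m_n$ ($m_i\in M$) satisfies $a\nmid y$. Put $c_k=m_1\cdots m_k$ and consider the $n+1$ elements $a$ and $e_k:=a+c_k$ for $1\le k\le n$. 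For $k\ge 2$ one has $c_k\in M^2$, so $e_k\in M\backslash M^2$ and is an atom by Theorem \ref{QLdoms}(1); the same holds for $e_1$ except in the single degenerate alignment $m_1\equiv -a \pmod{M^2}$, which I would sidestep by reordering the factors, or, if every factor is $\equiv -a$, by replacing $e_1$ with the atom $m_1$ itself.

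The crux of (1) is that these $n+1$ atoms are pairwise nonassociate, and here the computation is genuinely clean. If $e_j=u\,e_k$ with $1\le j<k$ and $u$ a unit, then $a(1-u)=u c_k-c_j$; writing $c_k=c_j\,w$ with $w=m_{j+1}\cdots m_k\in M$ gives $a(1-u)=c_j(uw-1)$, and since $uw\in M$ the factor $uw-1$ is a unit, so $c_j\in Ra$; as $c_j\mid y$ this yields $a\mid y$, a contradiction. A relation $a=u\,e_k$ collapses the same way to $c_k\in Ra$, hence $a\mid y$. Thus $a,e_1,\dots,e_n$ are $n+1$ pairwise nonassociate atoms of $M\backslash M^2$, contradicting the hypothesis of exactly $n$ such classes and proving $M^n\subseteq Ra$.

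For (2), split the $n$ atoms into the $n_1$ lying in $M\backslash M^2$ and the $n_2$ lying in $M^2$. For a target atom $a\in M\backslash M^2$, run the construction on a product $y=m_1\cdots m_{n-1}\in M^{n-1}$ with $a\nmid y$; this now yields only $n$ pairwise nonassociate atoms $a,e_1,\dots,e_{n-1}$, all in $M\backslash M^2$, so I need one more class. If $n_2\ge 1$, any atom $q\in M^2$ is nonassociate to all of these (an associate of an element of the ideal $M^2$ stays in $M^2$), giving $n+1$ classes and the contradiction. If $n_2=0$ the $n$ manufactured atoms exhaust all classes, yet they occupy at most the two lines $\langle\overline a\rangle$ and $\langle\overline a+\overline{m_1}\rangle$ of $M/M^2$; but Theorem \ref{QLdoms}(3) puts an atom in each of the $m=(|\overline R|^{\,k}-1)/(|\overline R|-1)$ lines ($k=\dim_{\overline R}M/M^2$), and each such atom is associate to a manufactured one, forcing $m\le 2$, hence $k=1$ since $|\overline R|\ge 2$, hence $M$ principal (Theorem \ref{QLatom}(2)--(3) with $\overline R$ finite and $M$ finitely generated by Theorem \ref{Uni}(1)), contradicting $n\ge 2$. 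I expect the remaining point --- divisibility of $M^{n-1}$ into the atoms $q\in M^2$ themselves, where the manufacturing construction is not tailored --- to be the main obstacle, since one must upgrade weak universality to full universality at the sharp exponent $n-1$; the natural tool is the family of auxiliary atoms $x_\alpha+u_\beta q\in M\backslash M^2$ furnished by Theorem \ref{QLdoms}(4), already known to be divided by $M^{n-1}$, from which $q\mid M^{n-1}$ must be teased out.
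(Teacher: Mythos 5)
Your part (1) is correct, and it is a genuinely pleasant variant of the paper's argument: where the paper writes $M=(a_1,\ldots,a_n)$, takes the offending product to be a product of the atoms themselves, and finishes with a pigeonhole count, you manufacture $n+1$ explicit pairwise nonassociate atoms $a,e_1,\ldots,e_n$ from an arbitrary product $y=m_1\cdots m_n$ of elements of $M$, with the same telescoping trick ($a(1-u)=c_j(uw-1)$, $uw-1$ a unit) doing the work. Your handling of the degenerate alignment $m_1\equiv -a \pmod{M^2}$ also goes through (one should record that $m_1\not\sim a$ and $m_1\not\sim e_k$, but both follow from the identical computation, each forcing $a\mid y$). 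Likewise, your part (2) argument correctly proves $M^{n-1}\subseteq Ra$ for every atom $a\in M\backslash M^2$: the count of $n$ manufactured atoms, the extra class coming from an atom in $M^2$ when $n_2\geq 1$, and the two-lines argument forcing $M$ principal when $n_2=0$ are all sound.

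The genuine gap is the one you flag yourself: atoms $q\in M^2$. Universality of $M^{n-1}$ demands $M^{n-1}\subseteq Rq$ for these $q$ as well, and this is exactly where ``universal'' exceeds ``weakly universal.'' Your construction breaks there because $q+c_k\in M^2$ need not be an atom, and your proposed repair via the auxiliary atoms $x_\alpha+u_\beta q$ of Theorem \ref{QLdoms}(4) is not carried out; knowing $M^{n-1}\subseteq R(x_\alpha+u_\beta q)\cap Rx_\alpha$ does not visibly yield $M^{n-1}\subseteq Rq$. The paper needs no case split precisely because its setup never requires the target atom to lie outside $M^2$: since the complete set of atoms generates $M$, an offending product for $a_1$ can be taken to be a product of atoms $x_1=a_{i_1}\cdots a_{i_{n-1}}$ (no factor an associate of $a_1$); each $y_k=a_1+x_k$ built from the tails $x_k=a_{i_k}\cdots a_{i_{n-1}}$ is a nonzero nonunit and hence has an atom factor by Theorem \ref{Uni}(2) (an FA domain with $M\neq M^2$ is WFA); that factor can be neither $a_1$ nor $a_{i_{n-1}}$ (the latter divides every $x_k$, so dividing $y_k$ would force it to divide $a_1$); so the $n-1$ elements $y_1,\ldots,y_{n-1}$ draw their atom factors from only $n-2$ classes, and pigeonhole plus the telescoping computation gives the contradiction. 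Your own front-product setup can be completed the same way --- give each $q+c_k$ an atom factor $b_k$ via Theorem \ref{Uni}(2), note $b_k\not\sim q$ (else $q\mid c_k\mid y$) and that $b_k$ is not associate to any atom factor of $m_1$ (else $b_k\mid c_k$, whence $b_k\mid q$), then pigeonhole --- but as written your proposal establishes only weak universality of $M^{n-1}$, not the universality asserted in (2).
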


\begin{proof}
Note that (1) is trivial for $n=0$. For $n=1$, $M$ is principal by Theorem \ref{QLatom} and (1) also holds.  So we can assume in both cases that $2 \leq n < \infty$.  Let $a_1, \ldots, a_n$ ($n \geq 2$) be a complete set of nonassociate atoms (in $M \backslash M^2$).  By the proof of Theorem \ref{QLatom} (2), $M = (a_1,\ldots,a_n)$; so $M^k = \left(\prod_{j=1}^k a_{i_j} \right)$.  Suppose that $M^{n-1}$ ($M^n$) is not (weakly) universal.  Without loss of generality we can assume that $a_1 \nmid \prod_{j=1}^{n-1} a_{i_j}$ $\left(a_1 \nmid \prod_{j=1}^n a_{i_j} \right)$.  Put $x_k := \prod_{j=k}^{n-1} a_{i_j} \left(\prod_{j=k}^n a_{i_j}  \right)$ for $k=1,\ldots,n-1$.  So $a_1 \nmid x_k$.  Set $y_k = a_1 + x_k$.  So $a_1 \nmid y_k$ and $a_{i_{n-1}} \nmid y_k$  In case (1) each $y_k \in M \backslash M^2$ and hence is an atom.  In case (2) each $y_k$ is divisible by an atom as noted in Theorem \ref{Uni}. Thus in either case (1) or (2) each of the $n-1$ elements $y_1,\ldots,y_{n-1}$ is divisible by one of the $n-2$ atoms $a_2,\ldots,a_{i_{n-2}}$.  So by the Pigeonhole Principle, we have $i,j$, $1 \leq i \leq j \leq n-1$ and $l$, $2 \leq l \leq i_{n-2}$ with $a_l | y_i,y_j$.  So $a_l | y_j - y_i = x_j (1 - x_i / x_j).$   Now $1-x_i/x_j$ is a unit, so $a_l | x_j$.  Hence $a_l | y_j - x_j = a_1$, a contradiction.
\end{proof}

Three remarks concerning Theorem \ref{Weakly} (2) are in order.  First $n-1$ may be the best possible.  For example $R = \text{GF}(2) + \text{GF}(2^2)[[X]]X$ (Example \ref{Wint}) (resp., $R = \text{GF}(2) + \text{GF}(2)[[X]]X^2$ (Example \ref{Corrected})) has $3$ (resp., $4$) nonassociate atoms and $M^2$ (resp., $M^3$) is universal while $M$ (resp., $M^2$) is not.  Second, a CK domain $(R,M)$ with $M^2$ universal can have an arbitrarily large number of nonassociate atoms.  So certainly $n-1$ need not be the least power of $M$ that is universal.  Third, the least power of $M$ that is universal can be arbitrarily large.  For each $n \geq 2$, Example \ref{AndMottEx} gives a local CK domain $(R_n,M_n)$ with $M_n^{2n}$ universal, but $M_n^{2n-1}$ not universal.

\section{Local CK Domains}

In this section we sharpen and offer alternative proofs for some of the results in \cite{CK} concerning the number of atoms in a local CK domain.  For the reader's convenience, we recall some characterizations of local CK domains.

\begin{thm} \label{CKequiv}
Let $(R,M)$ be a quasilocal domain and $\overline{R}=R/M$.  Then the following conditions are equivalent.

\begin{itemize}
\item[(1)]  $R$ is a CK domain.

\item[(2)]  Either $\overline{R}$ is infinite and $R$ is a DVR or $\overline{R}$ is finite and $R$ is a one-dimensional analytically irreducible local domain.

\item[(3)]  Either $\overline{R}$ is infinite and $R$ is a DVR or $\overline{R}$ is finite, $R'$ is a DVR and a finitely generated $R$-module where $R'$ is the integral closure of $R$.

\item[(4)]  Either $\overline{R}$ is infinite and $R$ is a DVR or $R$ is atomic (e.g., $R$ is Noetherian), $\overline{R}$ is finite, $M$ is finitely generated (e.g., $R$ is Noetherian), and some power of $M$ is universal.

\item[(5)]  $R$ is a one-dimensional local domain that is an FFD and has finite elasticity $\rho (R)$.

\item[(6)]  $R$ has group of divisibility $G(R) \cong \Z \oplus F$ where $F$ is finite.
\end{itemize}
\end{thm}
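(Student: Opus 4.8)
The plan is to prove the equivalences as a cycle: first tie together the two combinatorial descriptions (1) and (4) using the tools already built in Section 2, then bridge to the ring-theoretic conditions (2) and (3) through one-dimensional local ring theory, and finally import the factorization-theoretic conditions (5) and (6) from the literature.

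I would begin with (1) $\iff$ (4), which is essentially self-contained. If $R$ is a CK domain it is atomic, and I first claim $M \neq M^2$: with finitely many nonassociate atoms $x_1,\dots,x_n$ every nonzero element of $M$ is a unit times a product of the $x_i$, so $M = (x_1,\dots,x_n)$ is finitely generated, whence $M = M^2$ would force $M = 0$ by Nakayama's Lemma, contradicting that $R$ is not a field. With $M \neq M^2$ in hand, Theorem \ref{QLatom}(5) separates the case $\overline{R}$ infinite, where $M$ is principal and $R$ is a DVR by Theorem \ref{QLatom}(4), from the case $\overline{R}$ finite, where Theorem \ref{QLatom}(2) gives $M$ finitely generated and Theorem \ref{Uni}(1) ($\implies$, FA case) produces a universal power of $M$. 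Conversely a DVR is a CK domain, and in the finite-residue case Theorem \ref{Uni}(1) ($\impliedby$, FA case) shows $R$ is an FA domain, which together with atomicity gives (1).

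Next I would connect (1)/(4) to (3) and (2). Theorem \ref{Uni}(2) shows $(a)$ is $M$-primary for each $a \in M \backslash M^2$, so Krull's principal ideal theorem gives $\dim R = 1$. The core step is to promote the purely combinatorial data of finitely many atoms, a finite residue field, and a universal power $M^k \subseteq (a)$ to the statement that the integral closure $R'$ is a DVR and a finite $R$-module. Working along the tower $R \subseteq [M:M] \subseteq R'$ (compare Theorem \ref{QLdoms}(6),(7) for the case $k=2$), I would use Krull-Akizuki to make $R'$ a one-dimensional Noetherian integrally closed domain, then use finiteness of $\overline{R}$ together with the universal power to isolate a unique maximal ideal of $R'$ lying over $M$ and to force module-finiteness; the Eakin-Nagata theorem then returns the Noetherian property to $R$. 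The classical equivalence in dimension one between ``$R'$ a DVR finite over $R$'' and analytic irreducibility yields (3) $\iff$ (2), and the implication (2) $\implies$ (1) is the content of the Cohen-Kaplansky theorem \cite{CK}, which I would invoke.

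Finally, for (5) and (6) I would appeal to the factorization-theoretic literature. A DVR is an FFD of elasticity $\rho(R) = 1$ with group of divisibility $G(R) \cong \Z$, so both conditions subsume the infinite-residue case automatically; in the finite-residue case one shows that an FFD that is a one-dimensional local domain of finite elasticity admits only finitely many nonassociate atoms, and that the decomposition $G(R) \cong \Z \oplus F$ with $F$ finite records exactly that $R'$ is a DVR finite over $R$ with finite index of unit classes. The main obstacle is precisely the middle step: passing from ``finitely many atoms'' to the module-finiteness and analytic irreducibility of $R'$. This is where the finiteness of $\overline{R}$ and the existence of a universal power of $M$ do the essential work, and where completion and Krull-Akizuki arguments must replace the elementary counting of atoms that suffices for (1) $\iff$ (4).
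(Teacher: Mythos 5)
Your treatment of (1) $\iff$ (4) is sound and is essentially the paper's own route (the paper derives it from Theorem \ref{Uni}(1), with Theorem \ref{QLatom} supplying the residue-field dichotomy), and deferring (5) and (6) to the literature is also what the paper does: it cites \cite[Corollary 3.6]{AMO} for (6), and \cite[Corollary 6]{AMU} together with \cite[Theorem 2.12]{AA} for (2) $\iff$ (5), while for (1) $\iff$ (2) $\iff$ (3) it simply cites \cite[Theorem 4.3]{AMO}. The genuine gap is in your middle step, where you attempt to prove (1) $\implies$ (3)/(2) directly instead of citing it. Your argument is circular on the Noetherian property: you invoke Krull's principal ideal theorem to get $\dim R = 1$ and then Krull--Akizuki to control $R'$, but both results require $R$ to already be Noetherian (Krull--Akizuki moreover requires $R$ Noetherian of dimension one), and Noetherianness of $R$ is exactly what your chain is supposed to deliver only at the very end via Eakin--Nagata. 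Theorem \ref{Uni}(2) is careful on this point: it concludes $\dim R = 1$ only \emph{if} $R$ is Noetherian. Proving that a local CK domain is Noetherian is part of the substance of the Cohen--Kaplansky theorem; it can be done directly (every nonzero ideal $I$ contains a power of $M$, since any $0 \neq x \in I$ is a product of atoms and some $M^k$ is universal, and $R/M^j$ is finite because $M$ is finitely generated and $\overline{R}$ is finite, so $I$ is finitely generated), but some such argument must come \emph{before} any appeal to the principal ideal theorem or Krull--Akizuki, not after.

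Second, even granting Noetherianness, Krull--Akizuki only yields that $R'$ is Noetherian of dimension at most one and integrally closed; it never yields module-finiteness --- indeed Krull--Akizuki is precisely the tool one reaches for when module-finiteness is unavailable. Your plan to ``isolate a unique maximal ideal of $R'$ lying over $M$ and to force module-finiteness'' does not work as stated: there exist one-dimensional Noetherian local domains whose integral closure is a DVR (hence has a unique maximal ideal over $M$) but is not a finitely generated $R$-module; such domains are analytically ramified and are not CK domains. So unibranchedness does not force module-finiteness, and this is where the hypothesis of finitely many nonassociate atoms must be used in an essential way (for instance, by showing that if $R'$ fails to be a finitely generated $R$-module, then $R$ has infinitely many nonassociate atoms). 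This is the core content of \cite[Theorem 4.3]{AMO}; either supply that argument or cite it, as the paper does.
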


\begin{proof}
The equivalence of (1) - (3) may be found in \cite[Theorem 4.3]{AMO}.  (1) $\iff$ (4)  This follows from Theorem \ref{Uni} (1).  (1) $\iff$ (6) \cite[Corollary 3.6]{AMO}  (2) $\iff$ (5)  Let $(R,M)$ be a one-dimensional local domain.  Then $R$ is an FFD $\iff$ $R$ is a DVR or $\overline{R}$ is finite \cite[Corollary 6]{AMU}, and $\rho (R) < \infty$ $\iff$ $R$ is analytically irreducible \cite[Theorem 2.12]{AA}.
\end{proof}

Let $(R,M)$ be a local CK domain.  Since $R$ is analytically irreducible, the map $\theta: L(R) \to L(\hat{R})$ given by $\theta(I) = \hat{R} I$ where $L(R)$ (resp., $L(\hat{R})$) is the lattice of ideals of $R$ (resp., $\hat{R}$) and $\hat{R}$ is the $M$-adic completion of $R$, is a multiplicative lattice isomorphism.  So $R$ is a CK domain if and only if $\hat{R}$ is a CK domain and both the ideal structure and the factorization structure (up to units) of $R$ and $\hat{R}$ are identical.  Thus in the local case, very little is lost by assuming that $R$ is complete.  The following result \cite[Theorem 4.5]{AMO} characterizes complete local CK domains.

\begin{thm}
(1)  Let $F_0 \subseteq F$ be finite fields and let $n \geq 1$.  Suppose that $R$ is an integral domain with $F_0 + F[[X]]X^n \subseteq R \subseteq F[[X]]$.  Then $R$ is a complete local CK domain with residue field between $F_0$ and $F$.

Conversely, suppose that $(R,M)$ is a complete local CK domain with $R/M$ finite and char $R= \text{char} R/M$.  Let $F_0$ (resp., $F$) be a coefficient field for $R$ (resp., $R'$, the integral closure of $R$).  Then there exists an $n \geq 1$ with $F_0 + F[[X]]X^n \subseteq R \subseteq F[[X]]$.

% (2) Let $p > 0$ be prime.  Let $(S,Sp)$ be a Cohen algebra with quotient field $K$ and finite residue field.  Let $L$ be a finite field extension of $K$ and let $(D,D \pi)$ be the integral closure of $D$ in $L$, so $D$ is a complete DVR.  Let $n \geq 1$.  Let $R$ be a ring with $S + D \pi^n \subseteq R \subseteq D$.  Then $R$ is a complete local CK domain with integral closure $D$.

% Conversely, suppose that $R$ is a complete local CK domain, not a DVR, with char $R=0$ and (finite) residue field of characteristic $p > 0$.  Let $D$ be the integral closure of $R$, so $D$ is a complete DVR, say with maximal ideal $D \pi$.  Then there exists a Cohen algebra $(S,Sp)$ with $R$ (or equivalently, $D$) a finitely generated $S$-module and an $n \geq 1$ with $S + D \pi^n \subseteq R \subseteq D$.

(2)  Let $p > 0$ be prime and $\Z_p$ the $p$-adic integers and $\Q_p$ the field of rational $p$-adics, and let $L$ be a finite field extension of $\Q_p$.  Let $(\overline{\Z_p}, (\pi))$ be the integral closure of $\Z_p$ in $L$.  So $\overline{\Z_p}$ is a complete DVR.  Suppose that $R$ is an integral domain with $\Z_p + \pi^n \overline{\Z_p} \subseteq R \subseteq \overline{\Z_p}$ for some $n$.  Then $(R,M)$ is a complete local CK domain with char $R=0$ and char $R/M = p > 0$.

Conversely, suppose that $(R,M)$ is a complete local CK domain with char $R=0$ and $R/M$ finite with char $R/M = p > 0$.  Let $L$ be the quotient field of $R$ and $\overline{\Z_p}$ the integral closure of $\Z_p$ in $L$.  Then $\overline{\Z_p} = \overline{R}$ and there exists an $n \geq 1$ so that $\Z_p + \pi^n \overline{\Z_p} \subseteq R \subseteq \overline{\Z_p}$.
\end{thm}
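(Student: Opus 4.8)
The plan is to handle both parts by the same template, reducing everything to the characterization of CK domains in Theorem \ref{CKequiv}(3) together with Cohen's structure theorem for complete local rings. In each part write $S$ for the large ring: $S=F[[X]]$ in part (1) and $S=\overline{\Z_p}$ in part (2). In both cases $S$ is a complete DVR with finite residue field whose maximal ideal is generated by a uniformizer ($X$, respectively $\pi$). The whole theorem then comes down to two facts. First (the forward directions): an intermediate ring $R$ sandwiched between a coefficient subring ($F_0$, resp. $\Z_p$) plus a power of the maximal ideal of $S$ and $S$ itself has $S$ as its integral closure and is a complete CK domain. Second (the converse directions): the integral closure $R'$ of a complete local CK domain is exactly such an $S$, and the conductor $(R:R')$ forces the displayed lower containment.

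For the forward direction of (1), and identically of (2), I would first note that $\mathfrak{c}_0:=X^nS$ (resp. $\pi^nS$) is a nonzero ideal of $S$ contained in $R$, and that $S/\mathfrak{c}_0$ is finite because $S$ has finite residue field and $\mathfrak{c}_0$ has finite colength. Hence $S$ is generated as an $R$-module by finitely many coset representatives of $\mathfrak{c}_0$, so $S$ is module-finite, a fortiori integral, over $R$; since $S$ is a DVR it is integrally closed, whence $R'=S$. The residue field $R/M$ is squeezed between $F_0$ and $F$ (resp. between $\mathbb{F}_p$ and the residue field of $\overline{\Z_p}$) and is therefore finite. Theorem \ref{CKequiv}(3) then yields that $R$ is a CK domain. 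Finally $R$ is complete: it contains the open ideal $\mathfrak{c}_0$, so it is closed in the complete ring $S$, and the subspace topology coincides with the $M$-adic topology because $R\subseteq S$ is module-finite; thus $R$ is $M$-adically complete. The characteristic assertions are immediate from $F_0\subseteq R$ (resp. $\Z_p\subseteq R$).

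For the converse directions I would start from a complete local CK domain $(R,M)$ and pass to $R'$. By Theorem \ref{CKequiv}, $R'$ is a DVR, module-finite over $R$, with $R/M$ finite; module-finiteness over the complete ring $R$ makes $R'$ a complete DVR whose residue field, a finite extension of $R/M$, is finite. In the equicharacteristic case (char $R$ $=$ char $R/M$ $=p$), Cohen's structure theorem supplies a coefficient field $F$ for $R'$ and a uniformizer realizing $R'=F[[X]]$, and also a coefficient field $F_0\cong R/M$ for $R$; these satisfy $F_0\subseteq R\subseteq R'=F[[X]]$ and $F_0\subseteq F$. In the mixed-characteristic case, $R'$ is a complete DVR of residue characteristic $p$ and characteristic $0$, hence, having finite residue field, is module-finite over $\Z_p$ with fraction field $L$ a finite extension of $\Q_p$; being integrally closed, this identifies $R'=\overline{\Z_p}$. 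Here the copy of $\Z_p$ must be placed inside $R$, not merely $R'$: since $p\in M$, a $p$-adically Cauchy sequence of integers is $M$-adically Cauchy and so converges in the complete ring $R$, giving $\Z_p\subseteq R$. In either case the conductor $\mathfrak{c}=(R:R')$ is a nonzero ideal of the DVR $R'$, hence $\mathfrak{c}=X^nS$ (resp. $\pi^nS$) for some $n\ge 1$, and $\mathfrak{c}\subseteq R$ produces the containment $F_0+F[[X]]X^n\subseteq R$ (resp. $\Z_p+\pi^n\overline{\Z_p}\subseteq R$).

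The main obstacle is the use of Cohen structure theory in the converse directions, specifically the two places where a coefficient subring must be produced \emph{inside $R$ itself}. Obtaining a coefficient field $F_0\cong R/M$ in the equicharacteristic case, and a copy of $\Z_p$ in the mixed case, is exactly where completeness enters in an essential (non-formal) way; the containment $\Z_p\subseteq R$ and the identification $R'=\overline{\Z_p}$ (equivalently, that $L$ is finite over $\Q_p$) both rest on module-finiteness of $R'$ over $\Z_p$, which itself requires the residue field to be finite. The remaining steps—module-finiteness of $S$ over $R$, the identification of $R'$, and the conductor computation in a DVR—are routine.
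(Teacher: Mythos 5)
Your proposal is sound, but note first that this theorem is one the paper itself does \emph{not} prove: it is quoted verbatim as \cite[Theorem 4.5]{AMO}, so there is no in-paper argument to compare against. Judged on its own merits, your proof is correct and is almost certainly the intended one: the forward directions via ``the conductor-type ideal $X^nF[[X]]$ (resp.\ $\pi^n\overline{\Z_p}$) has finite colength, hence $S$ is module-finite over $R$ and $R'=S$,'' followed by Theorem \ref{CKequiv}((3)$\Rightarrow$(1)); the converses via Theorem \ref{CKequiv}((1)$\Rightarrow$(3)), Cohen structure theory, and the observation that the conductor $(R:R')$ is a nonzero ideal of the DVR $R'$, hence a power of the maximal ideal. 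Two small points you gloss over, neither fatal. First, before invoking Theorem \ref{CKequiv} in the forward direction you need $R$ to be \emph{quasilocal} (that is a hypothesis of that theorem); this follows from lying over in the integral extension $R\subseteq S$, since every maximal ideal of $R$ must then be $R\cap \mathfrak{m}_S$, but it should be said. Second, your parenthetical claim $F_0\subseteq F$ in the equicharacteristic converse is true but not free: it uses that $F$ is finite (perfect), so the coefficient field of $R'$ is the Teichm\"uller lift and contains every prime-to-$p$ root of unity of $R'$, in particular all of $F_0$; fortunately the stated conclusion $F_0+F[[X]]X^n\subseteq R\subseteq F[[X]]$ needs only $F_0\subseteq R$ and the conductor containment, so nothing depends on it. Your identification of where completeness is genuinely needed --- producing the coefficient field inside $R$, and the $M$-adic convergence argument embedding $\Z_p$ into $R$ in the mixed-characteristic case --- is exactly the crux, and your direct Cauchy-sequence construction of $\Z_p\hookrightarrow R$ is the right way to make sense of the statement ``the integral closure of $\Z_p$ in $L$'' in the first place.
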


Let $(R,M)$ be a local CK domain that is not a DVR.  Let $R'$ be the integral closure of $R$, so $R'$ is a DVR.  Since $M$ has grade one, $R \subsetneq M^{-1}$, so $R \subsetneq M^{-1} \subseteq [M:M] \subseteq R'$.  Since $R'$ is local, $[M:M]$ is as well.  Now $[M:M]$ local and $R \subsetneq [M:M]$ gives that $U(R) \subsetneq U([M:M])$.  So $V:= U([M:M]) / U(R)$ is a nontrivial subgroup of $U(R') /U(R)$ and $U(R') /U(R)$ is finite.  (The fact that $U(R')/U(R)$ (and hence $V$) is finite follows from \cite[Theorem 3.9]{GM}.  However, the fact that $V$ is finite also follows from the correspondence below.)  Fix $x \in M \backslash \{0\}$.  Then the set $\{R \sigma x \; | \; \sigma \in U([M:M])\}$ of principal ideals corresponds to the set $Vx$ via $R \sigma x \leftrightarrow \sigma x U(R)$.  But $\{R \sigma x | \sigma \in U([M:M])\}$ corresponds to a complete set of nonassociate elements of the form $\sigma x$ where $\sigma \in U([M:M])$.  Cohen and Kaplansky showed that $|V| \geq |\overline{R}|$ where $\overline{R} = R/M$.  (See the paragraph preceding \cite[Theorem 11]{CK}.)  We sharpen this result and give an alternative proof.

\begin{thm} \label{Vcard}
Let $(R,M)$ be a local CK domain that is not a DVR.  Let $\overline{R} = R/M$.  Then $|V| \geq |\overline{R}|$.  If $M$ is the maximal ideal of $[M:M]$, then $|V| \geq |\overline{R}|+1$.
\end{thm}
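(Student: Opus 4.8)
The plan is to compute $|V|=[U(S):U(R)]$ \emph{exactly} in terms of residue-field data, where I write $S:=[M:M]$ with maximal ideal $\mathfrak m$ (recall from the paragraph before the statement that $S$ is local and $R\subsetneq S$, since $R$ is not a DVR). First I record that $\mathfrak m\cap R=M$, so that $\overline R=R/M$ embeds as a subfield of $\overline S:=S/\mathfrak m$; both are finite, and I set $q:=|\overline R|$ and $d:=[\overline S:\overline R]\geq 1$, so $|\overline S|=q^{d}$. The reduction map $\phi\colon U(S)\twoheadrightarrow \overline S^{\,*}$ has kernel $1+\mathfrak m$, satisfies $\phi(U(R))=\overline R^{\,*}$, and $U(R)\cap(1+\mathfrak m)=1+M$. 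Using the subgroup $H:=U(R)(1+\mathfrak m)$ together with the second isomorphism theorem, I would factor
\[
|V|=[U(S):H]\cdot[H:U(R)]=[\overline S^{\,*}:\overline R^{\,*}]\cdot[(1+\mathfrak m):(1+M)]=\frac{q^{d}-1}{q-1}\cdot|\mathfrak m/M|,
\]
where the last equality uses the bijection $1+a\leftrightarrow a+M$ between $(1+\mathfrak m)/(1+M)$ and $\mathfrak m/M$.

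Two elementary facts then drive everything. \textbf{(A)} If $M\subsetneq\mathfrak m$, then $|\mathfrak m/M|\geq q$: since $S$ is Noetherian (it lies between $R$ and the module-finite $R'$) and $\mathfrak m/M$ is a nonzero finitely generated $S$-module, Nakayama yields a nonzero quotient $\overline S$-vector space, whence $|\mathfrak m/M|\geq|\overline S|\geq q$. \textbf{(B)} If $M=\mathfrak m$, then $d\geq 2$: here $\overline S=S/M$, and $\overline S=\overline R$ would force $S=R+M=R$, contradicting $R\subsetneq S$; hence $\overline R\subsetneq\overline S$.

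For the first inequality I split on $d$. If $d\geq 2$, the first factor alone gives $|V|\geq (q^{d}-1)/(q-1)\geq q+1>q$. If $d=1$, then $\overline S=\overline R$, so by (B) we cannot have $M=\mathfrak m$; thus $M\subsetneq\mathfrak m$, and (A) gives $|V|=1\cdot|\mathfrak m/M|\geq q=|\overline R|$. For the sharpened bound, assume $M$ is the maximal ideal of $[M:M]$, i.e.\ $M=\mathfrak m$; then the second factor equals $1$, while (B) gives $d\geq 2$, so $|V|=(q^{d}-1)/(q-1)=1+q+\cdots+q^{d-1}\geq q+1=|\overline R|+1$.

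The main obstacle is getting the index factorization right: keeping careful track of which units of $R$ remain units of $S$, verifying $\phi(U(R))=\overline R^{\,*}$ and $U(R)\cap(1+\mathfrak m)=1+M$, and identifying $(1+\mathfrak m)/(1+M)$ with $\mathfrak m/M$ on the nose. Once that formula is in hand, the genuinely new input is the small observation (B) that $R\subsetneq[M:M]$ forces a nontrivial residue extension precisely when $M$ is maximal in $[M:M]$, which is exactly what upgrades the bound $|\overline R|$ to $|\overline R|+1$.
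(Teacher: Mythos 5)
Your proposal is correct, but it takes a genuinely different route from the paper's. The paper argues by exhibiting explicit elements of $V$: when $M\neq\mathfrak{m}$ it picks a single $m\in\mathfrak{m}\setminus M$ and representatives $u_0=0,u_1=1,\ldots,u_{N-1}$ of $\overline{R}$, and verifies by hand that $1,u_1+m,\ldots,u_{N-1}+m$ lie in $N$ distinct cosets of $U(R)$; when $M=\mathfrak{m}$ it observes that $[M:M]/M$ is an $\overline{R}$-vector space of dimension at least two, whose one-dimensional subspaces (at least $N+1$ of them) have unit representatives in pairwise distinct cosets. You instead compute the index exactly through the filtration $U(R)\subseteq U(R)(1+\mathfrak{m})\subseteq U(S)$, getting
\[
|V|=\frac{q^{d}-1}{q-1}\cdot|\mathfrak{m}/M|,
\]
and both stated bounds then fall out of your elementary facts (A) and (B). Your route proves strictly more than the theorem: it shows $|V|=(q^{d}-1)/(q-1)$ exactly when $M=\mathfrak{m}$, and gives the much stronger bound $|V|\geq\frac{q^{d}-1}{q-1}q^{d}$ when $M\subsetneq\mathfrak{m}$ (the paper's second case implicitly yields the same count $(q^d-1)/(q-1)$ as a lower bound, but its first case only gives $q$). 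The price is extra input: you need $S$ Noetherian and the relevant quotients finite, which rests on $R'$ being a finitely generated $R$-module (Theorem \ref{CKequiv}(3)), whereas the paper's element-by-element argument is self-contained. Two small remarks: your index factorization is legitimate since $U(S)$ is abelian, and the coset bijection $(1+\mathfrak{m})/(1+M)\leftrightarrow\mathfrak{m}/M$ works because $M$ is an $S$-ideal, so $(1+a)(1+M)=1+a+M$ for $a\in\mathfrak{m}$; also, in (A) Nakayama is unnecessary, since $\mathfrak{m}/M$ is already a nonzero vector space over $\overline{R}\subseteq S/M$, giving $|\mathfrak{m}/M|\geq q$ directly.
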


\begin{proof}
First, suppose $M \neq \mathcal{M}$, the maximal ideal of $[M:M]$.  Let $m \in \mathcal{M} \backslash M$.  Let $u_0 = 0, u_1 = 1, \ldots, u_{N-1}, N = |\overline{R}|$, be a complete set of representatives of $\overline{R}$.  So $u_1,\cdots,u_{N-1}$ are units of $R$.  Thus $1, u_1+m, \cdots,u_{N-1}+m$ are units of $[M:M]$ with $(u_i+m)U(R) \neq 1 U(R)$.  Suppose that $u_i + m = \lambda (u_j+m)$ for some $\lambda \in U(R)$.  Then $u_i-\lambda u_j = (\lambda-1)m \in R \cap \mathcal{M}=M$.  So $m \notin M$ gives $\lambda-1 \in M$; thus $\overline{\lambda}=\overline{1}$ in $\overline{R}$.  But then $0 \equiv u_i - \lambda u_j \equiv u_i - u_j$ mod $M$, so $i=j$.  Thus $1 U(R),(u_1+m)U(R),\ldots,(u_{N-1}+m)U(R)$ are $N$ distinct elements of $V$.

Now suppose that $M$ is the maximal ideal of $[M:M]$.  Now $[M:M]/M$ is an $\overline{R}$-vector space of dimension greater than one, so it has at least $N+1$ one-dimensional $\overline{R}$-subspaces.  Suppose that $\overline{R} \overline{v_1}, \cdots, \overline{R} \overline{v_l}$, $l \geq N+1$, are the one-dimensional $\overline{R}$-subspaces of $[M:M]/M$ where $v_i \in [M:M]$.  Since $M$ is the maximal ideal of $[M:M]$, $v_i \notin M$; so $v_i \in U([M:M])$.  Also, if $v_i = \lambda v_j$ for some $\lambda U(R)$, then $\overline{R} \overline{v_i} = \overline{R} \overline{v_j}$, and hence $i=j$.  So $|V| \geq l \geq N+1$.
\end{proof}

\begin{thm} \label{AtomEquiv}
Let $(R,M)$ be a local CK domain that is not a DVR.  Let $x \in M \backslash \{0\}$ and $\sigma \in U([M:M])$.

\begin{itemize}
\item[(1)]  $x$ is an atom if and only if $\sigma x$ is an atom.

\item[(2)]  $x \in M^n$ if and only if $\sigma x \in M^n$.

\item[(3)]  $x$ is an atom in $M^{n-1} \backslash M^n$ ($n \geq 2$) if and only if $\sigma x$ is an atom in $M^{n-1} \backslash M^n$.  Thus the number of nonassociate atoms of $R$ and the number of nonassociate atoms in $M \backslash M^2$ (in $M^{n-1} \backslash M^n \text{ for } n \geq 3$) is a nonzero multiple of $|V|$ (is a multiple of $|V|$, possibly $0$).

\item[(4)]  The following are equivalent.
  \begin{itemize}
  \item[(a)]  $M^2$ is universal.
  
  \item[(b)]  The atoms of $R$ are given by a single coset of $V$.
  
  \item[(c)]  The atoms of $M \backslash M^2$ are given by a single coset of $V$.
  \end{itemize}
\end{itemize}
\end{thm}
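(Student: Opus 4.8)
The plan is to get (1) and (2) directly from the single fact that multiplication by $\sigma$ is a bijection of $M$ onto itself, deduce (3) and its counting statement from them, and then establish (4) as a cycle (a) $\Rightarrow$ (b) $\Rightarrow$ (c) $\Rightarrow$ (a) resting on Theorem \ref{QLdoms} and Theorem \ref{QLatom}.

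For (2), note that $\sigma \in U([M:M])$ gives $\sigma M \subseteq M$ and $\sigma^{-1}M \subseteq M$, so $\sigma M = M$, and by induction $\sigma M^n = M^n$ for every $n$; hence $x \in M^n \iff \sigma x \in M^n$. For (1), I would use that since $R$ is quasilocal a nonzero $y \in M$ fails to be an atom exactly when $y = ab$ with $a,b \in M$; multiplication by $\sigma$ maps $M$ bijectively to $M$ and sends such a product $ab$ to $(\sigma a)b$ with $\sigma a \in M$, so it permutes the non-atoms of $M \setminus \{0\}$ and hence the atoms, proving (1). Part (3) is then the intersection of (1) and (2). For its ``multiple of $|V|$'' assertion I would let $V$ act on the finite set of principal ideals generated by atoms via $\sigma U(R)\cdot Rz = R\sigma z$; this is well defined and free (the stabilizer of $Rz$ forces $\sigma \in U(R)$), so every orbit has exactly $|V|$ elements, and the same applies verbatim to the $V$-stable subsets of atoms in $M \setminus M^2$ and in $M^{n-1}\setminus M^n$. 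Nonemptiness (a nonzero multiple) for all atoms and for $M \setminus M^2$ uses $M \neq M^2$; for $n \geq 3$ the set may be empty.

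For (4) I will use the translation: two atoms $x,y$ lie in one $V$-orbit iff $y/x \in U([M:M])$, i.e. iff they generate the same principal ideal of $[M:M]$. For (a) $\Rightarrow$ (b), if $M^2$ is universal then $R$ atomic together with Theorem \ref{QLdoms}(6) makes $[M:M]$ a DVR with maximal ideal $M = a[M:M]$, and by Theorem \ref{QLdoms}(5) there are no atoms in $M^2$; thus the atoms are exactly $M \setminus M^2 = a\,([M:M]\setminus M) = U([M:M])\,a$, a single orbit $Va$. For (b) $\Rightarrow$ (c), since $M \neq M^2$ there is an atom $w \in M \setminus M^2$; taking it as orbit representative, (2) shows every $\sigma w$ again lies in $M \setminus M^2$, so the single orbit of all atoms already sits inside $M \setminus M^2$, which is (c).

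The crux is (c) $\Rightarrow$ (a), and I expect the main obstacle there to be identifying the maximal ideal of $[M:M]$. Assuming the atoms of $M \setminus M^2$ form one orbit $Va$, each such atom lies in $[M:M]a$; since by Theorem \ref{QLatom}(2) the finitely many nonassociate atoms of $M \setminus M^2$ generate $M$, this gives $M \subseteq [M:M]a$, while $a \in M$ gives $[M:M]a \subseteq M$, so $M = a[M:M]$ is principal as an ideal of $[M:M]$. It remains to show $M$ is the maximal ideal $\mathcal{M}$ of $[M:M]$, which is the delicate point. I would argue by contradiction: if $b \in \mathcal{M}\setminus M$, then $ab \in a[M:M] = M$, yet $ab \notin M^2 = a^2[M:M]$ (otherwise $b = a\tau \in M$), so $ab$ is an atom in $M \setminus M^2$ with $ab/a = b \notin U([M:M])$, placing it in a $V$-orbit different from that of $a$ and contradicting (c). Hence $M = \mathcal{M}$ is principal, and Theorem \ref{QLdoms}(5)(f) yields that $M^2$ is universal, closing the cycle.
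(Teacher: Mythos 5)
Your proof is correct, and most of it tracks the paper closely: the paper proves (1) and (2) by the same ``move $\sigma$ onto one factor'' computation, justifies the divisibility-by-$|V|$ claim in (3) via the correspondence $R\sigma x \leftrightarrow \sigma x U(R)$ set up in the paragraph before Theorem \ref{Vcard} (your free $V$-action on the principal ideals generated by atoms is exactly that correspondence made explicit), and proves (a) $\Rightarrow$ (b) by the same appeal to Theorem \ref{QLdoms} making $[M:M]$ a DVR with maximal ideal $\pi[M:M]$, so that the atoms form the single coset $V\pi$. The one place you genuinely diverge is (c) $\Rightarrow$ (a). The paper stays inside $R$: writing any two elements $a,b \in M\setminus M^2$ as $u_1x$, $u_2x$ with $u_i \in U([M:M])$, it gets $ab \in MRx$, hence $M^2 \subseteq MRx + M^3$, and Nakayama gives $M^2 = MRx = xM$, which is condition (d) of Theorem \ref{QLdoms}(5). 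You instead work inside $[M:M]$: the finitely many nonassociate atoms of $M\setminus M^2$ generate $M$ (Theorem \ref{QLatom}(2)) and by (c) all lie in $[M:M]a$, so $M = a[M:M]$; then your contradiction argument (a nonunit $b \in \mathcal{M}\setminus M$ would make $ab$ an atom of $M\setminus M^2$ lying outside the orbit $Va$) identifies $M$ as the maximal ideal of $[M:M]$, which is condition (f) of Theorem \ref{QLdoms}(5). Both routes are valid and of comparable length; the paper's avoids analyzing $[M:M]$ beyond quoting the equivalence of (d) with (a), while yours avoids Nakayama and delivers the structural conclusion---$[M:M]$ quasilocal with principal maximal ideal $M$---directly rather than extracting it afterwards from the equivalences.
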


\begin{proof}
(1) Suppose that $x$ is an atom.  If $\sigma x = ab$ where $a,b \in M$, then $x = (\sigma^{-1}a)b$, a contradiction.  So $\sigma x$ is an atom.  Hence if $\sigma x$ is an atom, so is $x = \sigma^{-1} (\sigma x)$.
(2) Suppose $x \in M^n$ so $x = \sum_{i=1}^n m_{i_1} \cdots m_{i_n}$ for some $m_{i_j} \in M$.  Then $\sigma x = \sum_{i=1}^n (\sigma m_{i_1}) m_{i_2} \cdots m_{i_n} \in M^n$.  Thus if $\sigma x \in M^n$, $x = \sigma^{-1} (\sigma x) \in M^n$.

(3) This follows from (1) and (2) and the remarks concerning $V$ given in the paragraph preceding Theorem \ref{Vcard}.

(4) $(a) \implies (b)$ Suppose that $M^2$ is universal.  By Theorem \ref{QLdoms} (5), $R' = [M:M]$ is a DVR with maximal ideal $M=\pi [M:M]$ for any $\pi \in M \backslash M^2$.  Thus the atoms of $R$ have the form $\sigma \pi$ where $\sigma \in U([M:M])$.  So the single coset $V \pi$ gives the atoms of $R$.  $(b) \implies (c)$  Clear.  $(c) \implies (a)$ We have that for any $x \in M \backslash M^2$, the elements of $M \backslash M^2$ have the form $\sigma x$ where $\sigma \in U([M:M])$.  So for $a,b \in M \backslash M^2$, $a = u_1 x$ and $b = u_2 x$ for some $u_1,u_2 \in U([M:M])$.  So $ab = (u_1 x) (u_2 x) = (u_1 u_2 x) x \in MRx$.  Thus $M^2 \subseteq MRx + M^3$.  By Nakayama's Lemma $M^2 = MRx$.  By Theorem \ref{QLdoms} (5), $M^2$ is universal.
\end{proof}

Parts of Theorem \ref{AtomEquiv} were proved by Cohen and Kaplansky \cite{CK}.  They noted (1), $4 (a) \iff 4(b)$, and that the number of nonassociate atoms is a multiple of $|V|$.

\begin{cor} \label{2p}
Let $(R,M)$ be a local CK domain that is not a DVR.

\begin{itemize}
\item[(1)]  \cite[Corollary, page 475]{CK}  Suppose that the number of nonassociate atoms of $R$ is prime, then $M^2$ is universal.

\item[(2)]  Suppose that the number of nonassociate atoms in $M \backslash M^2$ is prime, then $M^2$ is universal.

\item[(3)]  Suppose that $R$ has exactly $2 p$ nonassociate atoms where $p$ is prime and $|\overline{R}| \neq 2$ where $\overline{R} = R/M$.  Then $R$ has no atoms in $M^2$.
\end{itemize}
\end{cor}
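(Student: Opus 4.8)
The plan is to reduce all three parts to two structural facts already in hand. By Theorem \ref{AtomEquiv}(3), the nonassociate atoms of $R$ (and likewise those in $M \backslash M^2$) are partitioned into $V$-cosets, so each of these counts is a nonzero multiple of $|V|$; and by Theorem \ref{AtomEquiv}(4), $M^2$ is universal precisely when the atoms (equivalently, the atoms in $M \backslash M^2$) form a \emph{single} $V$-coset, i.e. when the relevant count equals $|V|$. The only other input is the bound $|V| \geq |\overline{R}|$ of Theorem \ref{Vcard}: since $\overline{R}$ is a finite field we always have $|V| \geq 2$, and under $|\overline{R}| \neq 2$ we get $|V| \geq 3$.

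For (1) and (2), suppose the relevant count (total atoms for (1), atoms in $M \backslash M^2$ for (2)) equals a prime $P$. Since $P$ is a nonzero multiple of $|V|$ we have $|V| \mid P$, and $|V| \geq 2$ forces $|V| = P$. Thus the count equals $|V|$, so the atoms (resp. the atoms in $M \backslash M^2$) form one $V$-coset, and Theorem \ref{AtomEquiv}(4) gives that $M^2$ is universal.

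For (3), assume $R$ has exactly $2p$ nonassociate atoms with $p$ prime and $|\overline{R}| \neq 2$, so $|V| \geq 3$. Since $|V| \mid 2p$ and $|V| \geq 3$, the only possibilities are $|V| = 2p$ or $|V| = p$ (the latter requiring $p \geq 3$). If $|V| = 2p$, the atoms form a single coset, so by Theorem \ref{AtomEquiv}(4) $M^2$ is universal; then Theorem \ref{QLdoms}(5) shows every atom lies in $M \backslash M^2$, so there are no atoms in $M^2$. If $|V| = p$, I would examine the atoms in $M \backslash M^2$: their number is a nonzero multiple of $|V| = p$ not exceeding $2p$, hence either $p$ or $2p$. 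Were it $p = |V|$, these atoms would form a single coset, forcing $M^2$ universal and (again by Theorem \ref{QLdoms}(5)) all $2p$ atoms into $M \backslash M^2$, contradicting that only $p$ lie there. Hence the count is $2p$, all atoms lie in $M \backslash M^2$, and once more there are no atoms in $M^2$.

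The substantive part is (3), and the crux is eliminating the case $|V| = p$. This is exactly where $|\overline{R}| \neq 2$ enters, since it rules out $|V| = 2$, which would leave $p$ cosets and no contradiction, consistent with the paper's eight-atom example over $\mathrm{GF}(2)$ carrying an atom in $M^2$. Everything else is divisibility bookkeeping on the coset count together with the universality dictionary of Theorems \ref{AtomEquiv} and \ref{QLdoms}.
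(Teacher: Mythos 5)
Your proof is correct and follows essentially the same route as the paper: the divisibility of the atom counts by $|V|$ (Theorem \ref{AtomEquiv}(3)), the single-coset characterization of universality (Theorem \ref{AtomEquiv}(4)), and the bound $|V| \geq |\overline{R}| \geq 3$ from Theorem \ref{Vcard} to split (3) into the cases $|V| = 2p$ and $|V| = p$. The only cosmetic difference is that in the case $|V| = p$ you re-derive the needed fact inline, whereas the paper simply cites part (2) to rule out having $p$ atoms in $M \backslash M^2$.
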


\begin{proof}
(1) and (2) follow immediately from Theorem \ref{AtomEquiv} (4).

(3)  Here $|V| \geq |\overline{R}| \geq 3$ by Theorem \ref{Vcard} and $|V| \; | \; 2p$, so $|V|=p$ or $2p$.  If $|V|=2p$, every atom is in $M \backslash M^2$ (in fact, $M^2$ is universal).  So suppose that $|V|=p$.  So there are either $2p$ nonassociate atoms in $M \backslash M^2$ or $p$ nonassociate atoms in $M \backslash M^2$ and $p$ nonassociate atoms in $M^2$.  In the first case every atom is in $M \backslash M^2$.  The second case cannot occur since by (2) if the number of nonassociate atoms in $M \backslash M^2$ is prime, $M^2$ is universal and hence all atoms lie in $M \backslash M^2$.
\end{proof}

\begin{cor}
Let $(R,M)$ be a local CK domain that is not a DVR and let $\overline{R} = R/M$.  Suppose that there are less than $2|\overline{R}|$ nonassociate atoms in $M \backslash M^2$.  Then $R$ has exactly $|\overline{R}|+1$ nonassociate atoms and $M^2$ is universal.
\end{cor}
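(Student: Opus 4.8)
The plan is to combine the subspace lower bound for the number of atoms in $M \backslash M^2$ with the divisibility of that number by $|V|$, and then to exploit that $|V| \geq |\overline{R}|$ is itself already more than half of the stated bound $2|\overline{R}|$. Write $N = |\overline{R}|$ and let $s$ denote the number of nonassociate atoms in $M \backslash M^2$. First I would record the basic structural facts. Since $R$ is a local CK domain that is not a DVR, $\overline{R}$ is finite by Theorem \ref{CKequiv}, so $N$ is a finite integer with $N \geq 2$; moreover $k := \text{dim}_{\overline{R}} \, M/M^2 \geq 2$, because $k = 1$ together with atomicity would force $R$ to be a DVR via Theorem \ref{QLatom}(4) (c)$\,\Rightarrow\,$(a). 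In particular the hypothesis $s < 2N$ is a genuine constraint on a nontrivial situation.

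Next comes what I expect to be the crux of the argument. By Theorem \ref{AtomEquiv}(3) the number $s$ is a \emph{nonzero} multiple of $|V|$, and by Theorem \ref{Vcard} we have $|V| \geq N$. The hypothesis then reads $s < 2N \leq 2|V|$, so the only positive multiple of $|V|$ that is strictly smaller than $2|V|$ is $|V|$ itself; hence $s = |V|$. The main obstacle is really just this squeeze: one must notice that $s < 2|\overline{R}|$ is tailored precisely so that a \emph{second} $V$-coset of atoms (which would push $s$ up to at least $2|V| \geq 2N$) cannot fit. Recalling from the paragraph preceding Theorem \ref{Vcard} that $V$ acts freely on the associate-classes of atoms, with each orbit having exactly $|V|$ elements, and that by Theorem \ref{AtomEquiv}(3) the atoms of $M \backslash M^2$ form a $V$-stable set, the equality $s = |V|$ says exactly that these atoms constitute a single coset of $V$.

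It remains to conclude and to pin down the count. Since the atoms of $M \backslash M^2$ form a single $V$-coset, Theorem \ref{AtomEquiv}(4) (c)$\,\Rightarrow\,$(a) yields that $M^2$ is universal. Now I would count two ways. On the one hand, universality of $M^2$ gives $s = m$ with $m = (N^k - 1)/(N-1)$ by Theorem \ref{QLatom}(6) (a)$\,\Rightarrow\,$(b); combined with $s < 2N$ and $k \geq 2$ this forces $k = 2$, since for $k \geq 3$ one has $m \geq 1 + N + N^2 > 2N$ (as $N^2 - N + 1 > 0$). Hence $m = N + 1$ and $s = N + 1$. On the other hand, universality of $M^2$ places \emph{every} atom of $R$ in $M \backslash M^2$ by Theorem \ref{QLdoms}(5) (a)$\,\Leftrightarrow\,$(c), so the total number of nonassociate atoms of $R$ equals $s = N + 1 = |\overline{R}| + 1$. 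This gives both assertions of the corollary.
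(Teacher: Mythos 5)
Your proof is correct and is essentially the paper's own argument: both use Theorem \ref{Vcard} and Theorem \ref{AtomEquiv} to conclude that having fewer than $2|\overline{R}| \leq 2|V|$ nonassociate atoms in $M \backslash M^2$ forces those atoms to form a single $V$-coset, hence $M^2$ is universal, and then both count $(|\overline{R}|^k-1)/(|\overline{R}|-1) < 2|\overline{R}|$ to force $k = \text{dim}_{\overline{R}}\, M/M^2 = 2$ and thus exactly $|\overline{R}|+1$ atoms. The only difference is presentational: the paper runs the first step by contradiction (two or more cosets would give at least $2|V| \geq 2|\overline{R}|$ atoms), whereas you run it as a direct divisibility squeeze ($s$ is a nonzero multiple of $|V|$ with $s < 2|V|$, so $s = |V|$).
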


\begin{proof}
If $M^2$ is not universal, then there are at least two cosets of $V$ containing atoms in $M \backslash M^2$, so there are at least $2 |V| \geq 2 |\overline{R}|$ nonassociate atoms in $M \backslash M^2$.  Thus $M^2$ must be universal.  Hence $R$ has $1 + |\overline{R}| + \cdots + |\overline{R}|^{k-1} = (|\overline{R}|^k-1) / (|\overline{R}|-1)$ nonassociate atoms where $k = \text{dim}_{\overline{R}} \; M / M^2$.  But $1 + |\overline{R}| + |\overline{R}|^2 \geq 2 |\overline{R}|$, so we must have $k=2$ in which case $R$ has $|\overline{R}|+1$ nonassociate atoms.
\end{proof}

Let $R = GF(2)[[X^2,X^3]] = GF(2) + GF(2)[[X]]X^2$, so $\overline{R} = GF(2)$ and hence $|\overline{R}|=2$ where $\overline{R} = R/M$, $M$ the maximal ideal of $R$.  By Example \ref{CKex}, $R$ has exactly $4 = 2 |\overline{R}|$ nonassociate atoms, but $M^2$ is not universal.

We can now improve on Theorem \ref{QLatom} (6) which stated that if $(R,M)$ is a local CK domain with an atom in $M^2$, then the number of nonassociate atoms of $R$ is at least $|\overline{R}| (|\overline{R}|^k-1) / (|\overline{R}|-1)+1 = (|\overline{R}|^{k+1}-1) / (|\overline{R}|-1)$ where $\overline{R} = R/M$ and $k = \text{dim}_{|\overline{R}|} M / M^2$.

\begin{cor}
Let $(R,M)$ be a local CK domain with an atom in $M^2$.  Let $\overline{R} = R/M$ and $k = \text{dim}_{\overline{R}} M / M^2$.  Then the number of nonassociate atoms of $R$ is at least $|\overline{R}| (|\overline{R}|^k-1) / (|\overline{R}|-1)+|\overline{R}| = (|\overline{R}|^{k+1}-1) / (|\overline{R}|-1) + |\overline{R}|-1$.  If further $M$ is the maximal ideal of $[M:M]$, there are at least $(|\overline{R}|^{k+1}-1) / (|\overline{R}|-1) + |\overline{R}|$ nonassociate atoms.
\end{cor}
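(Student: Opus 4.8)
The plan is to bound the nonassociate atoms lying in $M\backslash M^2$ and those lying in $M^2$ separately and then add, the point being that an atom in $M^2$ does not merely contribute itself but forces a whole $V$-orbit of nonassociate atoms in $M^2$. Throughout write $m := (|\overline{R}|^k-1)/(|\overline{R}|-1)$ for the number of one-dimensional $\overline{R}$-subspaces of $M/M^2$, as in Theorem \ref{QLatom} (6). Note first that an atom in $M^2$ forces $M \neq M^2$, so $R$ is not a DVR and Theorems \ref{AtomEquiv} and \ref{Vcard} are available.

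For the $M\backslash M^2$ contribution I would simply invoke Theorem \ref{QLatom} (6): since $R$ has an atom in $M^2$, there are at least $m|\overline{R}|$ nonassociate atoms in $M\backslash M^2$ (these are the elements $x_i+u_j q$ produced there via Theorem \ref{QLdoms} (4)). The new input is the $M^2$ contribution. Because $R$ is a local CK domain that is not a DVR, it is one-dimensional and analytically irreducible, so $\bigcap_{n=1}^\infty M^n = 0$ (equivalently, by Theorem \ref{Uni} there are no atoms in $P=\bigcap_n M^n$). Hence the given atom $a\in M^2$ actually lies in $M^{n_0-1}\backslash M^{n_0}$ for some finite $n_0\geq 3$. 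By Theorem \ref{AtomEquiv} (3) the number of nonassociate atoms in $M^{n_0-1}\backslash M^{n_0}$ is a multiple of $|V|$, and it is nonzero since it contains $a$; therefore it is at least $|V|$, which is at least $|\overline{R}|$ by Theorem \ref{Vcard}. Thus $M^2$ contains at least $|\overline{R}|$ nonassociate atoms.

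Combining the two counts, and observing that no atom of $M\backslash M^2$ can be associate to an atom of $M^2$, $R$ has at least $m|\overline{R}| + |\overline{R}|$ nonassociate atoms; the claimed displayed equality $m|\overline{R}|+|\overline{R}| = (|\overline{R}|^{k+1}-1)/(|\overline{R}|-1)+|\overline{R}|-1$ is then a routine rearrangement. For the refinement, if $M$ is the maximal ideal of $[M:M]$ then Theorem \ref{Vcard} upgrades the bound to $|V|\geq |\overline{R}|+1$, so the $M^2$ contribution becomes at least $|\overline{R}|+1$ and the total rises to $m|\overline{R}|+|\overline{R}|+1 = (|\overline{R}|^{k+1}-1)/(|\overline{R}|-1)+|\overline{R}|$.

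The substantive step, and the one improving on Theorem \ref{QLatom} (6) (which counted the atom in $M^2$ only once, giving the weaker $m|\overline{R}|+1$), is the promotion of a single atom in $M^2$ to a full coset of $V$ worth of nonassociate atoms. The main thing to get right is that this atom sits in a single graded layer $M^{n_0-1}\backslash M^{n_0}$ with $n_0\geq 3$ finite, so that Theorem \ref{AtomEquiv} (3) genuinely applies and yields a nonzero multiple of $|V|$; this is exactly where analytic irreducibility (ensuring $\bigcap_n M^n=0$) is used.
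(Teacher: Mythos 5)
Your proof is correct and is essentially the paper's own (implicit) argument: the corollary is stated there without proof precisely because it follows, exactly as you argue, by adding the $m|\overline{R}|$ atoms in $M \backslash M^2$ from Theorem \ref{QLatom} (6) to a full $V$-orbit of nonassociate atoms in the layer $M^{n_0-1} \backslash M^{n_0}$ ($n_0 \geq 3$) containing the given atom, which by Theorem \ref{AtomEquiv} (3) and Theorem \ref{Vcard} has size at least $|V| \geq |\overline{R}|$ (resp.\ $|\overline{R}|+1$ when $M$ is the maximal ideal of $[M:M]$). One cosmetic slip: ``an atom in $M^2$ forces $M \neq M^2$, so $R$ is not a DVR'' is a non sequitur as written (DVRs also have $M \neq M^2$); the correct, equally trivial justification is that the only atom of a DVR up to associates lies in $M \backslash M^2$, so an atom in $M^2$ directly rules out the DVR case, making Theorems \ref{AtomEquiv} and \ref{Vcard} applicable.
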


\section{Examples}

This section consists of examples.  We begin by stating the following example from \cite{AMO} showing for each $n \geq 2$ the existence of a local CK domain $(R,M)$ with an atom in $M^n \backslash M^{n+1}$, thus answering a question raised by Cohen and Kaplansky \cite{CK}.  While not noted in \cite{AMO}, we show here that $M^{2n}$ is universal while $M^{2n-1}$ is not.  Thus in a local CK domain $(R,M)$, the least power of $M$ that is universal can be arbitrarily large.

\begin{ex} (\cite[Example 7.3]{AMO}) \label{AndMottEx}
Let $K$ be a finite field and let $n \geq 2$.  Then there is a complete local CK domain $(R,M)$ with $R/M \cong K$ and an atom $f \in M^n \backslash M^{n+1}$.  Moreover, no element of $M^{n+1}$ is an atom.  Here $M^{2n}$ is universal but $M^{2n-1}$ is not universal.  Let $f_n \in K[Y]$ be irreducible of degree $n$.  Let $F$ be a field extension of $K$ with $[F:K]=n+1$ and let $1,y,\ldots,y^{n-1}$ be a $K$-basis for $F$.  For $i$, $1 \leq i \leq n-1$, put $V_i=K \cdot 1 + K \cdot y + \cdots + K \cdot y^i$ and $R = K + V_1 X + \cdots + V_{n-1}X^{n-1}+F[[X]]X^n$.  So $R$ is a local CK domain with maximal ideal $M=V_1 X + \cdots + V_{n-1}X^{n-1}+F[[X]]X^n$.  For $i \geq n$, $M^i = F[[X]]X^i$.  Let $f=f_n(y)X^n$.  Then $f \in M^n \backslash M^{n+1}$ is an atom.  Let $g \in R$ with $\text{ord}(g)=j$.  Then $Rg \supseteq M^n g = F[[X]]X^ng = F[[X]]X^{n+j}=M^{n+j}$.  This shows that there is no atom in $M^{n+1}=F[[X]]X^{n+1}$ (take $g=X$) and hence that $M^{2n}=F[[X]]X^{2n}$ is universal.  However, $M^{2n-1} \not \subseteq Rf$, so $M^{2n-1}$ is not universal.  For suppose $M^{2n-1} \subseteq Rf$, then $F[[X]]X^{2n-1}=M^{2n-1}=M^{n-1}f$.  Let $g \in M^{n-1}f$ with $\text{ord}(g)=2n-1$.  Then the leading coefficient of $g$ is in the $(n-1)$-dimensional $K$-subspace $V_{n-1}f = \{vf_n \; | \; v \in V_{n-1}\} \subsetneq F$, contradicting our assumption that $M^{n-1} f = F[[X]]X^{2n-1}$.
\end{ex}

For the case $n=2$, the ring $R$ has the form $R = K + WX + F[[X]]X^2$ where $K \subsetneq F$ is a field extension and $W$ is a $K$-subspace of $F$.  As we will see this example has exactly $8$ nonassociate atoms with $2$ atoms in $M^2$ where $M$ is the maximal ideal of $R$.

Thus we begin (Example \ref{GenEx}) with a careful study of quasilocal domains of the form $R = K + WX + F[[X]]X^2$ where $K \subseteq F$ is an arbitrary field extension and $W$ is a $K$-subspace of $F$ (possibly $0$).  Such a domain is a BFD and is a CK domain if and only if $K =W = F$ or $F$ is finite.  We completely determine the atoms of $R$ and the atoms of $R$ lying in $M^2$, if any.  We determine the cardinality of the set of nonassociate atoms of $R$ that lie in $M \backslash M^2$ and the cardinality of the set of nonassociate atoms of $R$ that lie in $M^2$.  For $R$ a CK domain, these numbers are given in Example \ref{CKex}.  We then give our example (Example \ref{8atom}) of a local CK domain with $8$ atoms, 2 of which are in $M^2$.  As previously remarked, this contradicts a statement of Cohen and Kaplansky.

We end this section by giving a construction of a family of local CK domains with precisely $3$ nonassociate atoms.

\begin{ex} \label{GenEx}
Let $K \subseteq F$ be a field extension and let $W$ be a $K$-subspace of $F$, possibly $0$.  Let $R = K + WX + F[[X]]X^2$.  So $R$ is a quasilocal domain with maximal ideal $M = WX + F[[X]]X^2$.  For $n \geq 1$, $W^n :=\{w_1 \cdots w_n | w_i \in W\}$ and $KW^n$ denotes the $K$-subspace of $F$ spanned by $W^n$.  Now $\bigcap_{n=1}^{\infty} M^n =0$, so $R$ is a BFD.  We have $R$ is Noetherian if and only if $[F:K] < \infty$, and $R$ is a CK domain if and only if $R$ is a DVR (that is, $F=W=K$) or $F$ is finite.  Furthermore, $R$ has residue field $\overline{R}=R/M=K$.  The quotient field of $R$ is $F[[X]][X^{-1}]$, its complete integral closure is $R^c=F[[X]]$, and its integral closure is $R' = L + F[[X]]$ where $L$ is the algebraic closure of $K$ in $F$.

We have that $[M:M]=[W:_F W]+F[[X]]X$ where $K \subseteq [W:_F W] \subseteq F$ and $[W:_F W] = \{x \in F\; | \; xW \subseteq W \}$ is an integral domain.  Additionally, $U(R) = \{\sum_{n=0}^{\infty} a_n X^n \in R \; | \; a_0 \neq 0 \}$ and 

\begin{align*}
U([M:M]) &= \{\sum_{n=0}^{\infty} a_n X^n \in [M:M] \; | \; a_0 \in U([W:_F W] \} \\
&= \{\sum_{n=0}^{\infty} a_n X^n \in F[[X]] \; | \; a_0 \in U([W:_F W]) \}.
\end{align*}

We have the s.e.s. $$0 \to U(R^c) / U(R) \to G(R) \to G(R^c) \to 0$$ which splits since $G(R^c) \cong \Z$, so $G(R) \cong \Z \oplus U(R^c) / U(R)$.  Now $U(R^c) /U(R) \cong F^* / K^* \oplus F/W$, for the map $$\sum_{n=0}^{\infty} a_n X^n \to (a_0 K^*, a_0^{-1} a_1 + W),$$ $\sum_{n=0}^{\infty} a_n X^n \in F[[X]]$, $a_0 \neq 0$, is a homomorphism with kernel $U(R)$.  With this identification $V:=U([M:M]) /  U(R) \cong U([W:_F W]) / K^* \oplus F/W$.

Let $f = a_n X^n + a_{n+1}X^{n+1}+ \cdots \in R$ where $n \geq 0$ and $a_n \neq 0$.  Then $f \sim a_n X^n + a_{n+1}X^{n+1}$.  Suppose that $a X^n + bX^{n+1} \sim c X^n + d x^{n+1}$ in $R$, $a,c, \neq 0$.  Then $a c^{-1} \in K^*$, so after multiplying $cX^n + dX^{n+1}$ by $ac^{-1}$, it suffices to determine when $aX^n + bX^{n+1} \sim aX^n + d X^{n+1}$.  But this holds if and only if $(aX^n + bX^{n+1}) (aX^n + dX^{n+1})^{-1} \in U(R)$ $\iff$ $\frac{b}{a} - \frac{d}{a} \in W$ $\iff$ $b + aW = d + aW$.  Here $aW$ is a $K$-subspace of $F$.

Let $\{a_{\alpha}\}_{\alpha \in \Lambda}$ be a complete set of representatives of $F^* / K^*$.  Equivalently, we have that $\{a_{\alpha}\}_{\alpha \in \Lambda}$ is a complete set of representatives of the one-dimensional $K$-subspaces of $F$ ($a_{\alpha}K^* \leftrightarrow K a_{\alpha}$).  Let $\{b_{\beta}\}_{\beta \in \Gamma}$ be a complete set of representatives of $F/W$.  For $a \in F^*$, $\{ab_{\beta}\}_{\beta \in \Gamma}$ is a complete set of representatives of $F/aW$.  We have $|\Lambda| = |F^* / K^*|$ and $|\Gamma| = |F/W| = |F/aW|$.  For $W \neq 0$, we let $\{a_{\alpha}\}_{\alpha \in \Omega} \subseteq \{a_{\alpha}\}_{\alpha \in \Lambda}$ be a complete set of representatives of the one-dimensional $K$-subspaces of $W$, or equivalently, of the cosets $\{w K^* \; | \; w \in W \backslash \{0\}\}$.

Suppose $n \geq 2$.  Now $a_{\alpha}X^n + bX^{n+1} \sim a_{\beta}X^n + dX^{n+1} \implies \alpha = \beta$ and $a_{\alpha}X^n + bX^{n+1} \sim a_{\alpha}X^n + dX^{n+1} \iff b + a_{\alpha}W = d + a_{\alpha}W$.  Thus we have that the set $\{a_{\alpha}X^n + a_{\alpha}b_{\beta}X^{n+1}\}_{(\alpha,\beta) \in \Lambda \times \Gamma}$ is a complete set of representatives for the equivalence classes of associate elements of the form $aX^n + bX^{n+1}$, $0 \neq a \in F$.  Next suppose that $n=1$ and $W \neq 0$.  Then $\{a_{\alpha}X + a_{\alpha}b_{\beta}X^2\}_{(\alpha,\beta)\in \Omega \times \Gamma}$ is a complete set of representatives for the equivalence classes of associate elements of the form $aX + bX^2$ where $0 \neq a \in W$.

We next determine the atoms of $R$.  Let $f = a_n X^n + a_{n+1}X^{n+1} + \cdots \in R$ where $a_n \neq 0$, so $\text{ord}(f)=n$.  Now $f$ is a unit $\iff$ $\text{ord}(f)=0$.  If $n = \text{ord}(f)=1$, $f$ is an atom.  If $\text{ord}(f) \geq 4$, $f$ is never an atom.  Suppose $W \neq 0$.  Let $0 \neq w \in W$.  Let $\text{ord}(f)=3$.  Then $f = w X (w^{-1}a_3 X^2+w^{-1}a_4X^3 + \cdots )$ and hence $f$ is not an atom.  For $W=0$ and $\text{ord}(f)=3$, $f$ is always an atom.  We next determine when $f$ is an atom for $\text{ord}(f)=2$.  Now $f = a_2X^2+a_3X^3 + \cdots$ is an atom $\iff$ $a_2 X^2 + a_3 X^3$ is an atom.  Since $f$ is not an atom $\iff$ $a \in W^2$, we have that $a_2 X^2 + a_3 X^3$ is an atom $\iff$ $a_2 \in F \backslash W^2$.

Case $W = 0$.  So $R = K + F[[X]]X^2$, $M^n = F[[X]]X^{2n}$, $[M:M]=F[[X]]$, $G(R) \cong \Z \oplus F^* /K^* \oplus F$ and $V \cong F^* / K^* \oplus F$, under this identification.  Also, $\{a_{\alpha}X^2 + a_{\alpha}bX^3 | \alpha \in \Lambda, b \in F\}$ or $\{a_{\alpha}X^2 + bX^3 \; | \; \alpha \in \Lambda, b \in F\}$ is a complete set of nonassociate atoms of $R$ of order $2$.  Hence this set of nonassociate atoms has cardinality $|F^* /K^*||F|$.  Likewise $\{a_{\alpha}X^3 + a_{\alpha}bX^4 \; | \; a \in \Lambda, b \in F\}$ or $\{a_{\alpha} X^3 + bX^4 \; | \; \alpha \in \Lambda, b \in F\}$ is a complete set of nonassociate atoms of order $3$ and has cardinality $|F^*/K^*||F|$.  So the set of nonassociate atoms of $R$ has cardinality $2|F^*/K^*||F|$.  There are no atoms in $M^2$.  Here $M^2$ is not universal, but $M^3$ is universal.

Case $W \neq 0$.

\begin{itemize}
\item[a)]  $W=F$, so $R = K + F[[X]]X$.  Here $M^2$ is universal and $\{a_{\alpha}X\}_{\alpha \in \Lambda}$ is a complete set of nonassociate atoms of $R$.  All atoms are in $M \backslash M^2$.  The cardinality of the set of atoms is $|F^*/K^*|$.  We can identify $V$ with $F^*/K^*$ where $G(R) \cong \Z \oplus F^*/K^*$.

% Huh?  Don't get the cardinality thing that was highlighted
\item[b)]  $W \neq F$, so $0 \subsetneq W \subsetneq F$.  Here atoms have order $1$ or order $2$, so there are no atoms in $M^3$.  We see that $\{a_{\alpha}X + a_{\alpha}b_{\beta}X^2\}_{(\alpha,\beta) \in \Omega \times \Gamma}$ is a complete set of nonassociate atoms of order $1$, of course, all lying in $M \backslash M^2$.  So this gives $|\Omega| |F/W|$ nonassociate atoms.  Now $aX^2 + bX^3$ is an atom $\iff$ $a \notin W^2$.  Thus $\{a_{\alpha}X^2 + a_{\alpha}b_{\beta}X^3 \; | \; a_{\alpha} \in F^* \backslash W^2, \beta \in \Gamma\}$ is a complete set of nonassociate atoms of order $2$.  The cardinality of this set is $|\{a_{\alpha} \; | \; \alpha \in \Lambda, a_{\alpha} \in F^* \backslash W^2\}| |F/W|$.  Note that $|\{a_{\alpha} \; | \; \alpha \in \Lambda, a_{\alpha} \in F^* \backslash W^2\}| = |\{aK^* \; | \; a \in F^* \backslash W^2\}|=|\{Ka \; | \; a \in F^* \backslash W^2\}|$.  So the cardinality of the set of nonassociate atoms of $R$ is $|\Omega| |F/W| + |\{a_{\alpha} \; | \; \alpha \in \Lambda, a_{\alpha} \in F^* \backslash W^2\}| |F/W|$.  Note that the atom (of order 2) $a_{\alpha} X^2 + a_{\alpha} b_{\beta}X^3$ is in $M^2 = KW^2 X^2 + F[[X]]X^3 \iff a_{\alpha} \in KW^2 \backslash W^2$.  Thus there is an atom in $M^2$ $\iff$ $W^2 \subsetneq KW^2$.  The set $\{a_{\alpha} X^2 + a_{\alpha}b_{\beta}X^3 \; | \; a_{\alpha} \in KW^2 \backslash W^2, \beta \in \Gamma\}$ is a complete set of nonassociate atoms of $R$ in $M^2$.  The cardinality of this set is $|\{a_{\alpha} \; | \; \alpha \in \Lambda, a_{\alpha} \in KW^2 \backslash W^2\}| |F/W|$.

Here $M^2$ is never universal, but $M^4$ is always universal.  Let $f$ be an atom of $R$.  If $\text{ord}(f)=1$, $Rf \supseteq M^3 = KW^3 + F[[X]]X^4$.  So suppose $\text{ord}(f)=2$, so $f \sim aX^2 + bX^3$ where $a \in F^* \backslash W^2$ and $b \in F$.  Now $Rf \supseteq M^3 \implies F = Wa$.  Conversely, if $a \in F^* \backslash W^2$ with $F = aW$, then $R(aX^2 + bX^3) \supseteq M^3$ for any $b \in F$.  Thus $M^3$ is universal if and only if $F=aW$ for each $a \in F^* \backslash W^2$.  However $F=aW$ $\iff$ $F = a^{-1}F=W$.  But we are assuming that $W \neq F$.  Thus $M^3$ is universal if and only if $F=W^2$.
\end{itemize}
\end{ex}

% Anything missing?

We summarize the results for Example \ref{GenEx} for the case where $R$ is a CK domain.

\begin{ex} \label{CKex}
Let $R = K+WX + F[[X]]X^2$ where $K \subseteq F$ is an extension of finite fields and $W$ is a $K$-subspace of $F$.  Let $M$ be the maximal ideal of $R$.  We have $R^c = R' = F[[X]]$ and $[M:M]=[W:_F W] + F[[X]]X$.  Here $[W:_F W]$ is an intermediate field of $K \subseteq F$.  Let $\{a_{\alpha}\}_{\alpha \in \Lambda}$ be a complete set of representatives of $F^*/K^*$ (or equivalently, of the one-dimensional $K$-subspaces of $F$).  Let $\Omega = \{\alpha \in \Lambda \; | \; a_{\alpha} \in W\}$.  So $|\Lambda| = (|F|-1)/(|K|-1)$ and $|\Omega| = (|W|-1)/(|K|-1)$.  Let $\{b_{\beta}\}_{\beta \in \Gamma}$ be a complete set of representatives of $F/W$, so $|\Gamma|=|F|/|W|$.  Here $G(R) \cong \Z \oplus F^* / K^* \oplus F/W$ and we can identify $V$ with $[W:_F W]^* / K^* \oplus F/W$.  So $|V| = ((|[W:_F W]|-1) / (|K|-1) ) (|F| / |W|)$.

\begin{itemize}
\item[(1)]  $R$ is a DVR $\iff$ $K=W=F$.  (Here we don't need that $F$ is finite.)

\item[(2)]  $M^2$ is universal $\iff$ $W=F$.  In this case $\{a_{\alpha}X\}_{\alpha \in \Lambda}$ is a complete set of nonassociate elements of $R$.  So the number of nonassociate atoms is $(|F|-1)/(|K|-1)$.  There is one $V$-class of nonassociate atoms.

\item[(3)]  Suppose $W=0$, so $R=K+F[[X]]X^2$.  Then $\{a_{\alpha}X^2 + bX^3 \; | \; \alpha \in \Lambda, b \in F\}$ (resp., $\{a_{\alpha}X^3 + bX^4 \; | \; \alpha \in \Lambda, b \in F\}$) is a complete set of nonassociate atoms of $R$ of order $2$ (resp., order $3$).  So the number of nonassociate atoms of $R$ is $2 ((|F|-1)/(|K|-1)) |F|$.  We have $M^3$ is universal, but $M^2$ is not.  There are no atoms in $M^2$.  Here $[M:M]=F[[X]]$ and $V \cong F^* /K^* \oplus F$.  So $|V| = ((|F|-1)/(|K|-1))|F|$ and there are two $V$-classes of nonassociate atoms.

\item[(4)]  Suppose that $0 \subsetneq W \subsetneq F$.  Then $\{a_{\alpha}X + a_{\alpha}b_{\beta}X^2 \; | \; \alpha \in \Omega, \beta \in \Gamma \}$ is a complete set of nonassociate atoms of $R$ of order $1$.  Their cardinality is $(|W|-1)/(|K|-1)(|F|/|W|)$.  And $\{a_{\alpha}X^2 + a_{\alpha} b_{\beta}X^3 \; | \; \alpha \in \Lambda, a_{\alpha} \in F^* \backslash W^2, \beta \in \Gamma \}$ is a complete set of nonassociate atoms of $R$ of order $2$.  Their number is $m (|F|/|W|)$ where $m = |\{a_{\alpha} \; | \; \alpha \in \Lambda, a_{\alpha} \in F^* \backslash W^2\}| = |\{aK^* \; | \; a \in F^* \backslash W^2\}| = |\{Ka \; | \; a \in F^* \backslash W^2 \}|$.  So $R$ has $((|W|-1)/(|K|-1)+m)(|F|/|W|)$ nonassociate atoms.  There is an atom in $M^2$ (and hence in $M^2 \backslash M^3$) if and only if $W^2 \subsetneq KW^2$.  In this case $\{a_{\alpha}X^2 + a_{\alpha} b_{\beta}X^3 \; | \; \alpha \in \Lambda, a_{\alpha} \in KW^2 \backslash W^2, \beta \in \Gamma \}$ is a complete set of nonassociate atoms in $M^2$.  The cardinality of this set is $m' (|F|/|W|)$ where $m' = |\{a_{\alpha} \; | \; \alpha \in \Lambda, a_{\alpha} \in KW^2 \backslash W^2 \}|=|\{aK^* \; | \; a \in KW^2 \backslash W^2 \}|=|\{Ka \: | \; a \in KW^2 \backslash W^2 \}|$.  We have $M^3$ is universal if and only if $F=W^2$.  Otherwise $M^4$ is universal.  Here $K \subseteq [W:_F W] \subseteq F$ is an intermediate field and $|V| = (|[W:_F W]|-1)/(|K|-1)(|F|/|W|)$.
\end{itemize}
\end{ex}

We specialize further to the case where $W$ is an intermediate field $L$, $K \subseteq L \subseteq F$, with $F$ still finite.

\begin{ex} \label{Wint}
Let $L$ be an intermediate field of the field extension $K \subseteq F$ where $F$ is finite and let $R = K + LX + F[[X]]X^2$.  So $M = LX + F[[X]]X^2$ is the maximal ideal of $R$ and $[M:M]=L + F[[X]]X$.  Here $M^4$ is universal, but $M^3$ is universal $\iff$ $M^2$ is universal $\iff$ $L=F$.  There are no atoms in $M^2$.  We see that $R$ has $((|F|-1) / (|K|-1)) |F| /|L|$ nonassociate atoms, $|V| = (|L|-1) / (|K|-1)$, so there are $((|F|-1) / (|L|-1)) |F| / |L|$ $V$-classes of nonassociate atoms.
\end{ex}

We now give our example of a local CK domain $(R,M)$ with $8$ atoms having an atom in $M^2$.

\begin{ex} \label{8atom}
Let $\{1,y,y^2\}$ be a GF(2)-basis for GF($2^3$) where $y$ is a zero of the irreducible polynomial $Y^3+Y+1 \in GF(2)[Y]$.  Let $W$ be the subspace of GF($2^3$) spanned by $1$ and $y$ and $R = GF(2)+WX + GF(2^3)[[X]]X^2$.  (This is \cite[Example 7.3]{AMO} for the case $n=2$.)  Here $W^2 = \{0,1,y,1+y,y^2,1+y^2,y+y^2\}$, so $GF(2)W^2 = GF(2^3)$.  Now $\{1,y,1+y,y^2,1+y^2,y+y^2,1+y+y^2 \}$ is a complete set of representatives of the one-dimensional $GF(2)$-subspaces of $GF(2^3)$ and those lying in $W$ are $\{1,y,1+y\}$.  We take $0,1+y+y^2$ as a complete set of representatives of $GF(2^3) / W$.  So the order $1$ atoms are $X, X+(1+y+y^2)X^2, yX, yX+y(1+y+y^2)X^2 = yX+(1+y^2)X^2, (1+y)X, \text{ and } (1+y)X + (1+y)(1+y+y^2)X^2 = (1+y)X + yX^2$.  These $6$ atoms are in $M \backslash M^2$.  Now $GF(2) W^2 \backslash W^2 = F \backslash W^2 = \{1+y+y^2\}$.  So the two remaining atoms of $R$ are $(1+y+y^2)X^2$ and $(1+y+y^2)X^2 + (1+y+y^2)^2X^3 = (1+y+y^2)X^2 + (1+y)X^3$, both of which lie in $M^2 \backslash M^3$.   Here $[W:_{GF(2)} W] = GF(2)$ since $1 \in W$, so $[M:M]=GF(2) + GF(2^3)[[X]]X$.  So we can take $1$ and $1+(1+y+y^2)X$ as representations of $V = U([M:M]) /U(R)$.  Here the $V$ classes are $\{X, X(1+(1+y+y^2)X)\}, \{yX, yX(1+(1+y+y^2)X)\}, \{(1+y)X, (1+y)X(1+(1+y+y^2)X)\}, \text{ and } \{(1+y+y^2)X, (1+y+y^2)X(1+(1+y+y^2)X)\}$.  Finally, since $GF(2^3) \neq W^2$, $M^3$ is not universal, but $M^4$ is universal.
\end{ex}

More generally we have the following example whose proof is similar to that of Example \ref{8atom}.  It is interesting to note that except for the case of $p=2$ or $3$, and $n=1$, the local CK domain $(R,M)$ in this example has more nonassociate atoms in $M^2$ than in $M \backslash M^2$.

\begin{ex} \label{ExAtomsInM2}
Let $p$ be a prime number and $n$ a natural number.  Let $\{1,y,y^2\}$ be a GF($p^n$)-basis for GF($p^{3n}$) where $y$ is a zero of an irreducible cubic $f(Y) \in \text{GF}(p^n)[Y]$.  Let $W$ be the GF($p^n$)-subspace $\text{GF}(p^n)\cdot 1 + \text{GF}(p^n) \cdot y$ and $R$ be the ring $R = \text{GF}(p^n) + WX + \text{GF}(p^{3n})[[X]]X^2$.  Then $(R,M)$ is a local CK domain with $M^4$ universal (but $M^3$ is not universal).  Here $|W^2| = \frac{p^{3n}}{2}+p^{2n}-\frac{p^n}{2}$ so $|\text{GF}(p^3) \backslash W^2 | = p^{3n} - \left( \frac{p^{3n}}{2}+p^{2n}-\frac{p^n}{2} \right) = \frac{p^n (p^n-1)^2}{2}$.  So $|\{\text{GF}(p^{3n})^*a \; | \; a \in \text{GF}(p^{3n}) \backslash W^2 \} | = \frac{p^n(p^n-1)}{2}$.  So from Example \ref{GenEx} we have

\begin{align*}
|V| = p^n, \\
p^n(p^n+1) & \text{ nonassociate atoms in } M \backslash M^2, \\
\frac{p^{2n}(p^n-1)}{2} & \text{ nonassociate atoms in } M^2 \backslash M^3,\text{ and }\\
\frac{p^n(p^{2n}+p^n+2)}{2} & \text{ total nonassociate atoms.}
\end{align*}

For small values of $p^n$ we have:

\begin{center}
  \begin{tabular}{ | c | c | c | c |}
    \hline
    $p^n$ & Atoms in $M \backslash M^2$ & Atoms in $M^2$ & Atoms \\ \hline
    2 & 6 & 2 & 8 \\ \hline
    3 & 12 & 9 & 21 \\ \hline
    4 & 20 & 24 & 44 \\ \hline
    5 & 30 & 50 & 80 \\ \hline
    7 & 56 & 147 & 203 \\ \hline
    8 & 72 & 224 & 296 \\ \hline
    9 & 90 & 324 & 414 \\ \hline
  \end{tabular}
\end{center}
\end{ex}

After pointing out an error in \cite{CK}, it is time to point out a mathematical error and typographical error in \cite{AMO} and to note a partial correction.  In \cite[Theorem 7.1]{AMO} it stated for $R = K+V_1X + \cdots + V_{n-1} X^{n-1}+F[[X]]X^n$ where $K \subseteq F$ is a field extension and $V_1,\ldots,V_{n-1}$ are $K$-subspaces of $F$ with $V_i V_j \subseteq V_{i+j}$ for $i+j \leq n-1$, that $G(R) \cong F^* / K^* \oplus F/V_1 \oplus \cdots \oplus F / V_{n-1}$.  In the proof it is alleged that the map $\pi: U(F[[X]]) \to F^* / K^* \oplus F/V_1 \oplus \cdots \oplus F / V_{n-1}$ given by $\pi (a_0 (1+a_1 X + a_2X^2 + \cdots)) = (a_0K^*,a_1+V_1,\ldots,a_{n-1}+V_{n-1})$ is a homomorphism.  However, this is only the case for $n=1$.  Thus we only have the isomorphism $G(K+WX + F[[X]]X^2) \cong \Z \oplus F^* / K^* \oplus F/W$ which is given in Example \ref{GenEx}.  Corollary 7.2 of \cite{AMO} concerns the special case where $R = K+F[[X]]X^n$.  The assertion that $G(R) \cong F^* / K^* \oplus F^{n-1}$ is only valid for $n=1,2$.  Also, there is a typographical error in giving the number of nonassociate atoms of $R$ as $n |F^*/K^*| |F|^n$ where obviously the correct number is $n |F^* /K^*| |F|^{n-1}$.  The proof given is correct once the typographical error is corrected.  However, as we will use this example, we state the correct result with a self-contained proof.

\begin{ex} \label{Corrected}
Let $K \subseteq F$ be a field extension, $n \geq 1$, and $R = K+F[[X]]X^n$.  Then $R$ is a BFD but is local (resp., a CK domain) if and only if $[F:K] < \infty$ (resp., $K=F$ and $n=1$ or $F$ is finite).  Let $\{a_{\alpha}\}_{\alpha \in \Lambda}$ be a complete set of representatives of $F^*/K^*$.  Then $\{a_{\alpha}X^i + b_1 X^{i+1} + \cdots + b_{n-1}X^{i+n-1} \; | \; \alpha \in \Lambda, b_i \in F, n \leq i \leq 2n-1\}$ is a complete set of nonassociate atoms of $R$.  Thus $R$ has $n |F^*/K^*| |F|^{n-1}$ nonassociate atoms.  We have $M^3$ is universal, but $M^2$ is not universal unless $n=1$.  There are no atoms in $M^2$ where $M$ is the maximal ideal of $R$.  It follows from \cite[Corollary 5.6]{AMO} that atoms have the form $u(X)X^i$ where $u(X) \in U(F[[X]])$ and $n \leq i \leq 2n-1$ and the number of nonassociate atoms is $n |F^*/K^*| |U(F[[X]]/U(R)|^{n-1}$.  But we prove this directly.  It is easy to see that atoms of $R$ have the form $a_i X^i + a_{i+1}X^{i+1}+ \cdots$ where $a_i \neq 0$ and $n \leq i \leq 2n-1$.  But $a_i X^i + a_{i+1}X^{i+1} + \cdots = u(X) X^i$ where $u(X)=a_i + a_{i+1}X + \cdots \in U(F[[X]])$.  Note that $u(X)X^i \sim v(X)X^j \iff i=j$ and $u(X)U(R) = v(X)U(R)$.  But $|U(F[[X]])/U(R)| = |F^*/K^*||F|^{n-1}$ as seen by the bijection given by $\left(\sum_{m=0}^{\infty} a_m X^m \right) U(R) \leftrightarrow (a_0 K^*,a_0^{-1}a_1,\ldots,a_0^{-1}a_{n-1})$ between $U(F[[X]])/U(R)$ and $F^*/K^* \oplus F^{n-1}$.  (This map is an isomorphism only for $n \leq 2$.)
\end{ex}

Let $(R,M)$ be  a local CK domain with $\overline{R}=R/M$ and $k = \text{dim}_{\overline{R}} M/M^2$.  Then by Theorem \ref{QLatom} (6) $R$ has at least $(|\overline{R}|^k-1) / (|\overline{R}|-1)$ nonassociate atoms.  Thus the number of nonassociate atoms gives an upper bound for $|\overline{R}|$ and $k$.  However, for a given $|\overline{R}|$ and $k$, the number of nonassociate atoms can be arbitrarily large.  We illustrate this for the case $\overline{R} \cong GF(2)$ and $k=2$.

\begin{ex}
(An example of a local CK domain $(R,M)$ with $\overline{R} = R/M \cong GF(2)$ and $\text{dim}_{\overline{R}} M/M^2 = 2$ having more than $2^{n-1}+1$ nonassociate atoms for $n \geq 2$)  Let $f$ be a principal prime of $GF(2) [[X,Y]]$ with $\text{ord}(f) = n \geq 2$.  Take $R = GF(2)[[X,Y]] / (f)$ so $R$ is a complete local CK domain with $\overline{R} \cong GF(2)$ and having $\text{dim}_{\overline{R}} M/M^2 = 2$.  Suppose that $R$ has $m$ nonassociate atoms $a_1,\ldots,a_m$.  Taking $f_i \in GF(2) [[X,Y]]$ with $\overline{f_i} = a_i$, we have $M = (a_1)\cup \cdots \cup (a_m)$ and hence $(X,Y) = (f_1,f) \cup \cdots \cup (f_m,f) = (f_1) + (X,Y)^n \cup \cdots \cup (f_m) + (X,Y)^n$.  We claim that $m \geq 2^{n-1}+1$.  Put $S = GF(2)[[X,Y]] / (X,Y)^n$ and $N = (X,Y) / (X,Y)^n$.  Then $N$ is a union of $m$ principal ideals.  But $|S|=2^{\frac{n(n+1)}{2}}$, $|N|=2^{\frac{(n-1)(n+2)}{2}}$, and if $Sa$ is a proper principal ideal, $$|Sa| = |S| / |\text{ann}(a)| \leq |S| / |N^{n-1}| \leq 2^{(n(n+1)/2} / 2^n = 2^{(n-1)n/2}.$$  Since two principal ideals have nonempty intersection, $$m \geq |N| / |Sa| + 1 = 2^{(n-1)(n+2)/2} / 2^{(n-1)n/2}+1 = 2^{n-1}+1.$$  So $R$ has at least $2^{n-1}+1$ nonassociate atoms.
\end{ex}

The following well known diagram appears in \cite{AAZ} where examples are given to show that none of the implications can be reversed.

\[
\xymatrix{
&HFD \ar@{=>}[rd] \\
UFD \ar@{=>}[ru] \ar@{=>}[rd] & &BFD \ar@{=>}[r] &ACCP \ar@{=>}[r] &Atomic\\
&FFD \ar@{=>}[ru]}
\] 

Let us extend this diagram for the case of a quasilocal domain $(R,M)$.

\[
\xymatrix{
& CK \ar@{=>}[r] \ar@{=>}[d] \ar@{=>}@/_6pc/[lddd] & Noetherian \ar@{=>}[d] \\
HFD \ar@{=>}[r] & RBFD \ar@{=>}[rd] & M^{\omega}=0 \ar@{=>}[r] \ar@{=>}[d] & M^{\alpha}=0 \ar@{=>}[d] \\
UFD \ar@{=>}[u] \ar@{=>}[d] & & BFD \ar@{=>}[r] &ACCP \ar@{=>}[r] & Atomic \\
FFD \ar@{=>}[rru]}
\] 

Here $R$ is a RBFD if $R$ has finite elasticity $\rho(R)$.  A one-dimensional local domain is a RBFD if and only if $R$ is analytically irreducible \cite[Theorem 2.12]{AA}.  Also, a one-dimensional local domain is an FFD if and only if $R$ is a DVR or $R/M$ is finite \cite[Corollary 6]{AMU}.  Thus a one-dimensional local domain is a CK domain if and only if it is an FFD and RBFD.  Except for the implication $M^{\alpha}=0$ $\implies$ ACCP, we give examples of quasilocal domains showing that the implications cannot be reversed.  In fact, in all cases except $M^{\omega}=0$ $\implies$ BFD we give one-dimensional quasilocal examples.  (1) BFD $\notimplies M^{\omega} = 0$:  Localizing \cite[Example 5.7]{HLV} gives an example of a quasilocal Krull domain $(R,M)$ with $\bigcap_{n=1}^{\infty} M^n \neq 0$.  But a Krull domain is a BFD.  (2) atomic $\notimplies$ ACCP: Gram's example \cite{G} of an atomic domain not satisfying ACCP is a one-dimensional quasilocal domain.  (3) ACCP $\notimplies$ BFD, $M^{\alpha} =0 \notimplies M^{\omega}=0$: take \cite[Example 2.1]{AAZ} $K[X;T]$, $K$ a field and $T$ the additive submonoid of $\Q^+$ generated by $\{1/p | p \text{ is prime }\}$, localized at $N = \{f \in K[X;T] | f \text{ has nonzero constant term}\}$.  (4) $M^{\omega} =0 \notimplies$ Noetherian: $\Q+\C[[X]]X$. (5) HFD $\notimplies$ UFD, BFD $\notimplies$ FFD, RBFD $\notimplies$ CK, Noetherian $\notimplies$ CK: $\R+\C[[X]]$.  (6) FFD $\notimplies$ UFD, RBFD $\notimplies$ HFD, FFD $\notimplies$ CK: $K[[X^2,X^3]]$, $K$ a field (with $K$ infinite for FFD $\notimplies$ CK). (7) BFD $\notimplies$ RBFD: any one-dimensional local domain that is not analytically irreducible.

We end by investigating local CK domains with exactly three nonassociate atoms.  As we know, in this case $M^2$ is universal.  In the complete equi-characteristic case it is easy to completely characterize such integral domains.  We begin with the following more general result.

\begin{thm}
Let $(R,M)$ be a complete local domain with residue field $\overline{R}$.  Suppose that $M^2$ is universal and char $R =$ char $\overline{R}$.  So the integral closure $R'$ is a complete DVR and hence $R' \cong F[[X]]$ where $F$ is a subfield of $R'$ that maps isomorphically onto the residue field of $R'$.  Then $R \cong K + F[[X]]X$ where $K = R \cap F$ is a subfield of $R$ isomorphic to $\overline{R}$.  Also, $R$ is a CK domain if and only if $K=F$ (and hence $R \cong F[[X]]$ is a DVR) or $\overline{R}$ is finite.
\end{thm}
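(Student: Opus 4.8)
The plan is to first pin down the structure of the integral closure $R'$ via Theorem \ref{QLdoms}, then to realize $R$ explicitly as a subring of $R'$, and finally to read off the CK criterion from Example \ref{GenEx}.

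First I would note that since $R$ is a (Noetherian) local domain and not a field, Nakayama's Lemma gives $M \neq M^2$, so Theorem \ref{QLdoms}(7) applies and the hypothesis that $M^2$ is universal forces $R' = [M:M]$ to be a DVR whose maximal ideal is exactly $M$. As $R'$ is integral over $R$ and is a DVR, $\dim R = \dim R' = 1$, so $R$ is a one-dimensional complete local domain. I would then invoke the module-finiteness of the integral closure of a complete local Noetherian domain (equivalently, Noether normalization together with the classical finiteness of integral closure over a complete DVR) to conclude that $R'$ is a finite $R$-module and hence complete. Since $M$ is an ideal of $R'$ contained in $R$, the inclusion $R/M \hookrightarrow R'/M$ identifies $\overline R$ with a subfield of the residue field of $R'$; in particular $\operatorname{char} R' = \operatorname{char} R = \operatorname{char}\overline R = \operatorname{char}(R'/M)$, so $R'$ is equicharacteristic. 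Cohen's structure theorem then provides a coefficient field $F \subseteq R'$ mapping isomorphically onto $R'/M$, and fixing a uniformizer $X$ (a generator of the principal ideal $M$) yields $R' = F[[X]]$.

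Next I would realize $R$ inside $R' = F[[X]]$. Because $M$ is simultaneously the maximal ideal of $R$ and of $R'$, we have $F[[X]]X = M \subseteq R$. Setting $K = R \cap F$, for any $r \in R$ I would write $r = a_0 + Xg$ with $a_0 \in F$ the constant term and $Xg \in F[[X]]X = M \subseteq R$, whence $a_0 = r - Xg \in R$ and so $a_0 \in K$. This shows $R = K + F[[X]]X = K + M$, and since no nonzero constant lies in $M = XF[[X]]$ we have $K \cap M = 0$; hence the projection $R \to R/M$ restricts to an isomorphism $K \xrightarrow{\sim} \overline R$. Thus $R = K + F[[X]]X$ with $K \cong \overline R$ a subfield of $F$, as asserted.

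Finally, $R$ now has exactly the shape studied in Example \ref{GenEx} with $W = F$. Since $R$ is local, hence Noetherian, Example \ref{GenEx} gives $[F:K] < \infty$, so $F$ is finite precisely when $K \cong \overline R$ is finite (as $|F| = |K|^{[F:K]}$). The CK criterion of that example in the case $W = F$ then says $R$ is a CK domain if and only if $R$ is a DVR --- that is, $K = F$, whence $R = F + F[[X]]X = F[[X]]$ --- or $F$ is finite, which under $[F:K] < \infty$ is equivalent to $\overline R$ being finite; this is exactly the claimed equivalence. I expect the main obstacle to be the general-theory input needed to describe $R'$: establishing that $R'$ is a \emph{complete} DVR requires both the module-finiteness of the integral closure of a complete local Noetherian domain and Cohen's structure theorem. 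Once $R' = F[[X]]$ is in hand, the identification of $R$ and the CK criterion are routine consequences of Theorem \ref{QLdoms} and Example \ref{GenEx}.
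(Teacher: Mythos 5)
Your proof is correct and follows essentially the same route as the paper's: Theorem \ref{QLdoms}(7) to identify $R'=[M:M]$ as a DVR with maximal ideal $M$, Cohen's structure theorem applied to the (complete, equicharacteristic) ring $R'$ to write $R'=F[[X]]$, the decomposition $R=K+F[[X]]X$ with $K=R\cap F\cong\overline{R}$, and previously established results for the final CK equivalence. The only differences are in the level of detail and citation: you justify why $R'$ is complete (which the paper asserts without proof) and derive the last equivalence from Example \ref{GenEx} together with $[F:K]<\infty$, whereas the paper argues directly that a CK domain with infinite residue field must be a DVR.
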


\begin{proof}
Since $M^2$ is universal, $R'=[M:M]$ is a DVR with maximal ideal $M$ by Theorem \ref{QLdoms} (7).  So $R'$ is a complete DVR with char $R' =$ char $R'/M$.  So $R' \cong F[[X]]$ where $F$ is a subfield of $R'$ that maps isomorphically onto $R'/M$.  Now $R'$ has maximal ideal $M=F[[X]]X$.  Let $K=R \cap F$, so $F[[X]]X \subseteq R$ gives $R = K+F[[X]]X$.  Since $K \subseteq F$ is integral, $K$ is a field and clearly $K$ is isomorphic to $\overline{R}$.

Certainly if $K=F$ (and hence $R$ is a DVR) or $\overline{R}$ is finite, $R$ is a CK domain.  Conversely, suppose that $R$ is a CK domain.  If $\overline{R}$ is infinite, $R$ must be a DVR and hence $K=F$.
\end{proof}

\begin{cor}
Let $(R,M)$ be a complete local CK domain with exactly three nonassociate atoms with char $R= $ char $\overline{R}$.  Then $R \cong GF(2) + GF(2^2)[[X]]X$.
\end{cor}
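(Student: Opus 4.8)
The plan is to combine the structure theorem immediately preceding this corollary with the atom count for rings of the form $K + F[[X]]X$. First I would observe that since $3$ is prime, Corollary \ref{2p} (1) gives that $M^2$ is universal. As $R$ is a complete local domain with $M^2$ universal and char $R =$ char $\overline{R}$, the preceding theorem applies and yields $R \cong K + F[[X]]X$, where $K = R \cap F$ is a field isomorphic to $\overline{R}$ and $F$ is the residue field of the integral closure $R'$. A DVR has exactly one atom up to associates, so having three nonassociate atoms forces $R$ not to be a DVR; hence $K \neq F$, and by the final clause of that theorem $\overline{R} \cong K$ must then be finite.

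Next I would count the nonassociate atoms. The ring $R \cong K + F[[X]]X$ is exactly the case $W = F$ of Example \ref{GenEx} (equivalently Example \ref{CKex} (2)), in which $M^2$ is universal and $\{a_{\alpha}X\}_{\alpha \in \Lambda}$, for $\{a_{\alpha}\}_{\alpha \in \Lambda}$ a complete set of representatives of $F^*/K^*$, is a complete set of nonassociate atoms. Thus the number of nonassociate atoms of $R$ equals $|F^*/K^*| = (|F|-1)/(|K|-1)$, and this must be $3$.

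Finally I would solve this numerically. Writing $q = |K|$ and $m = [F:K]$, so that $|F| = q^m$, the atom count becomes $(q^m-1)/(q-1) = 1 + q + \cdots + q^{m-1} = 3$. If $m = 1$ this sum is $1$ (the excluded DVR case); if $m \geq 3$ it is at least $1 + 2 + 4 = 7 > 3$; so $m = 2$, giving $1 + q = 3$ and hence $q = 2$. Therefore $K \cong GF(2)$ and $F \cong GF(2^2)$, so $R \cong GF(2) + GF(2^2)[[X]]X$, as claimed.

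There is no serious obstacle here; the argument is a chaining of already-established results together with one elementary numerical step. The only points requiring genuine care are the passage from primeness of the atom count to the universality of $M^2$ (Corollary \ref{2p}), the remark that three atoms rules out the DVR case and hence forces $\overline{R}$ finite so that the CK criterion of the structure theorem is satisfied, and the observation that $(q^m-1)/(q-1) = 3$ has $q = 2$, $m = 2$ as its only solution with $q$ a prime power and $m \geq 2$.
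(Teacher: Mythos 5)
Your proposal is correct and follows essentially the same route as the paper: primality of $3$ gives $M^2$ universal via Corollary \ref{2p}, the preceding structure theorem then gives $R \cong K + F[[X]]X$ with $K \subseteq F$ finite fields (since $R$ is not a DVR), and the atom count $(|F|-1)/(|K|-1)=3$ forces $K \cong GF(2)$, $F \cong GF(2^2)$. The only cosmetic difference is that you spell out the numerical step $(q^m-1)/(q-1)=3 \Rightarrow q=m=2$ explicitly, which the paper leaves to the reader.
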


\begin{proof}
Now $R = GF(2)+GF(2^2)[[X]]X$ is a complete local CK domain with char $R= $ char $\overline{R}$ having exactly $|GF(2^2)^*/GF(2)^*|=3$ nonassociate atoms.  Conversely, suppose that $(R,M)$ is a complete local CK domain with exactly 3 non-associate atoms.  Since $3$ is prime, $M^2$ is universal.  Since $R$ is not a DVR, $\overline{R}$ must be finite.  So $R \cong GF(p^n) + GF(p^{nk})[[X]]X$ for some prime $p$ (= char $\overline{R}$) and $k \geq 2$.  Now $R$ has $3 = |GF(p^{nk})^* / GF(p^n)^*|$ nonassociate atoms.  So we must have $p=2$ and $k=2$.  Thus $R \cong GF(2) + GF(2^2)[[X]]X$.
\end{proof}

There is another way to realize $R = GF(2)+GF(2^2)[[X]]X$.  Here $GF(2^2) = \{0,1,\alpha,1+\alpha\}$ where $\alpha^2 = \alpha+1$.  We claim that $$R \cong GF(2)[[X,Y]] / (X^2+XY+Y^2).$$  Here is a sketch.  The map $\phi: GF(2)[[X,Y]] \to R$ given by $\phi(X)=X$ and $\phi(Y)=\alpha X$ is an epimorphism.  Since $R$ is an integral domain, $\text{ker}(\phi)$ is a prime ideal of $GF(2)[[X,Y]]$.  Now $\phi(X^2+XY+Y^2) = X^2 + \alpha X^2 + \alpha^2 X^2 = (1+\alpha+\alpha^2)X^2 = 0$, so $(X^2+XY+Y^2) \subseteq \text{ker}(\phi)$.  But since $(X^2+XY+Y^2)$ is a prime ideal, we have $\text{ker}(\phi) = (X^2+XY+Y^2)$.  Here we have realized a local CK domain with precisely $3$ nonassociate atoms as a homomorphic image of a two-dimensional regular local ring.  We next generalize this result.

\begin{thm} \label{Image}
Let $(D,M)$ be a two-dimensional regular local domain with $M = (x_1,x_2)$ and let $f$ be a principal prime of $D$.  Then $D/(f)$ is an integral domain with precisely three nonassociate atoms if and only if $D/M \cong \text{GF}(2)$ and $f = u_1x_1^2 + u_2 x_1 x_2 + u_3 x_2^2$ where $u_1,u_2,u_3$ are units.
\end{thm}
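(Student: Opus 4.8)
The plan is to reduce the statement to a question about the number of atoms and then to read that number off the leading form of $f$. Since $f$ is a principal prime, $R=D/(f)$ is automatically an integral domain, and being a quotient of the Noetherian ring $D$ it is a one-dimensional Noetherian (hence atomic) local domain with maximal ideal $\mathfrak m=M/(f)$ and residue field $\overline R=R/\mathfrak m\cong D/M$; thus only the count of nonassociate atoms is at issue. Write $e=\operatorname{ord}(f)$ for the largest integer with $f\in M^{e}$, and let $f^{\ast}\in M^{e}/M^{e+1}$ denote the leading form of $f$ inside $\operatorname{gr}_{M}(D)\cong (D/M)[T_{1},T_{2}]$. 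First I note that $e=1$ forces $R$ to be a regular local ring of dimension one, i.e. a DVR with a single atom; since we are after three atoms this case is excluded on both sides, so I may assume $e\ge 2$. In that case $f\in M^{2}$ gives $\mathfrak m/\mathfrak m^{2}=M/M^{2}$, so $k:=\dim_{\overline R}\mathfrak m/\mathfrak m^{2}=2$.

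The heart of the proof is a \emph{leading-form criterion for universality}: assuming $\overline R$ finite and $e\ge2$, I claim $\mathfrak m^{2}$ is universal if and only if $e=2$ and no nonzero linear form over $\overline R$ divides $f^{\ast}$. To prove it I would use the description ``$\mathfrak m^{2}$ universal $\iff a\mathfrak m=\mathfrak m^{2}$ for every $a\in\mathfrak m\setminus\mathfrak m^{2}$'' from Theorem~\ref{QLdoms}(5). Lifting $a$ to $\tilde a\in M\setminus M^{2}$ and using $f\in M^{2}$, this translates to $M^{2}\subseteq\tilde aM+(f)$ for every $\tilde a\in M\setminus M^{2}$. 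Since $M^{2}$ is finitely generated and $\tilde aM+(f)\subseteq M^{2}$, Nakayama's Lemma lets me test this modulo $M^{3}$: writing $\ell\in M/M^{2}$ for the class of $\tilde a$ (a nonzero linear form) and noting $Df\equiv \overline R\,f^{\ast}\pmod{M^{3}}$, the condition becomes $\ell\cdot(M/M^{2})+\overline R\,f^{\ast}=M^{2}/M^{3}$ for every nonzero $\ell$. Because $\dim_{\overline R}M^{2}/M^{3}=3$ while $\ell\cdot(M/M^{2})$ is two-dimensional (multiplication by $\ell\neq0$ is injective on linear forms in the domain $\operatorname{gr}_M(D)$), this forces $f^{\ast}$ to have degree exactly $2$ (were $e\ge3$, the image of $f$ in $M^{2}/M^{3}$ would vanish and the left-hand side could only be two-dimensional) and to satisfy $f^{\ast}\notin \ell\cdot(M/M^{2})$, i.e. $\ell\nmid f^{\ast}$, for every nonzero $\ell$. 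The delicate points — that universality over all of $\mathfrak m\setminus\mathfrak m^{2}$ reduces to the finitely many directions $\ell$, and the careful reduction of $Df$ modulo $M^{3}$ — are where I expect the real work to lie.

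Next I would record the relevant classification of binary quadratic forms: over $\overline R=\mathrm{GF}(2)$ the only degree-two form divisible by no linear form is $T_{1}^{2}+T_{1}T_{2}+T_{2}^{2}$, equivalently the form with all three coefficients nonzero. Since any $f\in M^{2}=(x_1^2,x_1x_2,x_2^2)$ may be written $f=a_1x_1^2+a_2x_1x_2+a_3x_2^2$ with the residues $\overline{a_i}$ equal to the coefficients of $f^{\ast}$, the condition ``$f^{\ast}$ has no linear factor over $\mathrm{GF}(2)$'' is exactly ``each $a_i$ is a unit,'' i.e. $f=u_1x_1^2+u_2x_1x_2+u_3x_2^2$ with $u_i$ units.

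With the criterion in hand both implications are quick. For $(\Leftarrow)$: the hypotheses give $e=2$ and $f^{\ast}=x_1^2+x_1x_2+x_2^2$ irreducible over $\mathrm{GF}(2)$, so by the criterion $\mathfrak m^{2}$ is universal; then $R$ is a CK domain by Theorem~\ref{CKequiv}(4), and since $\mathfrak m^{2}$ is universal with $k=2$ a complete set of atoms is given by representatives of the one-dimensional $\overline R$-subspaces of $\mathfrak m/\mathfrak m^{2}$ (Theorem~\ref{QLdoms}(5)), of which there are $(|\overline R|^{k}-1)/(|\overline R|-1)=3$. For $(\Rightarrow)$: three atoms makes $R$ a non-DVR local CK domain, so $\mathfrak m^{2}$ is universal because $3$ is prime (Corollary~\ref{2p}(1)); the same count then reads $(|\overline R|^{k}-1)/(|\overline R|-1)=3$, which with $k\ge2$ forces $|\overline R|=2$ and $k=2$, whence $\overline R\cong\mathrm{GF}(2)$. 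Finally the universality criterion yields $e=2$ and $f^{\ast}$ with no linear factor, and the classification above rewrites this as $f=u_1x_1^2+u_2x_1x_2+u_3x_2^2$ with $u_1,u_2,u_3$ units, completing the plan.
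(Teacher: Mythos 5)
Your proposal is correct, but it follows a genuinely different route from the paper's. The paper never formulates a leading-form criterion: having observed that three atoms force $D/M \cong \text{GF}(2)$ and $f \in M^2$, it translates \enquote{precisely three nonassociate atoms} directly into the covering condition $M = (x_1,f) \cup (x_2,f) \cup (x_1+x_2,f)$, writes $f = a_1x_1^2 + a_2x_1x_2 + a_3x_2^2$, computes the three ideals explicitly (e.g.\ $(x_1,f) = (x_1,a_3x_2^2)$, $(x_2,f)=(x_2,a_1x_1^2)$), and argues by hand in both directions: when all $a_i$ are units each ideal becomes $(x_1)+M^2$, $(x_2)+M^2$, $(x_1+x_2)+M^2$ and these cover $M$; conversely, if $a_1 \in M$ the test element $x_2+x_1^2$ lies in none of the three ideals, the case $a_3 \in M$ follows by symmetry, and the case $a_2 \in M$ is reduced to the previous ones by the change of variables $y_1=-x_1$, $y_2=x_1+x_2$. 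You instead prove a general lemma --- for a prime $f$ of order at least $2$, $\mathfrak{m}^2$ is universal in $D/(f)$ if and only if $\operatorname{ord}(f)=2$ and the leading form $f^{*}$ has no linear factor over $D/M$ --- via Theorem \ref{QLdoms}(5)(d), Nakayama, and a dimension count in $\operatorname{gr}_M(D) \cong (D/M)[T_1,T_2]$, and then finish with the paper's own machinery (Corollary \ref{2p} since $3$ is prime, plus the count of one-dimensional subspaces of $\mathfrak{m}/\mathfrak{m}^2$); your sketch of the lemma is sound, since the reduction modulo $M^3$ makes the condition depend only on the direction $\ell$, the image of $Df$ in $M^2/M^3$ is exactly the line $\overline{R}f^{*}$ (or $0$ when $\operatorname{ord}(f)\geq 3$), and multiplication by $\ell \neq 0$ is injective on linear forms because $\operatorname{gr}_M(D)$ is a domain. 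What your route buys: it is more conceptual and strictly more general --- the criterion works over any residue field, identifies the obstruction geometrically as the tangent cone of $D/(f)$ possessing a rational line, and would equally classify the quotients of $D$ with $|\overline{R}|+1$ atoms for any finite residue field, the $\text{GF}(2)$ coefficient classification then being a triviality. What the paper's route buys: it is entirely elementary and self-contained ideal arithmetic, needing no associated graded ring and not even Corollary \ref{2p}, though its test elements and change of variables are ad hoc and do not generalize as transparently.
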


\begin{proof}
Note that if $f \in M \backslash M^2$, $\overline{D}:=D/(f)$ is a DVR, and if $\overline{D}$ has exactly three nonassociate atoms, then $D/M \cong \text{GF}(2)$ by Theorem \ref{QLatom} (2).  Now $\overline{D}$ has precisely three nonassociate atoms $\iff$ $\overline{M} = (\overline{x_1}) \cup (\overline{x_2}) \cup (\overline{x_1}+\overline{x_2})$ $\iff$ $M = (x_1,f) \cup (x_2,f) \cup (x_1+x_2,f)$.  Let $f = a_1 x_1^2 + a_2 x_1 x_2 + a_3 x_2^2$.  Then $(x_1,f) = (x_1,a_3x_2^2)$, $(x_2,f) = (x_2,a_1x_1^2)$, and $(x_1+x_2,f) = (x_1+x_2,(a_3-a_2-a_1)x_1x_2)$.

$(\impliedby)$  Suppose that $a_1, a_2$, and $a_3$ are units.  Then $D/M \cong \text{GF}(2)$ gives that $a_3-a_2-a_1$ is a unit.  So $(x_1,f) \cup (x_2,f) \cup (x_1+x_2,f) = (x_1) + M^2 \cup (x_2)+M^2 \cup (x_1+x_2)+M^2 = M$ since $(x_1)+M^2$, $(x_2)+M^2$, and $(x_1+x_2)+M^2$ are the one-dimensional $D/M$-subspaces of $M / M^2$.  $(\implies)$  Suppose that $$M=(x_1,f) \cup (x_2,f) \cup (x_1+x_2,f).$$  If $a_1 \in M$, $(x_2,f) \subseteq (x_2)+M^3$, so $M = (x_1)+M^2 \cup (x_2)+M^3 \cup (x_1+x_2)+M^2$.  Consider $x_2+x_1^2$.  Now $x_2+x_1^2 \in (x_1)+M^2$ $\implies$ $x_2 \in (x_1)+M^2$ and $x_2+x_1^2 \in (x_1+x_2)+M^2$ $\implies$ $x_2 \in (x_1+x_2)+M^2$, both contradictions.  And $x_2+x_1^2 \in (x_2)+M^3$ $\implies$ $M^2 \subseteq (x_2)+M^3$, a contradiction.  Interchanging $x_1$ and $x_2$ shows that $a_3 \in M$ leads to a contradiction.  So $a_1$ and $a_3$ must be units.  Suppose that $a_2 \in M$.  Then $a_3-a_2-a_1 \in M$.  Thus $M = (x_1,f) \cup (x_2,f) \cup (x_1+x_2,f)$ gives $M = (x_1) + M^2 \cup (x_2)+M^2 \cup (x_1+x_2)+M^3$.  Put $y_1 = -x_1$, $y_2 = x_1+x_2$, so $y_1+y_2 = x_2$ and $(y_1,y_2)=M$.  Now $M = (y_1)+M^2 \cup (y_2)+M^3 \cup (y_1+y_2)+M^2$, a contradiction.
\end{proof}

With regard to the element $f$ in Theorem \ref{Image}, we have the following proposition.

\begin{prop}
Let $(D,M)$ be a two-dimensional regular local domain with $M=(x_1,x_2)$.  Suppose that $D/M \cong \text{GF}(2)$.  Then

\begin{itemize}
\item[(1)] $f = u_1x_1^2 + u_2x_1x_2 + u_3x_2^2$ where $u_1,u_2,u_3$ are units if and only if $f = x_1^2 + x_1x_2 + x_2^2 + g$ for some $g \in M^3$.

\item[(2)]  For units $u_1,u_2,u_3$ of $R$, $f = u_1x_1^2 + u_2x_1x_2 + u_3 x_2^3$ is a nonzero principal prime.
\end{itemize}
\end{prop}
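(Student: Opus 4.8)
The plan is to handle the two parts separately: the first by a direct computation exploiting that $D/M \cong \text{GF}(2)$, and the second through the associated graded ring together with unique factorization in $D$.

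For part (1), the key observation is that, because the residue field is $\text{GF}(2)$, an element of $D$ is a unit if and only if it is congruent to $1$ modulo $M$. For the forward implication I would write $u_i = 1 + m_i$ with $m_i \in M$ and note that
\[ f - (x_1^2 + x_1 x_2 + x_2^2) = m_1 x_1^2 + m_2 x_1 x_2 + m_3 x_2^2 \in M \cdot M^2 = M^3, \]
so $g := f - (x_1^2 + x_1 x_2 + x_2^2)$ lies in $M^3$. For the converse, since $M^3 = M \cdot M^2$ and $M^2 = (x_1^2, x_1 x_2, x_2^2)$, every $g \in M^3$ can be written $g = \alpha x_1^2 + \beta x_1 x_2 + \gamma x_2^2$ with $\alpha, \beta, \gamma \in M$; then $f = (1+\alpha)x_1^2 + (1+\beta)x_1 x_2 + (1+\gamma)x_2^2$, and each coefficient is a unit. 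The only care needed is the claim that the coefficients may be taken in $M$ rather than merely in $D$, which is exactly what $M^3 = M \cdot M^2$ provides.

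For part (2) (reading the exponent on the final term as $2$, to agree with part (1) and Theorem \ref{Image}), note first that $f \in M^2$ is a nonunit and that modulo $M^3$ we have $f \equiv x_1^2 + x_1 x_2 + x_2^2$, so the leading form of $f$ in the associated graded ring $\text{gr}_M(D) \cong \text{GF}(2)[T_1, T_2]$ (a polynomial ring, since $D$ is regular of dimension $2$) is $Q := T_1^2 + T_1 T_2 + T_2^2$. As $Q \neq 0$, we get $f \notin M^3$; in particular $f \neq 0$ and the $M$-adic order $\text{ord}(f)$ is exactly $2$. Because $\text{gr}_M(D)$ is a domain, $\text{ord}$ is additive, $\text{ord}(ab) = \text{ord}(a) + \text{ord}(b)$ for nonzero $a,b$, since the product of the nonzero leading forms is the leading form of $ab$. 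Now a regular local ring is a UFD, so it suffices to show $f$ is irreducible. If $f = gh$ with $g,h$ nonunits, then $g,h \in M$ and $\text{ord}(g) + \text{ord}(h) = 2$ with each summand $\geq 1$, forcing $\text{ord}(g) = \text{ord}(h) = 1$; hence the leading forms of $g$ and $h$ are nonzero linear forms whose product is $Q$. But $Q$ is irreducible over $\text{GF}(2)$, as it has no projective zero ($Q(1,0) = Q(0,1) = Q(1,1) = 1$) and so is not a product of two linear forms. This contradiction shows $f$ is irreducible, hence a nonzero prime element, and $(f)$ is a principal prime ideal.

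The main obstacle is the structural input to part (2): one must justify the identification $\text{gr}_M(D) \cong \text{GF}(2)[T_1,T_2]$ and the additivity of the $M$-adic order (both resting on regularity of $D$ and on $\bigcap_n M^n = 0$ from the Krull intersection theorem), and then invoke the nontrivial fact that regular local rings are UFDs. Everything else reduces to the short observation that the specific binary quadratic form $T_1^2 + T_1 T_2 + T_2^2$ is irreducible over $\text{GF}(2)$.
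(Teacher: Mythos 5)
Your proposal is correct, and part (1) matches the paper's argument almost verbatim (the paper additionally absorbs the middle coefficient into the outer two, writing $g = ax_1^2 + bx_2^2$ with $a,b \in M$, to note that one may take $u_2 = 1$; your version keeps all three coefficients, which is equally valid). You also correctly read the exponent in part (2) as $x_2^2$; the paper's own proof treats it that way.

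For part (2), however, you take a genuinely different route. The paper stays entirely elementary: it first proves, by a direct divisibility argument using only that $x_1, x_2$ are principal primes in the UFD $D$, the lemma that $a_1x_1^2 + a_2x_1x_2 + a_3x_2^2 = 0$ forces $a_1,a_2,a_3 \in M$ (explicitly avoiding an appeal to analytic independence); it then supposes $f = (Ax_1+Bx_2)(Cx_1+Dx_2)$, compares coefficients via that lemma to conclude $AC$, $AD+BC$, $BD$ are units, hence $A,B,C,D$ are units, and derives a contradiction since $AD+BC \in M$ when the residue field is $\mathrm{GF}(2)$. You instead pass to the associated graded ring $\mathrm{gr}_M(D) \cong \mathrm{GF}(2)[T_1,T_2]$, use additivity of the $M$-adic order to reduce to factoring the initial form, and observe that $T_1^2+T_1T_2+T_2^2$ has no zero over $\mathrm{GF}(2)$ and hence no linear factor. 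The two arguments are conceptually parallel --- the paper's lemma is precisely the degree-two freeness of $\mathrm{gr}_M(D)$, and the final coefficient contradiction is precisely the anisotropy of the quadratic form --- but they buy different things. The paper's computation is self-contained, requiring nothing beyond the (easy, two-dimensional) UFD property; your argument imports the structure theorem for $\mathrm{gr}_M$ of a regular local ring and the Krull intersection theorem, but in exchange it isolates the $\mathrm{GF}(2)$-specific content in one line and visibly generalizes: any element of $M^2$ whose initial form is an irreducible quadratic over the residue field is prime.
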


\begin{proof}
(1) $(\implies)$  Since $D/M \cong \text{GF}(2)$, $u_i = 1+m_i$ for some $m_i \in M$.  Then $f=x_1^2 + x_1x_2+x_2^2 + g$ where $g = m_1x_1^2 + m_2x_1x_2 + m_3 x_2^2 \in M^3$.  $(\impliedby)$  Suppose $f = x_1^2 + x_1x_2 + x_2^2 + g$ where $g \in M^3$.  Note that $g = ax_1^2 + bx_2^2$ for some $a,b \in M$.  Then $f = x_1^2 + x_1x_2 + x_2^2 + g = (1+a)x_1^2 + x_1x_2 + (1+b)x_2^2$ where $a+1$, $1+b$ are units.  (Note that this shows that we can take $u_2=1$.)

(2)  Note that $D$ is a UFD and $x_1,x_2$ are principal primes.  (The simple proof that a two-dimensional regular local ring is a UFD does not require the more general result that a regular local ring is a UFD.)  We first note that $a_1 x_1^2 + a_2 x_1 x_2 + a_3 x_2^2 = 0$ implies $a_1,a_2,a_3 \in M$.  While this follows from analytic independence, we give a simple proof.  Suppose that $a_1x_1^2 + a_2 x_1 x_2 + a_3 x_2^2 = 0$ and say $a_2$ is a unit.  Then $a_2 x_1 x_2 = -a_1 x_1^2 - a_3 x_2^2$ gives $x_1 | a_3 x_2^2$.  So $x_1 | a_3$.  Likewise $x_2 | a_1$.  So dividing by $x_1 x_2$ gives $a_2 \in M$, a contradiction.  Similar proofs show that $a_1,a_3 \in M$.  Let $f = u_1 x_1^2 + u_2 x_1 x_2 + u_2 x_2^2$ where $u_1,u_2,u_3$ are units.  So $f \neq 0$.  We show that $f$ is irreducible and hence prime.  Suppose that $f = (Ax_1 + Bx_2)(Cx_1 + Dx_2)$.  Then $(u_1-AC)x_1^2 + (u_2 - (AD+BC))x_1x_2 + (u_3-BD)x_2^2 = 0$.  So $u_1 - AC, u_2-(AD+BC),u_2-BD \in M$.  Thus $AC$, $AD+BC$, $BD$ are units.  Hence $A,B,C,D$ are units.  But then $AD$ and $BC$ are units, so $AD+BC \in M$ since $D/M \cong \text{GF}(2)$.
\end{proof}

Let $(R,m)$ be a complete local CK domain with precisely three nonassociate atoms.  Then $R/m \cong \text{GF}(2)$ and $\text{dim}_{R/m} m/m^2 = 2$, so $R$ is a homomorphic image of a two-dimensional complete regular local ring $(D,M)$ with residue field $\text{GF}(2)$.  So if $\text{char} R = 2$, $D \cong \text{GF}(2) [[X,Y]]$ while if char $R =0$, $D \cong V[[X]]$ (with $M = (2,X)$) or $D \cong V[[X,Y]] / (g)$ (with $M = (\overline{X},\overline{Y})$) where $g = 2-h$ with $h \in (2,X,Y)^2$ and $(V,2V)$ is a complete DVR with residue field $\text{GF}(2)$.  In the first case char $R=2$, $R \cong \text{GF}(2)[[X,Y]] / (X^2 + XY + Y^2)$ (as $\text{GF}(2) [[X,Y]] / (X^2 + XY + Y^2)$ is such a ring and any such one is isomorphic to $\text{GF}(2) + \text{GF}(2^2)[[X]]X$).  In the second case where char $R = 0$ and $2 \notin m^2$, $R \cong V[[X]] / (u_1 \cdot 4 + u_2 \cdot 2X + u_3 X^2)$ where $u_1,u_2,u_3$ are units of $V[[X]]$.  In the third case where char $R=0$ and $2 \in m^2$, $R \cong V[[X,Y]] / (g,u_1X^2+u_2XY + u_3Y^2)$ where $g = 2-h$, $h \in (2,X,Y)^2$ and $u_1,u_2,u_3$ are units of $V[[X,Y]]$.

\section{CK Domains with $n$ Atoms}

Let $R$ be an integral domain.  We say that $R$ is \emph{primefree} if $R$ has no nonzero principal primes.  Let $\alpha$ be a possibly infinite cardinal number.  We say that \emph{$R$ has $\alpha$ atoms} if there is a set $A$ of atoms of $R$ with $|A|=\alpha$ such that every atom of $R$ is an associate of exactly one element of $A$.  In this section we are interested in local CK domains or more generally primefree CK domains with a prescribed number of atoms. However, we begin by noting that for an infinite cardinal number $\alpha$, there exists a one-dimensional local domain with $\alpha$ atoms.

\begin{ex} \label{AlphaInf}
(A one-dimensional local domain with $\alpha$ atoms for $\alpha$ infinite)  Let $(D,N)$ be a one-dimensional local domain that is not a DVR with $|D| \leq \alpha$.  Let $\{X_{\beta}\}_{\beta \in \Lambda}$ be a set of indeterminates with $|\Lambda|= \alpha$ and let $R = D(\{X_{\beta}\}_{\beta \in \Lambda})$ be the Nagata ring $D[\{X_{\beta}\}_{\beta \in \Lambda}]_{N[\{X_{\beta}\}_{\beta \in \Lambda}]}$.  Then $R$ is a one-dimensional local domain with maximal ideal $M=RN$.  As $|R|=\alpha$, $R$ has at most $\alpha$ atoms.  Let $m_1,m_2$ be part of a minimal basis for $N$.  Then for $\beta_1,\beta_2 \in \Lambda$, $\beta_1 \neq \beta_2$, $R(m_1+m_2X_{\beta_2},m_1+m_2X_{\beta_2}) = R(m_1,m_2)$.  So $m_1+m_2X_{\beta_1}$, $m_1 + m_2X_{\beta_2}$ is part of a minimal basis for $M$.  Thus $m_1 + m_2 X_{\beta_1}$ and $m_1 + m_2 X_{\beta_2}$ are nonassociate atoms of $R$.  Hence $R$ has $\alpha$ atoms.
\end{ex}

Thus we have the following question.

\begin{question}
For which natural numbers $n$, does there exist a local CK domain with $n$ atoms?
\end{question}

Now for $n=1$ we just have a DVR and by Theorem \ref{QLatom} (7) a local CK domain cannot have $2$ atoms. So suppose $n \geq 3$.  Cohen and Kaplansky \cite{CK} showed that if $n = (p^{nk}-1) / (p^n-1)$ where $p$ is prime and $m,k \geq 1$, there is a (complete) local CK domain $(R,M)$ with $M^2$ universal having $n$ atoms. We can construct $R$ as follows. Let $D$ be a DVR with residue field $\text{GF}(p^{nk})$ and take $R$ to be the pullback of the natural map $D \to \text{GF}(p^{nk})$ along $\text{GF}(p^m) \hookrightarrow \text{GF}(p^{mk})$. So such an $R$ can have characteristic $0$ or characteristic $p$. In the equicharacteristic complete case $R$ has the form $\text{GF}(p^m) + \text{GF}(p^{mk})[[X]]X$ (Example \ref{Wint}). This was generalized in \cite{AMO} (see Example \ref{Corrected}): $R = \text{GF}(p^m) + \text{GF}(p^{mk})[[X]]X^l$, $m,k,l \geq 1,$ is a complete local CK domain with $l((p^{mk}-1)/(p^m-1))p^{mk(l-1)}$ atoms.

Suppose that $n \geq 3$ is prime. Then there is a local CK domain $(R,M)$ with $n$ atoms if and only if $M^2$ is universal if and only if $n = (p^{mk}-1) / (p^m-1)$ for some prime $p$ and natural numbers $m,k$ with $k \geq 2.$ The first odd prime not of this form is $11$. So there is no local CK domain with $11$ atoms. (As we will see there are local CK domains with $n$ atoms for $n=3,4,\ldots,10$.) The odd primes less than $100$ not of this form are $11$, $19$, $23$, $29$, $37$, $41$, $43$, $47$, $59$, $67$, $71$, $73$, $79$, $83$, $89$, and $97$. Clark, Gosavi, and Pollack \cite{CGP} have noted that among the prime numbers, the set of primes of the form $(p^{mk}-1)/(p^m-1)$ has density $0$.

We have constructed three infinite families of positive characteristic CK domains

\begin{itemize}
    \item[(1)]  (Example \ref{Wint})  $\text{GF}(p^m) + \text{GF}(p^{mk})X + \text{GF}(p^{mkl})X^2$ \hspace*{\fill} $m,k,l \geq 1$
    \item[(2)]  (Example \ref{ExAtomsInM2})  $\text{GF}(p^m) + WX + GF(p^{3m})[[X]]X^2$ \hspace*{\fill} $m \geq 1$
    \item[(3)]  (Example \ref{Corrected})  $\text{GF}(p^m) + \text{GF}(p^{mk})[[X]]X^l$ \hspace*{\fill} $m,k,l \geq 1$
\end{itemize}

The following table gives the number of atoms, the number of atoms in $M^2$ (for (2) necessarily not in $M^3$), and the minimal power of $M$ which is universal for each family.

\begin{center}
  \begin{tabular}{ | c | c | c | c |}
    \hline
    \thead{Family} & Atoms & Atoms in $M^2$ & Universality \\ \hline
    1 & $\frac{(p^{mkl}-1)p^{mk(l-1)}}{p^m-1}$ & 0 & \makecell{$M^4$, $M^3 = M^2$ $\iff$ $l=1$ \\ $M \iff k=l=1$} \\ \hline
    2 & $\frac{p^m(p^{2m}+p^m+2)}{2}$ & $\frac{p^{2m}(p^m-1)}{2}$ & $M^4$ \\ \hline
    3 & $\frac{l(p^{mk}-1)p^{mk(l-1)}}{p^m-1}$ & 0 & \makecell{$M^3$, $M^2 \iff l=1$ \\ $M \iff k=1=1$} \\ \hline
  \end{tabular}
\end{center}

Suppose that we take $l=1$ in Family $1$ or $3$, so $M^2$ is universal. For $n < 100$, this gives local CK domains with $M^2$ universal for $n=1$, 3, 4, 5, 6, 7, 8, 9, 10, 12, 13, 14, 15, 17, 18, 20, 21, 24, 26, 28, 30, 31, 32, 33, 38, 40, 42, 44, 48, 50, 54, 57, 60, 62, 63, 65, 68, 72, 74, 80, 82, 84, 90, 91, 98.

For Family $1$ if we take $l > 1$, then $M^4$ is universal. For $n < 100$, we get local CK domains with $n=6$, 12, 20, 28, 30, 56, and $60$ atoms.

For Family $2$, $M^4$ is universal and for $n < 100$ we get local CK domains with $n=8,$ 21, 44, and $80$ atoms.

For Family $3$ if we take $l \geq 2$, then $M^3$ is universal. We get local CK domains with $n$ atoms for $l=2$: $n=4$, 6, 8, 10, 14, 16, 18, 22, 24, 26, 32, 34, 38, 46, 50, 54, 58, 62, 64, 72, 74, 82, 86, 94, and 98; $l=3$: $n=2$, 27, 48, and $75$; $l=4$: $n=32$.

Thus for $n < 100$, after deleting the primes $n$ with no local CK domain with $n$ atoms, it is unknown to us whether there exist local CK domains with $n$ atoms for $n=25$, 35, 36, 39, 45, 51, 52, 53, 55, 69, 70, 76, 77, 78, 81, 85, 87, 88, 92, 93, 95, 96, or 99.

So far most of our examples of local CK domains have been in characteristic $p$. Coykendall and Spicer \cite{CS} and Clark, Govasi, and Pollack \cite{CGP} gave some examples in characteristic $0$ from number theory. We begin with this following example.

\begin{ex} \label{Zpw}
 (\cite[Corollary 3.6]{CS}).  Let $d$ be a square free integer and $\Z[\omega]$ be the ring of integers of $\Q[\sqrt{d}]$. Let $p_1,\ldots,p_n$ be distinct primes that are inert in $\Z[\omega]$ and put $p=p_1 \cdots p_n$. Let $R = \Z[p \omega]_S$ where $S = R \backslash (p_1, p \omega) \cup \cdots \cup (p_n, p \omega)$. Then $R$ is a primefree CK domain with $n$ maximal ideals  $M_i = (p_i,p \omega)_S$, $i=1,\ldots,n$. Each $R_{M_i} = \Z[p \omega]_{(p_i,p \omega)}$ is a local CK domain with $p_i+1$ atoms. Thus $R$ has $\sum_{i=1}^n (p_i+1)$ atoms (see Theorem \ref{Lattice} (1)). Here one can show that $R_{M_i}$ is a local CK domain via Theorem \ref{CKequiv} ((1) $\iff$ (3)). It is easily checked that $M_{i_{M_i}}^2$ is universal. Since $M_{M_i}$ is doubly generated and $R_{M_i}$ has residue field $\text{GF}(p_i)$, $R_{M_i}$ has $(p_i^2-1)/(p_i-1) = p_i+1$ atoms.
\end{ex}

We next investigate the local CK domains with $n$ atoms for $n \leq 11$.

\begin{ex}
Local CK domain $(R,M)$ with $n \leq 11$ atoms.

\begin{itemize}
    \item[$n=1$:] $R$ is a DVR
    
    \item[$n=2$:] $R$ does not exist (Theorem \ref{QLatom} (7))
    
    \item[$n=3$:] $3$ is prime so $M^2$ is universal with $|\overline{R}|=2$ and $\text{dim}_{\overline{R}} M / M^2 = 2$. Examples include $\Z[2 \omega]_{(2,2 \omega)}$ with $2$ inert in $\Z[\omega]$ (say for example, $d=5$) (see Example \ref{Zpw}) in characteristic $0$ and the unique complete local example $\text{GF}(2)+\text{GF}(2^2)[[X]]X$ in positive characteristic.
    
    \item[$n=4$:] No atoms in $M^2$ and $M^3$ is universal, $|\overline{R}|=2$ or $3$ and $\text{dim}_{\overline{R}} M / M^2 = 2$.
    
    $M^2$ universal: Examples include $\Z[3 \omega]_{(3,3 \omega)}$ with $3$ inert in $\Z[\omega]$ (say for $d=5$) in characteristic $0$ and the unique complete local example $\text{GF}(3)+\text{GF}(3^2)[[X]]X$ in positive characteristic.
    
    $M^3$ universal: $\text{GF}(2) + \text{GF}(2)[[X]]X^2$
    
    \item[$n=5$:] $5$ is prime so $M^2$ is universal with $|\overline{R}| = 4$ and $\text{dim}_{\overline{R}} M / M^2 = 2$. Here $\text{GF}(2^2) + \text{GF}(2^4)[[X]]X$ is the unique complete local example in positive characteristic. For a characteristic $0$ example, let $D$ be a DVR of characteristic $0$ with residue field $\text{GF}(2^2)$ and take $R$ to be the pullback of the natural map $D \to \text{GF}(2^2)$ along $\text{GF}(2) \hookrightarrow \text{GF}(2^2)$.
    
    \item[$n=6$:] There are no atoms in $M^2$ and $M^5$ is universal, $|\overline{R}| \leq 5$, and $\text{dim}_{\overline{R}} M / M^2 = 2$.
    
    $M^2$ universal: $|\overline{R}|=5$. In positive characteristic $\text{GF}(5)+\text{GF}(5^2)[[X]]X$ is the unique example. In characteristic $0$ we can take $\Z[5 \omega]_{(5,5 \omega)}$ where $5$ is inert in $\Z[\omega]$ (say for $d=13$)
    
    $M^3$ universal: $\text{GF}(3) + \text{GF}(3)[[X]]X^2$
    
    $M^4$ universal: $\text{GF}(2) + \text{GF}(2)X + \text{GF}(2^2)[[X]]X^2$
    
    $M^5$ universal: no example known
    
    \item[$n=7$:] $7$ is prime so $M^2$ is universal with $|\overline{R}|=2$ and $\text{dim}_{\overline{R}} M  M^2 = 3$. Here $\text{GF}(2) + \text{GF}(2^3)[[X]]X$ is the unique complete local example in positive characteristic. For a characteristic $0$ example, take a DVR $D$ with residue field $\text{GF}(2^3)$ and take $R$ to be the pullback of the natural map $D \to \text{GF}(2^3)$ along $\text{GF}(2) \hookrightarrow \text{GF}(2^3)$.
    
    \item[$n=8$:] Here $M^7$ is universal, $|\overline{R}| \leq 7$ and $\text{dim}_{\overline{R}} M / M^2 = 2$ except for $|\overline{R}|=2$ and $\text{dim}_{\overline{R}} M / M^2 = 3$. Since there are at least $6$ atoms in $M \backslash M^2$, either all atoms are in $M \backslash M^2$ or there are $6$ atoms in $M \backslash M^2$ and $2$ in $M^2$. The case $|\overline{R}|=5$ is ruled out since this implies $|V| \geq 5$ so $|V|=8$ which gives $M^2$ universal, a contradiction.
    
    $M^2$ universal: $|\overline{R}|=7$ and $\text{dim}_{\overline{R}} M / M^2 = 2$. So $\text{GF}(7) + \text{GF}(7^2)[[X]]X$ is the unique complete local example in positive characteristic and $\Z[7 \omega]_{(7,7 \omega)}$ with $7$ inert in $\Z[\omega]$ (say $d=5$) is a characteristic $0$ example.
    
    $M^2$ not universal: Here $|V|=2$ or $4$. If $|V|=4$ all $8$ atoms are in $M \backslash M^2$. For $|V|=4$, $|\overline{R}| \leq 4$. If $|\overline{R}|=3$ or $4$, $\text{dim}_{\overline{R}} M / M^2 = 2$.
    
    $M^3$ universal example: $\text{GF}(2^2) + \text{GF}(2^2)[[X]]X^2$ (no atoms in $M^2$)
    
    $M^4$ universal example: $\text{GF}(2) + WX + \text{GF}(2^3)[[X]]X^2$ (Example \ref{8atom}) (6 atoms in $M \backslash M^2$, 2 atoms in $M^2$).
    
    \item[$n=9$:] Here $|V|=9$ in which case $M^2$ is universal or $|V|=3$. There are either $9$ atoms in $M \backslash M^2$ or $6$ atoms in $M \backslash M^2$ and $3$ in $M^2$.
    
    $M^2$ universal: Here $|\overline{R}|=8$ and $\text{dim}_{\overline{R}} M / M^2 = 2$. We have that $\text{GF}(2^3) + \text{GF}(2^6)[[X]]X$ is the unique complete local example in positive characteristic. In characteristic $0$ we can take a DVR with residue field $\text{GF}(2^6)$ and take $R$ to be the pullback of the natural map $D \to \text{GF}(2^6)$ along $\text{GF}(2^3) \hookrightarrow \text{GF}(2^6)$. 
    
    $M^2$ is not universal: So $|V|=3$. So $|\overline{R}|=2$ or $3$. And we have that  $\text{dim}_{\overline{R}} M / M^2 = 2$ unless  $|\overline{R}|=2$ and $\text{dim}_{\overline{R}} M / M^2 = 3$. Suppose $|\overline{R}|=3$. Then we cannot have an atom in $M^2$ since an atom in $M^2$ would give at least $4 \cdot 3$ atoms in $M \backslash M^2$. Suppose $|\overline{R}|=2$. Here either all $9$ atoms are in $M \backslash M^2$ or $6$ are in $M \backslash M^2$ and $3$ in $M^2$. If $\text{dim}_{\overline{R}} M / M^2 = 3$, all atoms are in $M \backslash M^2$. Cohen and Kaplansky claimed that for $n=9$, we cannot have atoms in $M^2$. We have been unable to verify this.
    
    \item[$n=10$:] Here $|V|=2, 5$, or 10.  If $|V|=10$, $M^2$ is universal. Here $|\overline{R}|=9$ and also $\text{dim}_{\overline{R}} M / M^2 = 2$. So in the positive characteristic case $\text{GF}(3^2) + \text{GF}(3^4)[[X]]X$ is the unique complete local example. A characteristic $0$ example can be obtained via pullbacks. If $|V|=5$, then all ten atoms are in $M \backslash M^2$. The only remaining case $|V|=2$ forces $|\overline{R}|=2$ and $\text{dim}_{\overline{R}} M / M^2 = 2$ or $3$. If $\text{dim}_{\overline{R}} M / M^2 = 3$, there are at least $7$ atoms in $M \backslash M^2$ and hence all atoms are in $M \backslash M^2$. Suppose that $\text{dim}_{\overline{R}} M / M^2 = 2$. If there is an atom in $M^2$, there are at least $6$ atoms in $M \backslash M^2$. Thus either all $10$ atoms are in $M \backslash M^2$, $8$ are in $M \backslash M^2$ and $2$ in $M^2$, or $6$ in $M \backslash M^2$ and $4$ in $M^2$.
    
    \item[$n=11$:] $11$ is prime and not of the form $(p^{nk}-1) / (p^n-1)$. So an example does not exist.
\end{itemize}
\end{ex}

We end by considering the case of nonlocal CK domains. Using Example \ref{Zpw}, Coykendall and Spicer \cite{CS} showed that for $n=\sum_{i=1}^m (p_i+1)$ for distinct primes $p_1, \ldots, p_n$ there is a primefree CK domain with $n$ atoms. Assuming a variant of the Goldbach Conjecture (each even number $n \geq 6$ is a sum of two distinct primes), they show that for $n \geq 3$, there is a primefree CK domain with $3$ or less maximal ideals with $n$ atoms. Then Clark, Gosavi, and Pollack \cite[Theorem 1.4]{CGP} showed using a generalization of Bertrand's Postulate that each $n \geq 6$ can be written as $\sum_{i=1}^m (p_i+1)$ for distinct primes $p_1,\cdots,p_n$. Using Theorem \ref{Lattice} (2), they obtained a number of interesting results concerning primefree CK domains with $n$ atoms and $m$ maximal ideals. We list some of their results.

\begin{thm}
Let $m$ and $n$ be positive integers.

\begin{itemize}
\item[(1)] \cite[Theorem 1.6]{CGP} If $n \geq 10$ is even (resp., $n \geq 13$ is odd) and $m \in [3,\frac{n}{3}]$ (resp., $m \in [4,\frac{n}{3}])$, there is a primefree CK domain of characteristic $0$ with $n$ atoms and $m$ maximal ideals.

\item[(2)] \cite[Theorem 1.11]{CGP} Let $q$ be a prime power. If $q$ is even (resp., $q$ is odd), then for all $n \geq 2q^2-q$ (resp., $n \geq 2q^2 - q + 1$), there is a primefree CK domain with $n$ atoms that is a $\text{GF}(q)$-algebra.
\end{itemize}
\end{thm}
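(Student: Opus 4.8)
Both parts are established in \cite{CGP}; the plan is to reduce each to an arithmetic representation problem and then invoke the realization machinery of Theorem \ref{Lattice} (2). That theorem produces, from any compatible finite list of local CK domains none of which is a DVR (so that no maximal ideal is principal and the result is primefree), a single primefree CK domain whose maximal ideals correspond to the list and whose total number of atoms is, by Theorem \ref{Lattice} (1), the sum of the local atom counts. Thus in both parts the ring theory is packaged into the building blocks and the gluing, and the real work is to decompose $n$ as a sum of $m$ (part 1) or of suitably many (part 2) admissible local atom counts.

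For (1) the admissible pieces are characteristic-$0$ local CK domains that are not DVRs. Example \ref{Zpw} supplies, for each prime $p$ inert in a suitable $\Z[\omega]$, such a ring with exactly $p+1$ atoms, and pullbacks over characteristic-$0$ DVRs with prescribed finite residue field supply the remaining small counts $3,4,5,\dots$; repeats are permitted since distinct maximal ideals may carry isomorphic localizations. The representation problem becomes: write $n$ as a sum of exactly $m$ such counts, each $\ge 3$. The constraint $m\le n/3$ is forced by each summand being $\ge 3$, while the thresholds $n\ge 10$ (even), $n\ge 13$ (odd) together with $m\ge 3$, resp.\ $m\ge 4$, delimit the range in which the decomposition can always be produced. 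Establishing the decomposition throughout this range, in particular avoiding the sparse non-realizable values such as $11$ by redistributing among the $m$ parts, is the arithmetic heart of \cite{CGP}; it rests on a quantitative (Bertrand-type) control of primes in short intervals, together with the routine fact that a finite set of primes can be made simultaneously inert in a common real quadratic field by choosing the discriminant via the Chinese Remainder Theorem and quadratic reciprocity.

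For (2) I would restrict to pieces that are $\mathrm{GF}(q)$-algebras. Two convenient ones are $\mathrm{GF}(q)+\mathrm{GF}(q^2)[[X]]X$ (Example \ref{Wint}), with $q+1$ atoms, and $\mathrm{GF}(q)+\mathrm{GF}(q)[[X]]X^2$ (Example \ref{Corrected}), with $2q$ atoms; both are $\mathrm{GF}(q)$-algebras and neither is a DVR, so any gluing of copies of them through Theorem \ref{Lattice} (2) is a primefree $\mathrm{GF}(q)$-algebra CK domain. Using $a$ copies of the first and $b$ of the second realizes $a(q+1)+b(2q)$ atoms, so it suffices to represent $n$ as a nonnegative combination of $q+1$ and $2q$. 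For $q$ even these are coprime, and the Chicken McNugget (Frobenius) theorem makes every $n$ exceeding $(q+1)(2q)-(q+1)-2q=2q^2-q-1$ representable, i.e.\ every $n\ge 2q^2-q$. For $q$ odd both generators are even, so I would also admit an odd count such as the $q^2+q+1$ atoms of $\mathrm{GF}(q)+\mathrm{GF}(q^3)[[X]]X$ (Example \ref{Wint}); writing odd $n$ as $(q^2+q+1)$ plus an even element of $\langle q+1,2q\rangle$, the largest unattainable value is $2q^2-q$, which gives the threshold $2q^2-q+1$.

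The main obstacle in (1) is the combinatorial number theory: producing a decomposition into exactly $m$ admissible counts for every $m$ in the prescribed interval is considerably more delicate than the unconstrained statement used in \cite{CS}, and it is here that the generalized Bertrand postulate of \cite{CGP} is essential, the inertness condition and the gluing being comparatively routine. In (2) the obstacle is the selection of enough small $\mathrm{GF}(q)$-algebra local atom counts of the correct gcd so that the numerical semigroup they generate has Frobenius number exactly $2q^2-q-1$ (resp.\ so that the odd residues are covered from $2q^2-q+1$ on); the constants in the two thresholds are precisely these Frobenius numbers.
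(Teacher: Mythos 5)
The first thing to say is that the paper contains no proof of this theorem at all: it is stated purely as a quotation of \cite[Theorems 1.6 and 1.11]{CGP}, the only methodological hint being the preceding remark that Clark, Gosavi, and Pollack obtained these results by combining Theorem \ref{Lattice} (2) with a generalization of Bertrand's Postulate. Your architecture — decompose $n$ arithmetically into admissible local atom counts, then glue non-DVR local CK domains into a primefree CK domain via Theorem \ref{Lattice} (2), the atom counts adding and primefreeness following because no block is a DVR — is exactly the route the paper attributes to \cite{CGP}, so in shape you agree with it. Where you go beyond the paper is part (2), and there your argument is complete and correct: the blocks $\mathrm{GF}(q)+\mathrm{GF}(q^2)[[X]]X$, $\mathrm{GF}(q)+\mathrm{GF}(q)[[X]]X^2$, and $\mathrm{GF}(q)+\mathrm{GF}(q^3)[[X]]X$ are $\mathrm{GF}(q)$-algebras, none is a DVR, and by Examples \ref{Wint} and \ref{Corrected} they have $q+1$, $2q$, and $q^2+q+1$ atoms respectively; for $q$ even the coprime pair $q+1,\,2q$ has Frobenius number $2q^2-q-1$, while for $q$ odd every element of $\langle q+1,2q\rangle$ is twice an element of $\langle (q+1)/2,\,q\rangle$, so the largest even gap is $q^2-2q-1$, and shifting by the odd generator $q^2+q+1$ makes $2q^2-q$ the last odd gap (three copies of $q^2+q+1$ already exceed it). Both thresholds therefore come out sharp and exactly as stated, confirming your remark that the constants are precisely Frobenius numbers.

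Part (1), however, is not a proof but a reduction plus a citation. You correctly observe that each non-DVR local CK domain has at least three atoms (Theorem \ref{QLatom} (4),(7)), forcing $m\le n/3$, and that repeats are harmless under Theorem \ref{Lattice} (2) (so the quadratic-reciprocity/inertness device of Example \ref{Zpw} is not even needed — pullbacks of characteristic-$0$ DVRs with residue field $\mathrm{GF}(p^2)$ along $\mathrm{GF}(p)\hookrightarrow\mathrm{GF}(p^2)$ serve as well). But the heart of the matter — that every even $n\ge 10$ with $3\le m\le n/3$, and every odd $n\ge 13$ with $4\le m\le n/3$, admits a decomposition into exactly $m$ admissible characteristic-$0$ counts, where the admissible counts form a sparse set (values $(p^{ab}-1)/(p^a-1)$; note $11$ is excluded) — is simply declared to be \enquote{the arithmetic heart of \cite{CGP}}. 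That step is the entire content of \cite[Theorem 1.6]{CGP} and your sketch does not establish it. Since the paper itself proves nothing here, this does not put you behind the paper, but you should be clear that only your part (2) stands as a self-contained argument; part (1) remains an appeal to the cited reference.
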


We end with the following result that shows that for CK domains we can usually reduce to the complete local domain case. Here the first statement is well known while the second is just a restatement of a result of Clark, Gosavi, and Pollack \cite[Theorem 1.10]{CGP}.

\begin{thm} \label{Lattice}
\begin{itemize}
    \item[(1)] Let $R$ be a CK domain with maximal ideals $M_1,\ldots,M_n$. Then the map
    
    $$L(R) \to L(\widehat{R_{M_1}}) \times \cdots \times L(\widehat{R_{M_n}})$$
    
    given by
    
    $$A \to (\widehat{R_{M_1}}A,\ldots,\widehat{R_{M_n}}A)$$
    
    is a multiplicative lattice isomorphism that induces an order preserving monoid isomorphism of the positive cones of the corresponding groups of divisibility
    
    \begin{align*}
        G_+(R) \to G_+(\widehat{R_{M_1}}) \times \cdots \times G_+(\widehat{R_{M_n}}) \\
        aU(R) \to (aU(\widehat{R_{M_1}}),\ldots,aU(\widehat{R_{M_n}}))
    \end{align*}
    
    where $\; \; \; \widehat{} \; \; $ denotes the appropriate $M$-adic completion.
    
    \item[(2)] Let $(R_1,m_1),\ldots,(R_n,m_n)$ be local CK domains with finite residue fields such that either (1) each char $R_i=0$ or (2) each $R_i$ is an $F$-algebra where $F$ is a finite field. Then there exists a CK domain $R$ with maximal ideals $M_1,\ldots,M_n$ such that either (1) char $R=0$ or (2) $R$ is an $F$-algebra, each $R_i / m_i \cong R_{M_i} / M_{i_{M_i}}$ and $\widehat{R_i} \cong \widehat{R_{M_i}}$, where $\; \; \widehat{} \; $ denotes the appropriate $M$-adic completion. Thus if each $R_i$ has $l_i$ atoms, $R$ has $l_1 + \cdots + l_n$ atoms and if no $R_i$ is a DVR, $R$ is primefree.    
\end{itemize}
\end{thm}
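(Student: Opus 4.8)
The plan is to treat the two parts separately, since (1) is a structural fact about the ideal theory of semilocal one-dimensional domains while (2) is an existence statement whose substance is supplied by \cite{CGP}. For (1), I would factor the asserted map as a composite of two isomorphisms: first the localization map $L(R)\to L(R_{M_1})\times\cdots\times L(R_{M_n})$, $A\mapsto(AR_{M_1},\dots,AR_{M_n})$, and then the completion maps $L(R_{M_i})\to L(\widehat{R_{M_i}})$, $I\mapsto\widehat{R_{M_i}}I$, which are already in hand: as observed in the paragraph following Theorem~\ref{CKequiv}, for a local CK domain the completion map is a multiplicative lattice isomorphism identifying the ideal and factorization structure up to units. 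A CK domain is a one-dimensional semilocal Noetherian domain whose localizations $R_{M_i}$ are local CK domains, so its nonzero ideals admit primary decomposition into $M_i$-primary components. The localization step is then the standard ``ideals are locally determined'' argument: injectivity follows from $A=\bigcap_i\,(AR_{M_i}\cap R)$ for $A\neq 0$ (with $0$ and $R$ mapping to the bottom and top), and surjectivity from taking, for a tuple $(B_i)$, the ideal $A=\bigcap_i\,(B_i\cap R)$ and checking $AR_{M_j}=B_j$ using that an $M_i$-primary contraction localizes to the unit ideal at $M_j$ for $i\neq j$. Multiplicativity and preservation of meets (intersections) and joins (sums) follow because localization and completion are flat.

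For the group-of-divisibility statement I would identify $G_+(R)$ with the monoid of nonzero principal ideals of $R$ under multiplication, ordered by reverse inclusion (so $aU(R)\le bU(R)$ iff $(b)\subseteq(a)$), and similarly for each $\widehat{R_{M_i}}$. The lattice isomorphism of (1) is order preserving and multiplicative, so it carries this sub-monoid into $\prod_i G_+(\widehat{R_{M_i}})$, and injectivity together with the monoid and order properties is immediate from the ideal-level isomorphism. The step demanding care---and the \emph{main obstacle}---is \emph{surjectivity} onto the full product: from a prescribed principal ideal at each completed local ring one must produce a single principal ideal of $R$ realizing all of them. Here I would first use the completion-level identification of factorization structure to descend each datum to a principal ideal $d_iR_{M_i}$ of $R_{M_i}$, then let $A$ be the unique ideal of $R$ with $AR_{M_i}=d_iR_{M_i}$ furnished by (1). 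This $A$ is finitely generated (Noetherian) and locally principal, hence locally invertible, at every maximal ideal, so $A$ is invertible; and a semilocal domain has trivial Picard group, so $A=(a)$ is principal. Then $aR_{M_i}=d_iR_{M_i}$ realizes the given data. This semilocal-Picard input is exactly what globalizes the local principal generators, and it is the only nonformal ingredient.

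For (2), the existence of a CK domain $R$ with the prescribed completions and residue fields (and the stated characteristic constraints) is precisely \cite[Theorem~1.10]{CGP}, so I would cite it and verify only the two asserted consequences using (1). The positive-cone isomorphism $G_+(R)\cong\prod_i G_+(\widehat{R_{M_i}})$ identifies the atoms of $R$ with the irreducible elements of the product monoid, which are exactly the tuples equal to the identity in every coordinate but one, where they are an atom; hence the atoms of $R$ biject with the disjoint union of the atoms of the $\widehat{R_{M_i}}\cong\widehat{R_i}$, giving $l_1+\cdots+l_n$ nonassociate atoms. Finally, if no $R_i$ is a DVR then $R$ is primefree: a nonzero principal prime $(p)$ of $R$ would localize to a nonzero principal prime of some $R_{M_i}$, forcing the unique nonzero prime (the maximal ideal) of the one-dimensional local domain $R_{M_i}$ to be principal, i.e.\ $R_{M_i}$ a DVR, contrary to hypothesis.
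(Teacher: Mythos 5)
Your treatment of part (1) is correct and follows the same route as the paper: factor the map as the localization map $L(R)\to L(R_{M_1})\times\cdots\times L(R_{M_n})$ (using that nonzero ideals of the one-dimensional semilocal domain $R$ decompose into pairwise comaximal $M_i$-primary components) followed by the completion isomorphisms $L(R_{M_i})\to L(\widehat{R_{M_i}})$, then track principal ideals to get the statement on positive cones. In fact you supply the justification at the one point where the paper is tersest: the paper simply asserts that $A$ is principal if and only if each $A_{M_i}\cap R$ is principal, and your argument (locally principal plus finitely generated gives invertible, and invertible ideals of a semilocal domain are principal) is exactly what makes the surjectivity of $G_+(R)\to\prod_i G_+(\widehat{R_{M_i}})$ work. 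Your derivations of the atom count and of primefreeness also match the paper's intent; the latter needs only the one-line bridge that $R_{M_i}$ is a DVR if and only if $R_i$ is, which follows since $\widehat{R_{M_i}}\cong\widehat{R_i}$ and a Noetherian local domain is a DVR exactly when its completion is.

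The genuine gap is in part (2): the existence statement is \emph{not} ``precisely'' \cite[Theorem 1.10]{CGP}. That theorem is stated for \emph{primefree} local CK domains, i.e., it assumes no $R_i$ is a DVR (in which case finiteness of the residue fields is automatic), whereas the theorem you are proving allows some $R_i$ to be DVRs, provided their residue fields are finite. As written, your proof of (2) therefore covers only the case in which no $R_i$ is a DVR. The paper closes exactly this gap by observing that the \emph{proof} of \cite[Theorem 1.10]{CGP}, as opposed to its statement, never uses primefreeness beyond the finiteness of the residue fields, and so carries through verbatim when DVR coordinates with finite residue fields are allowed. To complete your argument you must either make that same inspection of the CGP construction or handle the DVR coordinates by a separate argument; citing the theorem as stated does not suffice.
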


\begin{proof}
(1) Let $A \neq 0$ be an ideal of $R$. Then $A = (A_{M_1} \cap R) \cap \cdots \cap (A_{M_n} \cap R) = (A_{M_1} \cap R) \cdots (A_{M_n} \cap R)$ where each $A_{M_i} \cap R = R$ or is $M_i$-primary. Moreover, $A$ is principal if and only if each $A_{M_i} \cap R$ is principal. This induces a multiplication lattice homomorphism $L(R) \to L(R_{M_1}) \times \cdots \times L(R_{M_n})$ by $A \mapsto (A_{M_1},\ldots,A_{M_n})$. Also since each nonzero ideal of $R_{M_i}$ (resp., $\widehat{R_{M_i}}$) is $M_{i_{M_i}}$-primary (resp., $\widehat{M_{i_{M_i}}}$-primary), the map $L(R_{M_i}) \to L(\widehat{R_{M_i}})$ given by $A \to \widehat{R_{M_i}} A$ is a multiplicative lattice isomorphism. Clearly both maps preserve principal ideals. There the map $A \to (\widehat{R_{M_1}}A, \ldots,\widehat{R_{M_n}}A)$ is a multiplicative lattice map that preserves principal ideals.

(2) The proof of \cite[Theorem 1.10]{CGP} showed that if $R_1,\ldots,R_n$ are any primefree local CK domains with either (1) each char $R_i=0$ or (2) each $R_i$ is an $F$-algebra where $F$ is a finite field, then there is a primefree CK domain $R$ with (1) either char $R=0$ or (2) $R$ is an $F$-algebra such that $R_i / m_i \cong R_{M_i} / M_{M_i} (\cong R/M_i)$ and $\widehat{R_i} \cong \widehat{R_{M_i}}$. The condition that the $R_i$ were primefree gives that each residue field $R_i / m_i$ is finite. The same proof carries through if we allow $R_i$ to be a DVR as long as $R_i / m_i$ is finite.
\end{proof}

\bibliographystyle{elsarticle-harv}
\bibliography{Paper.bib}

\end{document}